\newcommand{\normal}{\color{black}}
\def\namedlabel#1#2{\begingroup
    #2%
    \def\@currentlabel{#2}%
    \label{#1}\endgroup
}
\theoremstyle{plain}
\newtheorem{theorem}{Theorem}[section]
\newtheorem{corollary}[theorem]{Corollary}
\newtheorem{lemma}[theorem]{Lemma}
\newtheorem{proposition}[theorem]{Proposition}
\theoremstyle{definition}
\newtheorem{remark}[theorem]{Remark}
\numberwithin{equation}{section}
\renewcommand\labelenumi{\textup{\alph{enumi})}}
\renewcommand\theenumi\labelenumi
\makeatletter\renewcommand{\p@enumii}{}\makeatother 
\renewcommand{\leq}{\leqslant}
\renewcommand{\geq}{\geqslant}
\newcommand{\bbZ}{\mathbb{Z}}
\newcommand{\bbP}{\mathbb{P}}
\newcommand{\calU}{\mathcal{U}}
\newcommand{\calW}{\mathcal{W}}
\newcommand{\R}{\mathds{R}}
\newcommand{\N}{\mathds{N}}
\newcommand{\Pp}{\mathds{P}}
\newcommand{\Z}{\mathds{Z}}
\newcommand{\pr}{\mathds{P}}
\newcommand{\ex}{\mathds{E}}
\title[Harmonic functions for discrete Feynman--Kac operators]{Decay of harmonic functions for discrete time Feynman--Kac operators with confining potentials}
	\author{Wojciech Cygan}
		\address{
		Wojciech Cygan,
		Institut f\"{u}r Mathematische Stochastik,
		Technische Universit\"{a}t \newline Dresden, Germany 
		\& 
		Instytut Matematyczny,
		Uniwersytet Wrocławski, Poland
		}
		\email{wojciech.cygan@uwr.edu.pl}
	\author{Kamil Kaleta}
		\thanks{Research supported by National Science Centre, Poland, grant no.\ 2019/35/B/ST1/02421}
		\address{Kamil Kaleta, Faculty of Pure and Applied Mathematics,
						Wrocław University of Science and Technology, Poland}
		\email{kamil.kaleta@pwr.edu.pl}
		\author{Mateusz \'{S}liwi\'{n}ski}
		\address{Mateusz \'{S}liwi\'{n}ski, Faculty of Pure and Applied Mathematics,
						Wrocław University of Science and Technology, Poland}
		\email{mateusz.sliwinski@pwr.edu.pl}
	\date{}
\begin{document}

\begin{abstract}
We propose and study a certain discrete time counterpart of the classical  Feynman--Kac semigroup with a confining potential in countable infinite spaces. For a class of long range Markov chains which satisfy the direct step property	we prove sharp estimates for functions which are (sub-, super-)harmonic in infinite sets with respect to the discrete Feynman--Kac operators. These results are compared with respective estimates for the case of a nearest-neighbour random walk which evolves on a graph of finite geometry. We also discuss applications to the decay rates of solutions to equations involving graph Laplacians and to eigenfunctions of the discrete Feynman--Kac operators. We include such examples as non-local discrete Schr\"odinger operators based on fractional powers of the nearest-neighbour Laplacians and related quasi-relativistic operators.  Finally, we analyse various classes of Markov chains which enjoy the direct step property and illustrate the obtained results by examples.
\end{abstract}

\subjclass[2010]{ 
60J10, 
47D08, 
31C05, 
60J75, 
05C81, 
39A70, 
35P05, 
81Q10
}

\keywords{Feynman-Kac formula, Schr\"odinger semigroup, direct step property, Markov chain, weighted graph, ground state, eigenfunction}

\maketitle

\section{Introduction} 

The main goal of this paper is to develop the theory of discrete time Feynman--Kac semigroups with general confining potentials which we define for Markov chains with values in infinite discrete spaces. We focus on chains which exhibit a certain long range distributional   property -- the  \emph{direct step property}  (DSP in short). It   means that the two-step transition probability   of the chain   is dominated (up to a multiplicative constant) by the one-step transition  probability.
 The first part of the paper   is concerned with   the decay properties of functions that are harmonic (resp.\ subharmonic, superharmonic) in an infinite subset of the space with respect to the Feynman--Kac operator related to a Markov chain satisfying the DSP.
We then compare our results with the case of nearest-neighbor random walks evolving in graphs of finite geometry. In the second part  we focus on the DSP property itself. We discuss various methods which allow us to construct Markov chains with the DSP,   putting special emphasis on   the technique of the \emph{discrete subordination}. We illustrate this by numerous examples, showing that in fact the DSP class includes many important chains that were already studied in the literature. Results obtained here are fundamental for our ongoing project where we analyse further analytic properties of the discrete time Feynman--Kac semigroups. 

\subsection*{Motivation}
Our motivations for this project are two-fold. The first   one originates   from the theory of non-local Schr\"odinger operators in $L^2(\R^d)$, while the second comes from the theory of discrete-time Markov chains evolving in countable infinite spaces. We now briefly describe each of these two paths and we   point out   some connections between them.

Let $L$ be an $L^2$-generator of a symmetric L\'evy process $(X_t)_{t \geq 0}$ in $\R^d$ (\emph{L\'evy operator}) \cite{ Bottcher_Schilling_Wang, Jacob} and let $V:\R^d \to \R$ be a locally bounded function such that $V(x) \to \infty$ as $|x| \to \infty$ (\emph{confining potential}). 
The \emph{Schr\"odinger operator} is then defined as
$$
H = -L+V, \quad \text{acting in} \ L^2(\R^d).
$$ 
Prominent examples include the following operators $L$ (and the corresponding stochastic processes):

\noindent 
a) \emph{classical Laplacian}: $L=\Delta$ (Brownian motion running at twice the speed); 

\noindent 
b) \emph{fractional Laplacian}: $L=-(-\Delta)^{\alpha/2}$, $\alpha \in (0,2)$ (isotropic $\alpha$-stable L\'evy process); 

\noindent
c) \emph{quasi-relativistic operator}: $L=-(-\Delta+m^{2/\alpha})^{\alpha/2}+m$, $\alpha \in (0,2)$, $m>0$ (isotropic relativistic $\alpha$-stable L\'evy process). 

\noindent
We recall that these and many other examples of $L$'s can be constructed via the spectral theory, that is the operator $L$ can be written as $L = -\phi(-\Delta)$, where $\phi$ is a Bernstein function such that $\phi(+0)=0$, cf. \cite{Schilling_book}. 
Note that with this approach we can obtain local as well as non-local operators. For instance, $L$ in a) is local as it is a second order differential operator, while both $L$'s in b) and c) are non-local -- this is because of the jumping nature of the corresponding L\'evy processes.
It is remarkable that such operators   and the related processes have numerous applications in physical sciences \cite{Woyczynski}; in view of the confinement assumption the corresponding Schr\"odinger operators $H$ usually serve as Hamiltonians in various mathematical models of \emph{oscillators} in non-relativistic and (quasi-)relativistic quantum mechanics (see, e.g. \cite{Durugo_Lorinczi, Garbaczewski_Stephanovich, Gatland, osc2, Mohazzabi} and references in these papers). 

The Schr\"odinger semigroup $\big\{e^{-tH}: t \geq 0\big\}$ admits the following stochastic representation
which is given in terms of the L\'{e}vy process generated by $L$
\begin{equation}\label{eq:F-K-formula}
e^{-tH} f(x) = \ex^x\left[e^{-\int_0^t V(X_s)ds} f(X_t)\right], \quad f \in L^2(\R^d),\quad t>0.
\end{equation}
This equality is called the \emph{Feynman--Kac formula} \cite{Demuth_Casteren}. It is known to be a powerful tool which allows one to study various  analytic properties of these operators by means of probabilistic methods.
Recent contributions include estimates of the heat kernel, heat content and trace \cite{Acuna, Acuna_Banuelos, Banuelos_Yolcu, Jakubowski_Wang, Kaleta_Schilling, Wang}, harmonic functions, ground states, eigenfunctions and eigenvalues, and spectral bounds \cite{Jacob_Wang, Kaleta, Kaleta_Lorinczi_3, Kulczycki, Takeda}, intrinsic hyper- and ultracontractivity \cite{Chen_Wang, Kaleta_Lorinczi_Kwasnicki, Kulczycki_Siudeja}, to mention just a few of them.  

In this paper we study semigroups which are given by a pure discrete counterpart of the right-hand side of \eqref{eq:F-K-formula}, i.e.\ when the underlying processes are discrete-time Markov chains taking values in a countable infinite state space $X$. Generators of such semigroups are certain normalizations of discrete Schr\"odinger operators  (they act on function spaces over $X$) and they are defined through the generators of the Markov chains -- this can be understood as a discrete-time Feynman--Kac formula. This correspondence provides direct access to various properties of objects related to non-local discrete Schr\"odinger operators  which are exploited via elementary methods based on discrete time evolution semigroups and processes.   In this paper we apply this approach to study the decay properties of harmonic functions, but in fact it has more far-reaching consequences.   Our investigations concentrate on the class of Markov chains with the DSP.   This framework covers many interesting examples of discrete counterparts of non-local Schr\"odinger operators that were studied in the Euclidean case. 

One can also look at our investigations from a different perspective.  
The discrete-time Feynman--Kac semigroups with confining potentials serve as transition semigroups of discrete-time Markov chains evolving in countable infinite spaces, whose paths are killed with random intensity given by the potential. This    killing effect intensifies at infinity, leading to a variety of interesting long-range and limiting phenomena, especially for the underlying discrete-time processes that satisfy the DSP.
One of the main goals of this project is to understand the long-time asymptotic and ergodic properties of such Feynman--Kac semigroups and the corresponding processes evolving in the presence of the killing Schr\"odinger potentials. In this context, we want to mention here a recent work by Diaconis, Houston-Edwards and Saloff-Coste \cite{Diaconis} which gave us some new insight and motivation.  
The two aforementioned motivations are strongly connected to each other -- this is manifested via the probabilistic background lying behind  the analytic approach which we undertake in this article.

Below we briefly display the setting and our main results, together with the references to the corresponding theorems in the remaining part of the text.

\subsection*{Discrete time Feynman--Kac semigroups}
Let $X$ be a countable infinite set and let $P : X \times X \to [0,1]$ be a (sub-)probability kernel, that is
\begin{align} \label{def:prob_kernel}
    \sum_{y \in X} P(x,y) \leq 1, \ \ \text{for every} \ x \in X. 
\end{align}
Equivalently, there is a time-homogeneous Markov chain $\big\{Y_n: n \in \N_0\big\}$, defined on a given probability space $(\Omega, \mathcal F, \pr)$, with values in $X$ and one-step transition probabilities given by
$$
\pr(Y_{n+1}=y \, \mid \, Y_n=x) = P(x,y).
$$
Throughout we use the standard notation for the measure of the process starting at $x \in X$, that is  $\pr^x(Y_n =y) := \pr(Y_n=y \, \mid \, Y_0=x).$ 
The corresponding expected value is denoted by $\ex^x$. When the sum in \eqref{def:prob_kernel} is equal to $1$ for every $x \in X$, the process $\big\{Y_n: n \in \N_0\big\}$ is conservative in the sense that it has a full probability measure $\pr^x$ for every $x \in X$. Otherwise, it can be interpreted as a \emph{killed} process. We can complete its law to a full probability measure by the standard procedure which is based on adding an extra \emph{cemetary} point $\partial$ to the state space $X$ and extending $P$ to $X_{\partial} \times X_{\partial}$, where $X_{\partial} = X \cup {\partial}$. In this paper, however, we do not follow this  path -- we  allow the kernel $P$ to be strictly sub-probabilistic. Let us remark that we do  not assume symmetry of $P$.

Let $V:X \to \R$ be a function such that $\inf_{x \in X} V(x) > 0$ and let us introduce a semigroup of operators $\{\calU_n: n \in \N_0\}$ defined as 
\begin{align} \label{eq:FK-discrete}
\calU_0 f = f ,\quad \calU_nf(x) = \ex^x \bigg[\prod_{k=0}^{n-1}\frac{1}{V(Y_k)}  f(Y_n) \bigg], \quad n\geq 1,
\end{align}
for any admissible function $f$. Observe that
$
\calU_n f = \calU^{n} f, n\geq 1,
$
where $\calU^{n}$ denotes the $n$\textsuperscript{th} power of the operator
\begin{align}\label{def:calU}
\calU f(x) = \frac{1}{V(x)} \sum_{y \in X} P(x,y)f(y), \quad x \in X.
\end{align}
The formula \eqref{eq:FK-discrete} can be seen as a discrete time and space counterpart of \eqref{eq:F-K-formula}. We therefore call $\{\calU_n: n \in \N_0\}$ the \emph{discrete time Feynman--Kac semigroup} with potential $V$ associated with the chain $\{Y_n: n \in \N_0\}$. Observe that the discrete time multiplicative functional under the expectation in \eqref{eq:FK-discrete} could also be alternatively defined as $\prod_{k=1}^{n}\frac{1}{V(Y_k)} $ which would lead to a different semigroup. This results in the duality structure and this issue is discussed in more detail in Section \ref{sec:adjoint_F-K}.

The study of multiplicative functionals such as appearing in \eqref{eq:FK-discrete}, for processes with discrete time parameter, has a long history. This is mainly related to the famous observation by Mark Kac that various Wiener functionals can be effectively approximated by their certain discretizations \cite{Kac_TAMS_49, Kac_Berkley_Symposium}. Such techniques turned out to be powerful tools in the study of boundary value problems for classical Schr\"odinger operators on bounded domains of $\R^d$ for which the solutions are given by the classical Feynman--Kac formula. 
Similar questions have been raised for the simple random walk evolving in $\Z^d$, equipped with its natural Cayley graph structure, by Cs\`aki \cite{Csaki} for the one-dimensional case, and by Anastassiou and Bendikov in \cite{Anastassiou_Bendikov} for the multidimensional (parabolic) case. 

The operator $\calU -I$ is the central object in the present paper. We call it the \emph{Feynman--Kac operator}.
It can be directly checked that if $f$ is a function on $X$ such that $\calU_n |f| (x) < \infty$, for any $x \in X$ and $n \in \N_0$ (e.g.\ if $f$ is bounded), then
$
u(n,x) = \calU_n f(x)
$
is the unique solution to the following Cauchy problem
$$
\begin{cases}
\partial_n u(n,x) = (\calU -I)_x u(n,x) \\
u(0,x) =  f(x), 
\end{cases}
$$
where $\partial_n u(n,x) = u(n+1,x) - u(n,x)$ is the first-difference operator.

An important link to the classical theory is the observation that the operators $\calU -I$ can be seen as certain normalizations of the discrete Schr\"odinger operators. More precisely, if
$$
H f(x) = \sum_{y \in X} P(x,y)\big(f(x)-f(y)\big) + V(x)f(x),
$$
where $V:X \to \R$ is a \emph{potential} such that $\inf_{x \in X} ( V(x) +\sum_{y \in X} P(x,y)) > 0$, then
$$
\frac{1}{V(x) + \sum_{y \in X} P(x,y)} H f(x) = (I- \calU) f(x),
$$
where the operator $\calU$ is defined with the shifted potential $V(x)+\sum_{y \in X} P(x,y)$. It is therefore evident that the operators $H$ and $I-\calU$ share many analytic properties. In the present paper we exploit the fact that they have joint harmonic functions,  see Section \ref{sec:graph_L}. This idea has been used very recently by Fischer and Keller in the study of the Riesz decomposition for superharmonic functions of graph Laplacians \cite{Fischer_Keller}. 

\subsection*{Results for Markov chains with the DSP and confining potentials}

We obtain results for a class of Markov chains with a certain long range distributional property. Recall that the probability to move from $x$ to $y$ in $n$ steps is inductively defined as 
$$
P_n(x,y) = \sum_{z \in X} P(x,z)P_{n-1}(z,y), \quad n>1.
$$
We assume that the kernel $P(x,y)$ satisfies the following regularity condition:
	\begin{itemize}
\item[\bfseries(\namedlabel{A}{A})] 
We have $P(x,y) >0$, for all $x,y \in X$, and there exists a constant $C_*>0$ such that
\begin{align}\label{eq:DSP}
 P_2(x,y) \leq C_* P(x,y), \quad x, y \in X.
\end{align} 
\end{itemize}

\noindent
Condition \eqref{eq:DSP} has an interesting heuristic interpretation: \emph{the probability to move from $x$ to $y$ in two consecutive steps is asymptotically smaller than the probability to move in the one direct step}. For this reason we call this condition the \emph{direct step property} (DSP in short). It should be emphasized that the rate of domination in the DSP does not depend on $x$ and $y$ (recall that $X$ is infinite). Clearly, this property extends to the $n$-step transition probability, that is $P_n(x,y) \leq C_*^{n-1} P(x,y)$, $x,y \in X$. Observe that under \eqref{eq:DSP} the positivity of the kernel $P$ in assumption \eqref{A} is in fact equivalent to a weaker condition that the Markov chain associated with $P$ is irreducible, i.e.\ for every $x,y \in X$ there exists $n \in \N$ such that $P_n(x,y)>0$. We remark in passing that the DSP can be seen as a discrete counterpart of the direct jump property (DJP) -- the condition on a L\'evy measure which is a useful tool in the study of jump L\'evy processes in $\R^d$ (see \cite{Kaleta_Schilling} and references therein). The condition of this type has been first proposed by Kl\"uppelberg \cite{Kluppelberg} for distributions on the half-line.

In the paper we consider the class of \textit{confining} potentials $V$, that is  satisfying the following condition

	\begin{itemize}
\item[\bfseries(\namedlabel{B}{B})] 
For every $M>0$ there exists a finite set $B_M \subset X$ such that $V(x) \geq M$ for $x \in B_M^c$.
\end{itemize} 

\medskip
\noindent
An admissible function $f$ is called $(\calU-I)$-harmonic in a set $D\subset X$ ($(\calU-I)$-subharmonic, $(\calU-I)$-superharmonic, respectively) if $(\calU-I)f(x)=0$ for $x\in D$ ($\geq 0$, $\leq 0$, resp.).
The estimates for harmonic functions which we prove in the DSP case in Section \ref{sec:est_dsprw} can be summarized as follows.

\medskip
\noindent
(1) \textsl{Upper bound for subharmonic functions}: Under the DSP, assumption \eqref{B} forces that there exists a finite set $B_0 \subset X$ such that for any finite set $B \subset X$ with $B \supseteq B_0$ and for any non-negative and bounded function $f$ which is $(\calU -I)$-subharmonic in $B^c$ we have
$$ 
f(x) \leq C \, \frac{P(x,x_0)}{V(x)}  \sum_{y \in B} f(y), \quad x \in B^c, \ x_0 \in B, 
$$
with a constant $C=C(P,B)$ which is independent of $V$ and $f$, see Theorem \ref{thm:main_1}. The proof of this result is transparent and quite elementary. It is based on a tricky self-improving estimate which combines the DSP with assumption \eqref{B}.
We remark that in many cases the set $B_0$ and the constant $C$ can be given explicitly. 

The matching lower bound for superharmonic functions is obtained in a slightly more general setting and it also indicates that the upper bound in (1) is sharp.

\medskip
\noindent
(2) \textsl{Lower bound for superharmonic functions}: Under assumption \eqref{A}, for any set $D \subset X$ and any nonnegative function $f$ which is $(\calU -I)$-superharmonic in $D$, we have for any finite set $B\subset X$, 
$$
f(x) \geq \widetilde C \frac{P(x,x_0)}{V(x)} \sum_{y \in B} f(y), \quad x \in D \cap B^c,\ x_0 \in B,
$$
with a constant $\widetilde C= \widetilde C(P,B)$ which is independent of $V$, $f$ and $D$, see Proposition \ref{prop:lower_bound}. 
In this case the finite set $B$ is arbitrary. Similarly as for the upper bound, in many cases the value of the constant $\widetilde C$ can be given explicitly. 

A combination of our results from Theorem \ref{thm:main_1} and Proposition \ref{prop:lower_bound} (presented in (1)--(2) above) gives the two-sided sharp estimates for harmonic functions.

\medskip
\noindent
(3) \textsl{Two-sided estimate for harmonic functions}: Under assumptions \eqref{A} and \eqref{B}, there exists a finite set $B_0 \subset X$ such that for any finite set $B \subset X$ with $B \supseteq B_0$, any set $D \subset X$ and any non-negative non-zero and bounded function $f$ which is $(\calU -I)$-harmonic in $ D \cap B^c$ and such that $f(x) = 0$ for $x \in D^c \cap B^c$ we have
\begin{align}\label{eq:uBHI}
\widetilde C \leq \frac{f(x)}{ \frac{P(x,x_0)}{V(x)} \sum_{y \in B} f(y)} \leq C, \quad x \in D \cap B^c, \  x_0 \in B.
\end{align}
This can be seen as a discrete counterpart of the estimates proved in \cite[Theorem 2.2]{Kaleta_Lorinczi_1} in the case of L\'evy processes. 
As a direct consequence of \eqref{eq:uBHI} we obtain  \normal the following result.

\medskip
\noindent
(4) \textit{Uniform Boundary Harnack Inequality at infinity}: if $f$ and $g$ are two nonzero $(\calU -I)$-harmonic functions as in (3), then 
\begin{align}\label{ineq:BHI}
\left(\frac{\widetilde C}{C}\right)^2 \leq \frac{f(x)g(y)}{g(x)f(y)} \leq  \left(\frac{C}{\widetilde C}\right)^2 ,  \quad x, y \in D \cap B^c.
\end{align}
Result from (3) and (4) are given in Corollary \ref{cor:BHI_infty}.  
The inequality in \eqref{ineq:BHI} is a discrete version of the uniform Boundary Harnack Inequality (uBHI) at infinity which is a fundamental theorem in the potential theory of continuous time Markov processes and their generators. The word ``uniform'' refers to the fact that the constants appearing in the estimates do not depend on $D$ and $V$, and the finite set $B_0$ depends on $V$ only through its rate of growth at infinity (this means that if $B_0$ is appropriate for a given $V$, then it is also appropriate for any $\widetilde V$ such that $\widetilde V \geq V$). BHI has been widely studied for both local and non-local operators and the corresponding processes on bounded domains of $\R^d$. We refer the reader to the paper by Bogdan, Kumagai and Kwa\'{s}nicki \cite{Bogdan_Kumagai_Kwasnicki} for  general results on jump Feller processes, an excellent overview of the history, references and discussion on applications of BHI. Our present estimate \eqref{eq:uBHI} can be understood as a discrete time and space variant of the inequality stated by Kwa\'{s}nicki for jump isotropic $\alpha$-stable processes in $\R^d$ and $V \equiv 0$ \cite[Corollary 3]{Kwasnicki}. It was derived from the general result proven by Bogdan, Kulczycki and Kwa\'{s}nicki in \cite{Bogdan_Kulczycki_Kwasnicki}. Recently, Kim, Song and Vondra\v{c}ek \cite{Kim_Song_Vondracek} obtained a version of BHI at infinity for jump Feller processes on metric measure spaces.

We remark that all of our results presented in (1)--(4) can be extended beyond the setting of (sub-)probability kernels (for details see Remark \ref{rem:ext}).

\subsection*{Related estimates for nearest-neighbor walks with confining potentials}

It is instructive to compare our results obtained for Markov chains with the DSP with corresponding estimates for nearest-neighbor random walks evolving in connected graphs of finite geometry. The necessary set-up is precisely described at the beginning of Section \ref{sec:est_nnrw}.

Asymptotic properties of long-range random walks usually differ substantially from those of nearest-neighbor walks. 
This is also the case in the present situation -- under the killing effect (coming from the confining potential) on the paths of the underlying stochastic process the discrepancy between the decay rates is particularly evident. 
As we could not find the result of this type in the literature, we provide the respective estimates under the assumption that the potential is isotropic and increasing with respect to the \emph{graph} (\emph{geodesic}) \emph{metric} $d$ in $X$ which we equip with the graph structure. Our results can be summarized as follows. 
 
\medskip
\noindent
(1) \textsl{Upper bound for subharmonic functions.}  We obtain an upper estimate for bounded non-negative functions that are $(\calU -I)$-subharmonic in the complement of a geodesic ball. The decay rate of such a function $f(x)$ is governed by the expression of the form $\prod_{i=0}^{d(x,x_0)} (1/V(x_i)$ which is evaluated along the shortest path $x_0 \rightarrow x_1 \rightarrow \dots \rightarrow x$ connecting a given fixed point $x_0$ with $x$ in the graph over $X$, see Theorem \ref{th:upper_subharm_nnrw}.

\medskip
\noindent
(2) \textsl{Lower estimate for superharmonic functions and related two-sided bound.} In Theorem \ref{th:lower_est_nnrw} we obtain a lower bound for non-negative functions which are $(\calU -I)$-superharmonic in an unbounded, connected and geodesic convex subset of $X$. This estimate differs from that  in (1) by an extra multiplicative constant under the product. 
\medskip

In the case when the potential $V$ grows regularly enough at infinity  then the decay of a bounded and nonnegative $(\calU -I)$-harmonic function is governed  by the expression  of the form
\begin{align} \label{eq:two-sided-nnrw} 
e^{-c \, d(x,x_0) \log V(x) (1+o(1))}, \quad \text{as} \ \ d(x,x_0) \to \infty.
\end{align}
For a precise statement see Corollary \ref{cor:nnrw_strongest}. 
Observe that the results obtained for Markov chains with the DSP are in sharp  contrast to the decay rate obtained in \eqref{eq:two-sided-nnrw}. We refer the reader to Section \ref{sec:expl_rates} for some explicit examples. 

\subsection*{Direct applications}
We now discuss two specific applications of the presented estimates. 

\medskip
\noindent
(1) \textsl{Applications to equations involving the graph Laplacians.} 
Our results can be effectively applied to study the decay properties of solutions to the equation $H f(x) = 0$, $x \in D$, where $H$ is the graph Laplacian in the graph over $X$ and the set $D \subset X$ is infinite. More precisely, 
if $\{b(x,y)\}_{x,y\in X}$ is a family of weights over edges in $X$, as explained in Section \ref{sec:graph_L}, and  if $m$ is a positive measure on $X$
and $V : X \to \R$ is a potential satisfying assumption \eqref{B} then the operator defined as
$$
H f (x) = \frac{1}{m(x)} \sum_{y \in X} b(x,y) \big(f(x)-f(y)\big) + V(x) f(x),
$$
for $f$ such that $  \sum_{y} b(x,y) |f(y)| < \infty$ for every $x \in X$, is called the \emph{graph Laplacian}. Such operators can be seen as discrete anologues of Schr\"odinger operators with confining potentials. For an excellent account of the theory and an overview of recent contributions in the area of operators on infinite graphs we refer the reader to the monograph by Keller, Lenz and Wojciechowski \cite{keller_lenz_wojciechowski_2021}. 

The action of $H$ on functions that are supported in the complement of a bounded set $A$ can be reduced to a certain normalization of the operator $I - \calU$, which is constructed via the sub-probability kernel
$P(x,y)= b(x,y)/\sup_{x \in X} \sum_{y} b(x,y)$ and the potential $\widetilde V$ determined by the initial data $V$, $b$ and $m$. In consequence, these two operators share functions which are harmonic in subsets of $A$ (see Proposition \ref{prop:H_and_A} and the discussion following it). Therefore, under some mild assumptions on $b$ and $m$, our results can be applied to obtain estimates for functions harmonic with respect to $H$ in infinite subsets of $X$ in two cases: for weights $b(x,y)$ which lead to the DSP probability kernels $P(x,y)$ (Corollary \ref{thm:main_H}) and for $b(x,y)$ of the nearest-neighbor type (Corollary \ref{thm:main_H_nnrw}). Moreover, in the first case the BHI at infinity holds. This seems to be of special interests in the $\ell^2$-setting, since the operator $H$ has a specific meaning in quantum physics.  For detailed statements and further discussion we refer to Section \ref{sec:graph_L}.

\medskip
\noindent
(2) \textsl{Estimates for eigenfunctions of discrete time Feynman--Kac semigroups}. Suppose there is a positive measure $\mu$ on $X$. Under some natural assumptions on the kernel $P(x,y)$, the discrete time Feynman--Kac semigroup $\{\calU_n: n \in \N_0\}$ consists of operators that are bounded in $\ell^2(X,\mu)$ and act as bounded operators from $\ell^2(X,\mu)$ to $\ell^{\infty}(X,\mu)$ (it means they are ultracontractive). Moreover, under condition \eqref{B}, the operators $\calU_n$ are compact in $\ell^2(X,\mu)$ (Lemma \ref{lem:cpt}). In particular, they have purely discrete spectra and the ground state of the operator $I-\calU$ exists. As we already know that (due to ultracontractivity) any $\ell^2$-eigenfunction is bounded, our pointwise estimates from Section \ref{sec:gen_est} apply directly. This is discussed in more detail in Section \ref{sec:est_ef_FK}. 

As a  valuable gain we obtain sharp two-sided estimates for the ground state eigenfunction outside of a finite subset of $X$. This result has many far-reaching consequences. In particular, it is fundamental for further developments in the theory of discrete time Feynman--Kac semigroups with confining potentials. In our ongoing work we apply it to find sharp two-sided estimates for the kernel of the operator $\calU_n$ and to characterize the intrinsic contractivity properties. 
These results can in turn be used to analyse further long-time properties of the corresponding semigroups. 
It is rather a general rule that a sufficiently  detailed knowledge of the ground state enables us to give a precise description of the large-time behaviour of the corresponding semigroup, see e.g.\ Kaleta and Schilling \cite{Kaleta_Schilling} for a recent development in the theory of Schr\"odinger semigroups for L\'evy operators and Diaconis, Houston-Edwards and Saloff-Coste \cite[Sections 7.1-7.2]{Diaconis} for recent results in the case of discrete-time Markov chains with finite state spaces (see also Remark 7.19 and Examples 7.20--7.22 in \cite{Diaconis})

\subsection*{Markov chains with the DSP and discrete subordination}
We are finally concerned with the question: is the class of Markov chains with the DSP rich enough? This is partially answered in Section \ref{sec:DSP_chains}. We show that if we equip the space $X$ with a metric $d$ and if the sub-probability kernel $P$ depends on the distance and it is comparable with a doubling function $J$, that is $P(x,y)\asymp J(d(x,y)) $, then such kernel $P$ satisfies the DSP, see Proposition \ref{prop:doubling}. This condition includes many important examples of long-range random walks, for instance stable-like random walks in the integer lattice, see e.g.\ Bass and Levin \cite{Bass_Levin}, as well as random walks in measure metric spaces studied recently by Murugan and Saloff-Coste in \cite{Murugan_Saloff_2} and \cite{Murugan_Saloff-Coste}. In Corollary \ref{cor:JandK} we also extend this observation to kernels with much lighter tails.

On the other hand, we  establish a useful result which states that the DSP is stable under random change of time. To be more precise, if we consider an increasing random walk $\{\tau_n : n\in \N_0\}$ with values in $\N_0$ which satisfies the DSP and if $\{Z_n : n\in \N_0\}$ is an independent of $\tau_n$ homogeneous Markov chain in $X$ then the time-changed Markov chain $\{ Z_{\tau_n} : n\in\N_0\}$ enjoys the DSP as well, see Lemma \ref{lem:subord}. This is a powerful method which allows one to construct a number of examples of Markov chains satisfying our assumption \eqref{A}, see Corollary \ref{cor:sufficient} (in Lemma \ref{lem:suff_DSP_N} we also give an easy-to-check sufficient condition for the walk $\{\tau_n : n\in \N_0\}$ to satisfy the DSP). We exploit this construction with the aid of the discrete subordination which was developed by Bendikov and Saloff-Coste in \cite{Bendikov-Saloff} for random walks on groups.  As admissible random time-change processes they admitted a specific class of random walks $\tau_n$ whose one-step distributions are uniquely determined (through their Laplace transforms) by a Bernstein function $\phi $ such that $\phi (0+)=0$ and $\phi (1)=1$. If $\{Z_n : n\in \N_0\}$ is the standard nearest neighbour walk in $X$ equipped with a graph structure then the generator of the subordinate Markov chain $\{ Z_{\tau_n} : n\in\N_0\}$ is of the form $-\phi(-\Delta)$, where $\Delta$ stands for the classical discrete Laplacian. This enables us to study various important non-local discrete counterparts of operators known from the $L^2(\R^d)$-theory, such as \emph{fractional} Laplacians and \emph{quasi-relativistic} operators.

In particular, we investigate the class of Markov chains associated with Bernstein functions $\phi (\lambda) = \lambda^\alpha$, for $\alpha \in (0,1)$ -- it results in an $\alpha$-\textit{stable} subordinator, and $\phi (\lambda) = (\lambda +m^{1/\alpha})^\alpha -m$, for $\alpha \in (0,1)$ and $m\geq 0$ -- this gives a \textit{relativistic} $\alpha$-stable subordinator (see Propositions \ref{prop:rel_stab} -- \ref{prop:UHK_relativistic}). These specific examples of long range Markov chains 
may be of special interest in mathematical physics and modelling. 

Finally, we discuss a handy method of constructing Markov chains with the DSP on product spaces, including integer lattices and products of more general graphs, see Section \ref{sec:product}.  

To illustrate our estimates of harmonic functions for discrete Feynman--Kac operators we collect in Section \ref{sec:expl_rates} some explicit examples of the decay rates which are derived for various Markov chains and confining potentials

\section{Estimates for functions harmonic in infinite sets} \label{sec:gen_est}

In this section we present estimates for functions which are subharmonic and superharmonic with respect to the discrete Feynman-Kac operators. We also study the decay of functions that are harmonic outside of a finite set. 

Recall that a function $f$ is called $(\calU -I)$-harmonic ($(\calU -I)$-superharmonic, $(\calU -I)$-subharmonic, resp.) in a non-empty set $D\subset X$ if $(\calU -I) f (x) = 0$ ($\leq 0$, $\geq 0$, resp.) for $x\in D$.

\subsection{Estimates for Markov chains with the DSP}\label{sec:est_dsprw}
In this section we find estimates of harmonic functions for the class of Markov chains satisfying our assumption \eqref{A}: $P(x,y)> 0$ for all $x,y\in X$ and there is a constant $C_{\ast}>0$ such that
\begin{align*}
 P_2(x,y)\leq C_{\ast}P(x,y),\quad x,y\in X.
\end{align*}

We first show that the kernel $P(x,y)$ can be uniformly localized in the second variable. For every finite set $B \subset X$ we define
\begin{align*}
\underline{K}_B:= \inf \left\{\frac{P(x,y)}{P(x,z)}: x \in X; y, z \in B\right\}
\quad \mathrm{and} \quad 
\overline{K}_B := \sup \left\{\frac{P(x,y)}{P(x,z)}: x \in X; y, z \in B\right\}.
\end{align*}

\begin{lemma} \label{lem:comparable}
Under assumption \eqref{A}, for every finite set $B \subset X$ we have $0 < \underline{K}_B \leq \overline{K}_B < \infty$.
\end{lemma}

\begin{proof}
It follows from \eqref{A} that for every $x \in X$ and $y, z \in B$ we have
$$
0 < P(x,z)P(z,y) \leq \sum_{w \in X} P(x,w)P(w,y) \leq C_{\ast} P(x,y),
$$
$$
0 < P(x,y)P(y,z) \leq \sum_{w \in X} P(x,w)P(w,z) \leq C_{\ast} P(x,z).
$$
This immediately implies
$$
\underline{K}_B \geq \frac{\inf_{y, z \in B}P(z,y)}{C_{\ast}} > 0 \quad \text{and} \quad \overline{K}_B \leq \frac{C_{\ast}}{\inf_{y, z \in B}P(y,z)}< \infty
$$
which completes the proof.
\end{proof}

In the remaining part of this section, we fix a finite set $B_0 \subset X$ such that
\begin{align} \label{choice_r_0}
C_1:=\sup \left\{\frac{1}{V(x)}: x \in B_0^c \right\}  < 1 \wedge \frac{1}{C_{\ast}}.
\end{align} 
The existence of such a set is secured by assumption \eqref{B}. Note that $B_0$ depends on $V$ and $P$.

Our first main result is the following upper bound for functions that are $(\calU -I)$-subharmonic in infinite sets.

\begin{theorem}\label{thm:main_1} 
Under assumptions \eqref{A} and \eqref{B}, there exists a constant $C_2>0$ such that for any finite set $B \subset X$ with $B \supseteq B_0$, and for any non-negative bounded function $f$ which is subharmonic in $B^c$
we have
\begin{align} \label{upper_bound_new}
f(x) \leq  C_2 \, \frac{1}{V(x)} \sum_{y \in B} P(x,y)f(y), \quad x \in B^c.
\end{align}
In particular,
$$
f(x) \leq C_2 \overline{K}_B  \frac{P(x,x_0)}{V(x)} \sum_{y \in B} f(y), \quad x \in B^c, \ x_0 \in B.
$$
\end{theorem}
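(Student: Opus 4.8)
The plan is to derive \eqref{upper_bound_new} from subharmonicity via a self-improving (bootstrap) argument. Fix a finite set $B\supseteq B_0$ and a non-negative bounded $f$ that is $(\calU-I)$-subharmonic in $B^c$, so that $f(x)\le \calU f(x)=\frac{1}{V(x)}\sum_{y\in X}P(x,y)f(y)$ for every $x\in B^c$. Split the sum at $B$ and write $f(x)\le g(x)+h(x)$ on $B^c$, where
\[
g(x):=\frac{1}{V(x)}\sum_{y\in B}P(x,y)f(y),\qquad h(x):=\frac{1}{V(x)}\sum_{y\in B^c}P(x,y)f(y).
\]
Here $g$ is exactly the quantity we want on the right-hand side, and $h$ is the ``remainder'' coming from the infinite part of the state space. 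Since $g(x)+h(x)=\calU f(x)\le \frac{\|f\|_\infty}{V(x)}\le C_1\|f\|_\infty$ for $x\in B^c\subseteq B_0^c$ (using $\sum_yP(x,y)\le1$ and \eqref{choice_r_0}), both $g$ and $h$ are bounded on $B^c$; this finiteness is what makes the bootstrap legitimate.

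The key step is to feed the subharmonicity inequality back into $h$. For $y\in B^c$ we have $f(y)\le g(y)+h(y)$, hence
\[
h(x)\le \frac{1}{V(x)}\sum_{y\in B^c}P(x,y)\bigl(g(y)+h(y)\bigr),\qquad x\in B^c.
\]
Now I would use $1/V(y)\le C_1$ for $y\in B^c$, interchange the order of summation (all terms are non-negative, so Tonelli applies), and bound $\sum_{y\in B^c}P(x,y)P(y,z)\le P_2(x,z)\le C_*P(x,z)$ by assumption \eqref{A}. For the $g$-term, with $z$ ranging over $B$, this collapses the double sum to $C_1C_*\,g(x)$; for the $h$-term, with $z$ ranging over $B^c$, it reproduces $C_1C_*\,h(x)$. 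Altogether
\[
h(x)\le C_1C_*\,g(x)+C_1C_*\,h(x),\qquad x\in B^c.
\]
Because $B_0$ was chosen so that $C_1<1\wedge C_*^{-1}$, we have $C_1C_*<1$, and since $h(x)<\infty$ we may rearrange to $h(x)\le \frac{C_1C_*}{1-C_1C_*}\,g(x)$, so that $f(x)\le g(x)+h(x)\le \frac{1}{1-C_1C_*}\,g(x)$. This proves \eqref{upper_bound_new} with $C_2:=(1-C_1C_*)^{-1}$, a constant depending only on $C_*$ and the fixed number $C_1$ — in particular independent of $B$ and of $f$. The ``in particular'' statement follows at once: by Lemma \ref{lem:comparable}, $\overline{K}_B<\infty$, and $P(x,y)\le \overline{K}_B\,P(x,x_0)$ for all $y,x_0\in B$, so $g(x)\le \overline{K}_B\,\frac{P(x,x_0)}{V(x)}\sum_{y\in B}f(y)$.

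The main point to be careful about is the bootstrap step itself: one must check that all the series converge so that the term-by-term substitution $f(y)\le g(y)+h(y)$ and the interchange of summations are valid, and that $h(x)$ is finite before dividing by $1-C_1C_*$. This is precisely where the boundedness of $f$ and the strict inequality $C_1<C_*^{-1}$ built into the choice \eqref{choice_r_0} of $B_0$ enter; the rest is bookkeeping with non-negative sums.
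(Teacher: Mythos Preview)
Your proof is correct. Both arguments rest on the same three ingredients --- subharmonicity $f\le\calU f$ on $B^c$, the bound $1/V\le C_1<C_*^{-1}$ on $B_0^c$, and the DSP --- but the packaging differs. The paper iterates: starting from $f(x)\le c_2\sum_{y\in B}P(x,y)f(y)+C_1\|f\|_\infty$ with $c_2=C_1(1\vee C_*)$, it shows by induction that $f(x)\le(c_2+\cdots+c_2^n)\sum_{y\in B}P(x,y)f(y)+C_1^n\|f\|_\infty$, lets $n\to\infty$, and then substitutes the resulting bound once more into the second sum of the basic split to recover the factor $1/V(x)$, arriving at $C_2=1+C_*c_2/(1-c_2)$. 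You instead close the loop in a single step: with $f\le g+h$ as you define them, one substitution inside $h$ together with the DSP gives $h\le C_1C_*(g+h)$, which can be solved directly because $h$ is finite. This bypasses both the induction and the final $1/V(x)$-restoration step and yields the (at least as good) constant $C_2=(1-C_1C_*)^{-1}$; it is a cleaner, more direct version of the same self-improvement.
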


\begin{remark}
The constant $C_2$ depends neither on $f$, $V$, nor on the set $B$.
\end{remark}

\begin{proof}[Proof of Theorem \ref{thm:main_1}]
The second assertion follows directly from the first one combined with Lemma \ref{lem:comparable}. We are left to show \eqref{upper_bound_new}. 

For any fixed $B \supseteq B_0$ we have
\begin{align}\label{split}
f (x) \leq \frac{1}{V(x)}\sum_{y\in B}P(x,y)f(y) + \frac{1}{V(x)}\sum_{y\in B^c} P(x,y) f(y), \quad x\in B^c.
\end{align}
Observe that \eqref{choice_r_0} implies $C_1C_\ast <1$. Hence 
\begin{align*}
f(x) \leq c_2 \sum_{y\in B}P(x,y)f(y) +C_1\left\|f\right\|_{\infty},\quad x\in B^c,
\end{align*}
where we set $c_2 = C_1(1 \vee C_\ast )<1$. 
This  estimate may be iterated with the aid of \eqref{split} and the DSP.
We claim that for any $n\in \mathbb{N}$
\begin{align}\label{iteration}
f(x) \leq (c_2+c_2^2\ldots +c_2^n)\sum_{y\in B}P(x,y)f(y) + C_1^n \left\|f\right\|_{\infty},\quad x\in B^c.	 
\end{align}
It suffices to prove the inductive step and for this we assume that \eqref{iteration} holds for any fixed $n\in \mathbb{N}$ and we show it for $n+1$. By using \eqref{iteration} to estimate $f(y)$ under the second sum in \eqref{split}, we obtain
\begin{align*}
f(x) &\leq c_2 \sum_{y\in B}P(x,y)f(y)\\
&\quad+ C_1(c_2+\ldots +c_2^n)\!\!\!\sum_{y \in B^c} \!\!\! P(x,y) \sum_{z\in B}P(y,z)f(z) + C_1^{n+1}\left\|f\right\|_{\infty},\quad x\in B^c.	 
\end{align*}
By applying Tonelli's theorem and the DSP to the double sum above, we get for $x\in B^c$
\begin{align*}
f(x)\leq c_2 \sum_{y\in B}P(x,y)f(y)+ C_1C_\ast (c_2+\ldots +c_2^n) \sum_{y\in B}P(x,y)f(y) + C_1^{n+1}\left\|f\right\|_{\infty},
\end{align*}
and the claim follows as $C_1C_\ast \leq c_2$. We next let $n$ to infinity in \eqref{iteration} and as the constants $C_1$ and $c_2$ were chosen to be smaller than one we arrive at
$$
f(x) \leq \frac{c_2}{1-c_2} \sum_{y \in B} P(x,y)f(y), \quad x\in B^c.
$$
Finally, by applying this inequality to estimate $f(y)$ under the second sum in \eqref{split} and using the DSP we conclude \eqref{upper_bound_new} with the constant $C_2:= 1+ (C_{\ast}c_2)/(1-c_2)$.
The proof is finished. 
\end{proof}

Next we show that the upper bound obtained in Theorem \ref{thm:main_1} is sharp in the sense that for all non-negative $(\calU -I)$-superharmonic functions we always have the matching lower bound. Note that for the lower bound we do not need assumption \eqref{B}.

\begin{proposition} \label{prop:lower_bound}
For any $D \subset X$, any non-negative function $f$ which is superharmonic in $D$, and  for any finite set $B \subset X$ we have
$$
f(x) \geq \frac{1}{V(x)} \sum_{y \in B} P(x,y) f(y), \quad x \in D.
$$
In particular, under assumption \eqref{A}, 
$$
f(x) \geq \underline{K}_B \frac{P(x,x_0)}{V(x)} \sum_{y \in B} f(y), \quad x \in D, \ x_0 \in B.
$$
\end{proposition}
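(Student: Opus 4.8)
The plan is to read off the claim directly from the definition of superharmonicity together with nonnegativity of $f$; no iteration or auxiliary construction is needed. First I would unwind the hypothesis: $f$ being $(\calU-I)$-superharmonic in $D$ means $\calU f(x) \le f(x)$ for every $x \in D$, that is,
$$
\frac{1}{V(x)} \sum_{y \in X} P(x,y) f(y) \le f(x), \qquad x \in D.
$$
Here the left-hand side is a well-defined element of $[0,\infty)$: the series $\sum_{y} P(x,y) f(y)$ has nonnegative terms (so it is unambiguously defined in $[0,\infty]$), and the superharmonicity inequality forces it to be at most $V(x) f(x) < \infty$ since $\inf_{x} V(x) > 0$.

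Next, since $f \ge 0$ and $B \subseteq X$, discarding the terms with $y \notin B$ only decreases the sum:
$$
\frac{1}{V(x)} \sum_{y \in B} P(x,y) f(y) \le \frac{1}{V(x)} \sum_{y \in X} P(x,y) f(y) \le f(x), \qquad x \in D,
$$
which is exactly the first displayed inequality in the statement. This is the whole argument for the general bound; there is effectively no obstacle beyond checking that the quantities involved are finite, which was handled above.

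For the ``in particular'' part I would invoke Lemma \ref{lem:comparable}: under assumption \eqref{A} we have $\underline{K}_B > 0$, and by the very definition of $\underline{K}_B$ one has $P(x,y) \ge \underline{K}_B \, P(x,x_0)$ for all $x \in X$ and all $y, x_0 \in B$. Substituting this lower bound termwise into $\sum_{y \in B} P(x,y) f(y)$ and using $f \ge 0$ gives
$$
\frac{1}{V(x)} \sum_{y \in B} P(x,y) f(y) \ge \underline{K}_B \, \frac{P(x,x_0)}{V(x)} \sum_{y \in B} f(y), \qquad x \in X,\ x_0 \in B,
$$
and combining this with the inequality from the previous paragraph yields the second assertion for $x \in D$. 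I expect the referee-facing subtlety, such as it is, to be purely the finiteness/well-definedness remark about $\calU f$; the inequalities themselves are immediate.
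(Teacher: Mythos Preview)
Your proposal is correct and follows essentially the same approach as the paper: use the superharmonicity inequality $f(x) \geq \frac{1}{V(x)}\sum_{y\in X} P(x,y)f(y)$ on $D$, drop the nonnegative terms with $y\notin B$, and then invoke Lemma~\ref{lem:comparable} (via the definition of $\underline{K}_B$) for the second assertion. The only addition you make is the explicit finiteness remark for $\calU f$, which is harmless and arguably clarifying.
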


\begin{proof} The first estimate follows directly from the inequality $(\calU -I) f(x) \leq 0$, $x \in D$. Indeed,
$$
f(x) \geq \frac{1}{V(x)}  \sum_{y \in X} P(x,y) f(y) \geq \frac{1}{V(x)}  \sum_{y \in B} P(x,y) f(y), \quad x \in D. 
$$
The second assertion is implied by Lemma \ref{lem:comparable}.
\end{proof}

\normal

The following important result is a consequence of Theorem \ref{thm:main_1} and Proposition \ref{prop:lower_bound}.

\begin{corollary}\label{cor:BHI_infty}
Under assumptions \eqref{A} and \eqref{B}, for any finite set $B \subset X$ with $B \supseteq B_0$, for any set $D \subset X$, and for any non-negative, non-zero and bounded function $f$ which is harmonic in $D$ and  such that $f(x) = 0$ for $x \in D^c \cap B^c$ we have
$$
\underline{K}_B \leq \frac{f(x)}{\frac{P(x,x_0)}{V(x)}  \sum_{y \in B} f(y)} \leq C_2\overline{K}_B, \quad x \in D \cap B^c, \ x_0 \in B,
$$ 
where $C_2$ is the constant of Theorem \ref{thm:main_1}.

In particular, the \textbf{uniform Boundary Harnack Inequality at infinity} holds: if $f$ and $g$ are two such non-zero harmonic functions, then 
$$
\left(\frac{\underline{K}_B}{C_2 \overline{K}_B}\right)^2 \leq \frac{f(x)g(y)}{g(x)f(y)} \leq  \left(\frac{C_2 \overline{K}_B}{\underline{K}_B}\right)^2 ,  \quad x, y \in D \cap B^c.
$$
\end{corollary}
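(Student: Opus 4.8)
The plan is to combine the upper bound from Theorem \ref{thm:main_1} with the lower bound from Proposition \ref{prop:lower_bound}, applied to the same function $f$. First I would observe that a function $f$ which is $(\calU-I)$-harmonic in $D$ and vanishes on $D^c \cap B^c$ is in particular $(\calU-I)$-subharmonic on all of $B^c$: on $D \cap B^c$ we have $(\calU-I)f = 0$, while on $D^c \cap B^c$ we have $f(x) = 0 \le \frac{1}{V(x)}\sum_y P(x,y)f(y)$ since $f \ge 0$, so $(\calU-I)f(x) \ge 0$ there too. Since $f$ is also non-negative and bounded, Theorem \ref{thm:main_1} applies (using $B \supseteq B_0$) and gives the right-hand inequality $f(x) \le C_2 \overline{K}_B \frac{P(x,x_0)}{V(x)}\sum_{y\in B} f(y)$ for $x \in B^c$, in particular for $x \in D \cap B^c$. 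For the left-hand inequality, $f$ is trivially $(\calU-I)$-superharmonic in $D$ (it is harmonic there), so the second assertion of Proposition \ref{prop:lower_bound} yields $f(x) \ge \underline{K}_B \frac{P(x,x_0)}{V(x)}\sum_{y\in B} f(y)$ for $x \in D$, in particular for $x \in D \cap B^c$. One should note in passing that the denominator $\frac{P(x,x_0)}{V(x)}\sum_{y\in B} f(y)$ is strictly positive: $P(x,x_0) > 0$ by assumption \eqref{A}, $V(x) > 0$, and $\sum_{y \in B} f(y) > 0$ because $f$ is non-negative, non-zero, and supported in $B$ on the set $B^c$ (if $\sum_{y\in B} f(y) = 0$ then $f \equiv 0$ on $B^c$, and harmonicity in $D$ would force $f$ to vanish on a point of $B$ as well, iterating to $f \equiv 0$ by irreducibility — alternatively this positivity is already implicit since the displayed quotient appears in the statement). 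Dividing the two-sided inequality through by this positive quantity gives the first displayed estimate.

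For the uniform Boundary Harnack Inequality, I would apply the just-proved two-sided bound to both $f$ and $g$. Writing $q(x) := \frac{P(x,x_0)}{V(x)}$ and $S_f := \sum_{y\in B} f(y)$, $S_g := \sum_{y\in B} g(y)$, we have $\underline{K}_B \, q(x) S_f \le f(x) \le C_2\overline{K}_B \, q(x) S_f$ and similarly for $g$, for all $x \in D \cap B^c$. Then for $x, y \in D \cap B^c$,
\begin{align*}
\frac{f(x)g(y)}{g(x)f(y)} \le \frac{C_2\overline{K}_B\, q(x) S_f \cdot C_2\overline{K}_B\, q(y) S_g}{\underline{K}_B\, q(x) S_g \cdot \underline{K}_B\, q(y) S_f} = \left(\frac{C_2\overline{K}_B}{\underline{K}_B}\right)^2,
\end{align*}
and the $q$ and $S$ factors cancel; the lower bound follows symmetrically (or by swapping the roles of $x$ and $y$, equivalently of $f$ and $g$). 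This gives exactly the claimed inequality.

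There is essentially no hard step here — the corollary is a bookkeeping combination of the two preceding results. The only point requiring a moment's care is the reduction of the harmonicity-plus-vanishing hypothesis to subharmonicity on all of $B^c$, so that Theorem \ref{thm:main_1} is applicable as stated; this is the place where the assumption $f(x) = 0$ on $D^c \cap B^c$ is used, and it is what makes the upper bound hold on the full set $D \cap B^c$ rather than just where $f$ is a priori controlled. Everything else is elementary positivity and cancellation of common factors.
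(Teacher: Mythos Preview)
Your proof is correct and matches the paper's approach: the paper presents this corollary without an explicit proof, stating only that it is a consequence of Theorem~\ref{thm:main_1} and Proposition~\ref{prop:lower_bound}, which is precisely the combination you carry out. Your observation that the vanishing of $f$ on $D^c \cap B^c$ promotes harmonicity in $D$ to subharmonicity on all of $B^c$ is the one detail the paper leaves implicit, and you have handled it correctly.
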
 

Finally, we remark that all the results proved in this section extend easily beyond the set-up of (sub-)probabilistic kernels. 

\begin{remark} \label{rem:ext} 
Theorem \ref{thm:main_1}, Proposition \ref{prop:lower_bound} and Corollary \ref{cor:BHI_infty} hold true for more general kernels $P:X \times X \to (0,\infty)$ which satisfy 
$$
M_1:=\sup_{x \in X} \sum_{y \in X} P(x,y) < \infty
$$ 
and 
$$
M_2:=\sup_{x, y \in X} \frac{\sum_{z \in X} P(x,z)P(z,y)}{P(x,y)} < \infty.
$$
Indeed, given such a kernel $P$ and a confining potential $V$ we can define $\widetilde P(x,y) := P(x,y)/M_1$ and $\widetilde V = V/M_1$ and observe that the sub-probability kernel $\widetilde P(x,y)$ satisfies assumption \eqref{A} with the constant $C_* = M_1 M_2$. Since 
$P(x,y)/V(x) = \widetilde P(x,y)/\widetilde V(x)$
and $\widetilde P$, $\widetilde V$ satisfy the assumptions of Theorem \ref{thm:main_1}, Proposition \ref{prop:lower_bound} and Corollary \ref{cor:BHI_infty}, all of these results apply to $P$ and $V$, and the dependence of a finite $B_0$ and all of the constants in the presented estimates remain unchanged. 
\end{remark}

\subsection{Estimates for nearest-neighbor random walks} \label{sec:est_nnrw}
In this paragraph we present a counterpart of the estimates obtained in Theorem \ref{thm:main_1} and Proposition \ref{prop:lower_bound} for the nearest-neighbor walk evolving in a graph. 

We start by imposing a graph structure in $X$. \normal The graph $G=(X,E)$ over $X$ (points in $X$ form the set of \emph{vertices}) is defined by specifying $E \subset \big\{\{x,y\}: x,y \in X \big\}$, the set of \emph{edges}. Two vertices $x, y \in X$ are connected by an edge in $G$ if and only if $\{x,y\} \in E$. In this case we call $x$ and $y$ \emph{neighbours} and write $x \sim y$ (note that $\{x,y\} = \{y,x\}$). We say that the graph $G$ 
is of \emph{finite geometry} if $\# \left\{y \in X: x \sim y \right\} < \infty$, for all $x \in X$ (i.e.\ the number of neighbours of an arbitrary vertex $x \in X$ is finite). Some authors call such a graph locally finite. Moreover, $G$ is \emph{connected} if for every $x,y \in X$, $x \neq y$, there exists a sequence $\big\{x_i\big\}_{i=0}^n \subset X$ with $x_0=x$, $x_n=y$ such that 
		$x_{i-1} \sim x_i,$ for $i = 1,\ldots,n$ (i.e.\ every two different vertices $x$ and $y$ are connected by a path in $G$). Every shortest path (the length of the path is counted as the number of edges belonging to that path) connecting two different vertices $x$ and $y$ is called a \emph{geodesic path} between $x$ and $y$.
For the rest of this section we assume that 
	\begin{itemize}
\item[\bfseries(\namedlabel{C}{C})] $G$ is a connected graph of finite geometry.  
\end{itemize}

\noindent
The assumption that $G$ is a connected graph allows us to define the natural \emph{graph} (\emph{geodesic}) \emph{distance} $d$ in $X$.
More precisely, $d(x,y)$ is defined as the length of the geodesic path connecting $x$ and $y$. As $G$ is of  finite geometry, every open geodesic ball $B_r(x) = \big\{y \in X: d(x,y) < r\big\}$ is finite and since $X$ is infinite, the metric space $(X,d)$ is unbounded.

We consider a  (sub-)probability kernel $P:X \times X \to [0,\infty)$ such that for every two vertices $x, y \in X$,
\begin{align} \label{eq:nnrw_def}
 P(x,y)>0 \quad \Longleftrightarrow \quad x \sim y .
\end{align} 
We do not assume that $P(x,y)$ is symmetric.
Let $\{S_n: n \geq 0\}$ be a time-homogeneous Markov chain  associated to $P$,
which is called a \emph{nearest-neighbor random walk} on a graph $G$. Due to the assumption of finite geometry the range of $S_n$ is a finite subset of $X$ for every $n$. This means that such a process can be understood as a discrete time counterpart of a diffusion in $X$.

The corresponding Feynman--Kac operator $\calU -I$ is given by
\begin{equation*}
(\calU -I)f (x)=  \frac{1}{V(x)} \sum_{y \in X} P(x,y) f(y) - f(x), \quad x \in X,
\end{equation*}
for all admissible functions $f$ on $X$.
To find satisfactory estimates of harmonic functions, we restrict our attention to the class of isotropic and increasing functions $V$. More precisely, we assume that there exists $x_0 \in X$ such that 
\begin{align*} 
V(x)=V(y),\ \text{if} \quad d(x,x_0) = d(y,x_0),\quad \mathrm{and}\quad 
V(x) \geq V(y),\ \text{if}\ d(x,x_0) \geq d(y,x_0).
\end{align*}
This can be equivalently stated as follows.
	\begin{itemize}  
\item[\bfseries(\namedlabel{D}{D})] There exist $x_0 \in X$ and an increasing function $W:\N_0 \to (0,\infty)$ such that $V(x) = W(d(x_0,x))$, for any $x \in X$.
\end{itemize}

\noindent
Our results apply well to the subclass of confining potentials that are isotropic and increasing, but formally we do not require assumption \eqref{B} in this paragraph. 
We first give the upper bound for $(\calU -I)$-subharmonic functions. 

\begin{theorem} \label{th:upper_subharm_nnrw} 
Let assumptions \eqref{C} and \eqref{D} hold with a fixed $x_0 \in X$ and a profile function $W$. Let $\calU -I$ be the Feynman--Kac operator corresponding to the  kernel $P(x,y)$ satisfying \eqref{eq:nnrw_def}. Then for any $r \in \N$ and for any non-negative and bounded function $f$ which is $(\calU -I)$-subharmonic in $ B_r(x_0)^c$ we have
\begin{equation*}
f(x)\leqslant \left\|f\right\|_{\infty} \prod_{i=r}^{d(x,x_0)} \frac{1}{W(i)}, \quad x \in B_r(x_0)^c.
\end{equation*}
\end{theorem}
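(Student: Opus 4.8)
The plan is to reduce the statement to a one-dimensional recursion for the tail suprema of $f$. For $k \in \N_0$ I would set
$$
a_k := \sup\big\{ f(x) : d(x,x_0) \ge k \big\},
$$
which is finite since $0 \le a_k \le \|f\|_{\infty}$ ($f$ being bounded and non-negative), and non-increasing in $k$ because the sets $\{x : d(x,x_0)\ge k\}$ shrink as $k$ grows. The target estimate then follows at once from the claim that $a_k \le \|f\|_{\infty} \prod_{i=r}^{k}\frac1{W(i)}$ for every $k \ge r$, since $f(x) \le a_{d(x,x_0)}$ for each $x$.

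The first step is a pointwise bound from subharmonicity. Fix $x \in B_r(x_0)^c$ and put $k = d(x,x_0) \ge r$. From $(\calU - I)f(x)\ge 0$ and $V(x) = W(k)$ one gets
$$
f(x) \le \frac{1}{W(k)}\sum_{y \in X} P(x,y) f(y) = \frac{1}{W(k)}\sum_{y \sim x} P(x,y) f(y),
$$
using \eqref{eq:nnrw_def}. Since $f \ge 0$ and $\sum_{y\sim x}P(x,y)\le 1$, the right-hand side is at most $\frac{1}{W(k)}\max_{y\sim x} f(y)$; and every neighbour $y$ of $x$ satisfies $d(y,x_0)\ge k-1$ by the triangle inequality, hence $f(y)\le a_{k-1}$. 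This gives $f(x) \le a_{k-1}/W(k)$, and taking the supremum over all $x$ with $d(x,x_0)=k$ yields $\sup\{f(x):d(x,x_0)=k\}\le a_{k-1}/W(k)$ for every $k\ge r$.

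The second step passes from spheres to tails. For $k \ge r$, writing $\{x:d(x,x_0)\ge k\}$ as the union of the spheres of radius $j \ge k$ and inserting the previous bound,
$$
a_k \le \sup_{j \ge k}\frac{a_{j-1}}{W(j)} \le \frac{a_{k-1}}{W(k)},
$$
where the last inequality uses both that $a$ is non-increasing ($a_{j-1}\le a_{k-1}$ for $j\ge k$) and that $W$ is increasing ($W(j)\ge W(k)$ for $j\ge k$); this is exactly where assumption \eqref{D} is used. Iterating this recursion starting from $a_{r-1}\le \|f\|_{\infty}$ gives $a_k \le \|f\|_{\infty}\prod_{i=r}^k \frac1{W(i)}$ for all $k\ge r$, and combining with $f(x)\le a_{d(x,x_0)}$ finishes the proof.

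I do not expect a serious obstacle. The one point that requires care is that the naive "sphere" recursion — in which $\sup\{f : d(\cdot,x_0)=k\}$ is only controlled by $\frac{1}{W(k)}$ times the maximum of the sphere-suprema at radii $k-1,k,k+1$ — does not close on itself, because of the same-sphere and outward contributions. Replacing the sphere-suprema by the monotone tail quantities $a_k$, and exploiting the monotonicity of $W$, is what turns the recursion into a one-sided one and makes it solvable. Boundedness of $f$ enters only to guarantee $a_k<\infty$; in particular no lower bound on $W$ (such as $W\ge 1$) is needed, the estimate being vacuously true wherever $W$ is small.
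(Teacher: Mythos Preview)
Your proof is correct, and it takes a genuinely different (and rather cleaner) route than the paper's argument.

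The paper proceeds by induction on a depth parameter $j$: it fixes $x$ with $d(x,x_0)\ge r+j$, chooses a geodesic $x_0\to x_1\to\cdots\to x_n=x$, and shows $f(x)\le\|f\|_\infty\big(V(x_n)\cdots V(x_{n-j})\big)^{-1}$. The inductive step plugs the induction hypothesis into the subharmonic inequality and splits $\sum_{z\sim x}$ into three pieces according to whether $d(z,x_0)$ equals $n-1$, $n$, or $n+1$; isotropy of $V$ handles the first piece, and monotonicity of $W$ is used to push the products in the second and third pieces down to the product along the chosen geodesic. Your device of replacing sphere suprema by the monotone tail quantities $a_k=\sup\{f(x):d(x,x_0)\ge k\}$ absorbs the ``same-sphere'' and ``outward'' contributions automatically, so the three-case split disappears and one is left with the single clean recursion $a_k\le a_{k-1}/W(k)$. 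This makes the argument shorter and, incidentally, shows that one only needs $V(x)\ge W(d(x,x_0))$ with $W$ increasing (full isotropy is not used beyond this lower bound), whereas the paper's bookkeeping along geodesics invokes the equality $V(x)=W(d(x,x_0))$ explicitly. The paper's approach, on the other hand, is more ``local'' in that it never takes suprema over infinite sets, which could be convenient in related settings.
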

	
\begin{proof}
Since $f$ is bounded and $(\calU -I)$-subharmonic in $B_r(x_0)^c$, 
\begin{equation}\label{first}
f(x)\leqslant \frac{1}{V(x)}\sum_{y \sim x}P(x,y)f(y)\leq \frac{1}{V(x)}\left\|f\right\|_{\infty}, \quad x \in B_r(x_0)^c.
\end{equation}
We next show that 
for any $j \geqslant 1$ and all $x \in X$ such that $d(x,x_0) \geqslant r+j$, and for any geodesic path $x_0\rightarrow \ldots\rightarrow x_n = x$ (clearly,  $n=d(x,x_0)$) it holds that 
\begin{equation}\label{Induction_to_show}
f(x) \leqslant \left\|f\right\|_{\infty} \big(V(x_n)V(x_{n-1})\ldots V(x_{n-j})\big)^{-1}.
\end{equation}
We use induction with respect to the parameter $j\geq 1$. If $j=1$ then for any  $x$ with 
 $d(x,x_0) \geq r+1$ and for any geodesic path $x_0\rightarrow \ldots\rightarrow x_n = x$ we apply \eqref{first} and we arrive at
\begin{align*}
f(x) \leqslant \frac{1}{V(x)}\left\|f\right\|_{\infty}\sum_{y\sim x}P(x,y)\frac{1}{V(y)}.
\end{align*}
Since $x=x_n$ and $x_{n-1}$ is one of its neighbours lying on a geodesic path connecting $x$ with $x_0$, we have $d(x_{n-1},x_0)\leqslant d(y,x_0)$, for every $y\sim x$. By \eqref{D} this implies that $V(x_{n-1}) \leq V(y)$, for $y\sim x$. In consequence, 
\begin{align*}
f(x)\leq \left\|f\right\|_{\infty} (V(x_{n}) \cdot V(x_{n-1}))^{-1},
\end{align*}
which proves \eqref{Induction_to_show} for $j=1$.

We proceed to the proof of the inductive step which will imply the desired  estimate. 
We assume that \eqref{Induction_to_show} is valid for some $j\geq 1$ and we aim to show that 
for any $x \in X$ with $d(x,x_0) \geqslant r+j+1$ and for any geodesic path $x_0\rightarrow \ldots\rightarrow x_n = x$ the following estimate is valid
\begin{equation*}
f(x) \leqslant \left\|f\right\|_{\infty} \big(V(x_{n}) V(x_{n-1}) \ldots V(x_{n-(j+1)})\big)^{-1}.
\end{equation*}
We fix $x$ with $d(x,x_0) \geqslant r+j+1$ and a geodesic path $x_0\rightarrow \ldots\rightarrow x_n = x$. 
By the $(\calU -I)$-subharmonicity, we have 
\begin{align}\label{split_111}
f(x) & \leqslant \frac{1}{V(x)}\sum_{z\sim x}P(x,z)f(z) \nonumber \\
& = \frac{1}{V(x)} \Big(\sum_{z\sim x,\atop  d(z,x_0)=n-1} + \sum_{z\sim x,\atop  d(z,x_0)=n} + \sum_{z\sim x,\atop  d(z,x_0)=n+1}\Big)P(x,z)f(z).
\end{align}
Since $V$ is isotropic,
$$
V(z_{n-1}) V(z_{n-2}) \ldots V(z_{n-1-j}) = V(x_{n-1}) V(x_{n-2}) \ldots V(x_{n-1-j}),
$$
for every $z \sim x$ such that $d(z,x_0)=n-1$, where $x_0\rightarrow z_1 \rightarrow \ldots\rightarrow z_{n-1} = z$ is the geodesic path connecting $x_0$ with $z$. In view of \eqref{Induction_to_show} it follows that 
\begin{equation*}
\sum_{z\sim x, \atop d(z,x_0)=n-1} P(x,z)f(z)
\leqslant\left\|f\right\|_{\infty}\big(V(x_{n-1}) \ldots V(x_{n-(j+1)})\big)^{-1} \sum_{z\sim x, \atop d(z,x_0)=n-1} P(x,z),
\end{equation*}
We next consider the second sum in \eqref{split_111}. Since $V$ is isotropic, for every $z \sim x$ such that $d(z,x_0)=n$, we have
$$
V(z_{n}) V(z_{n-1}) \ldots V(z_{n-j}) = V(x_{n}) V(x_{n-1}) \ldots V(x_{n-j}),
$$
where $x_0\rightarrow z_1 \rightarrow \ldots\rightarrow z_{n} = z$ is the geodesic path connecting $x_0$ with $z$. Therefore,
\begin{equation*}
\sum_{z\sim x, \atop d(z,x_0)=n}P(x,z)f(z)\leqslant \left\|f\right\|_{\infty} \big(V(x_n) V(x_{n-1}) \ldots V(x_{n-j})\big)^{-1} \sum_{z\sim x, \atop d(z,x_0)=n}P(x,z).
\end{equation*}
Since V is increasing, we also have
\begin{equation*}
\begin{split}
V(x_n) V(x_{n-1}) \ldots V(x_{n-j})
					\geq V(x_{n-1}) \ldots V(x_{n-(j+1)}).
\end{split}
\end{equation*}
By proceeding in a similar manner we find analogous upper bound for the third sum in \eqref{split_111}, that is
\begin{equation*}
\sum_{z\sim x, \atop d(z,x_0)=n+1}P(x,z)f(z)\leqslant \left\|f\right\|_{\infty}\big(V(x_{n-1}) \ldots V(x_{n-(j+1)})\big)^{-1}\sum_{z\sim x, \atop d(z,x_0)=n+1}P(x,z).
\end{equation*}
Now, by inserting all these bounds into \eqref{split_111}, we conclude that
\begin{align*}
f(x)\leqslant \left\|f\right\|_{\infty}\big(V(x_{n}) V(x_{n-1}) \ldots V(x_{n-(j+1)})\big)^{-1},
					\end{align*}
which finishes the proof of the inductive step.

Finally,  for any $x$ with $d(x,x_0) \geq r$,
\begin{equation*}
f(x) \leqslant \left\|f\right\|_{\infty}\big(V(x_{n}) V(x_{n-1}) \ldots V(x_r)\big)^{-1}
=\left\|f\right\|_{\infty} \prod_{i=r}^{d(x,x_0)}\frac{1}{W(i)},
\end{equation*}
which completes the proof.
\end{proof}

To obtain the lower bound for $(\calU -I)$-superharmonic functions we consider connected and geodesically convex subsets of $X$. The set $D \subset X$ is called \emph{geodesically convex} in a graph $G=(X,E)$ if $D$ contains each vertex on any geodesic path connecting vertices in $D$.
We also need an additional regularity assumption on the kernel $P(x,y)$, which coincides with the so-called $p_0$-condition imposed in \cite{Grigoryan-Telcs} (cf.\ \cite[Definition 2.1.1]{Kumagai}), that is
\begin{align} \label{eq:nnrw_add_ass} 
M:= \inf \left\{P(x,y): x,y \in X, \ x \sim y\right\} > 0.
\end{align}

\begin{theorem}  \label{th:lower_est_nnrw}
Let assumptions \eqref{C} and \eqref{D} hold with some $x_0 \in X$ and a profile function $W$. Let $\calU -I$ be the Feynman--Kac operator corresponding to the kernel $P(x,y)$ satisfying \eqref{eq:nnrw_def} and \eqref{eq:nnrw_add_ass}. Then, for any connected geodesically convex set $D \subset X$, for any non-negative function $f$ which is $(\calU -I)$-superharmonic in $D$, for any $x \in D$, and  for any $x_r \in D$ which lies on the geodesic path connecting $x$ with $x_0$ and is such that $d(x_r,x_0) = r < d(x,x_0)$, we have
\begin{equation}
f(x)\geq f(x_r) \prod_{i=r+1}^{d(x,x_0)}\frac{M}{W(i)}.
\end{equation}
\end{theorem}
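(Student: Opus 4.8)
The plan is to iterate the defining superharmonicity inequality along the geodesic segment joining $x$ to $x_r$, losing exactly one factor $M/W(i)$ at each step, and to use geodesic convexity of $D$ to guarantee that every intermediate vertex where we invoke superharmonicity actually lies in $D$.

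First I would record a one-step estimate. Suppose $z \in D$ and $w \sim z$. Since $f$ is non-negative and $(\calU-I)$-superharmonic in $D$, the inequality $(\calU - I)f(z) \le 0$ rearranges to
\[
f(z) \ge \frac{1}{V(z)}\sum_{y \sim z} P(z,y) f(y) \ge \frac{1}{V(z)} P(z,w) f(w) \ge \frac{M}{W(d(z,x_0))}\, f(w),
\]
where the second inequality just discards the non-negative terms with $y \ne w$, and the last one uses \eqref{eq:nnrw_add_ass} (so $P(z,w) \ge M$ because $z \sim w$) together with assumption \eqref{D} (so $V(z) = W(d(z,x_0))$).

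Next I would set up the path. Fix $x \in D$ and $x_r \in D$ lying on a geodesic path $x_0 \to x_1 \to \dots \to x_n = x$ with $n = d(x,x_0)$ and $d(x_r,x_0) = r < n$. Then $d(x_0,x_i) = i$ for each $i$, so $V(x_i) = W(i)$ by \eqref{D}; moreover the reversed segment $x_n \to x_{n-1} \to \dots \to x_{r+1} \to x_r$ is a geodesic path joining $x$ and $x_r$, and since $D$ is connected and geodesically convex and contains both endpoints, every vertex $x_i$ with $r \le i \le n$ belongs to $D$. Now apply the one-step estimate successively at $z = x_i$, $w = x_{i-1}$ for $i = n, n-1, \dots, r+1$ — each application is legitimate because $x_i \in D$ and $x_{i-1} \sim x_i$ — to get
\[
f(x) = f(x_n) \ge \frac{M}{W(n)} f(x_{n-1}) \ge \cdots \ge \Big( \prod_{i=r+1}^{n} \frac{M}{W(i)} \Big) f(x_r),
\]
which, since $n = d(x,x_0)$ and the product is non-empty because $r < d(x,x_0)$, is precisely the asserted bound.

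The proof is short, and the only point that genuinely needs care is ensuring that the entire descending path $x_n, x_{n-1}, \dots, x_r$ stays inside $D$ so that the superharmonicity inequality can be invoked at each of its vertices; this is exactly what the hypothesis of geodesic convexity delivers, so I do not anticipate any serious obstacle beyond keeping track of the indices in the telescoping product.
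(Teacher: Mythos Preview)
Your proof is correct and follows essentially the same route as the paper's: both iterate the superharmonicity inequality along the geodesic segment from $x$ down to $x_r$, peeling off one factor $M/W(i)$ at each step and using geodesic convexity to ensure every intermediate vertex lies in $D$. Your write-up is in fact more explicit about why the intermediate $x_i$ belong to $D$ than the paper's own proof.
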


\begin{proof}
We fix $x \in D$, a path $x_{0} \rightarrow\ldots\rightarrow x_n = x$ and $x_r \in D$.
By our assumptions, we have 
\begin{equation*}
f(x)\geqslant \frac{1}{V(x)} \sum_{y\in X}P(x,y)f(y)
\end{equation*}
and $x_{r+1},\ldots,x_{n-1} \in D$. 
Since $f$ is non-negative, we can write
\begin{equation*}
f(x)\geqslant  \frac{1}{V(x_n)} P(x_n,x_{n-1})f(x_{n-1})\geqslant \frac{M}{V(x_{n})} f(x_{n-1}).
\end{equation*}
Similarly, 
\begin{equation*}
f(x)\geqslant M e^{-V(x_n)}f(x_{n-1})\geqslant M^2 \exp\left(-V(x_n)-V(x_{n-1})\right)f(x_{n-2}).
\end{equation*}
By iterating this $(n-r)$--times, we arrive at  
\begin{equation*}
f(x)\geqslant M^{n-r} \big(V(x_n) V(x_{n-1} \ldots V(x_{r+1}\big)^{-1}f(x_{r}).
\end{equation*}
Since $V(x) = W(d(x_0,x))$ and $n=d(x,x_0)$, this leads to the desired bound. 
\end{proof}

To find the rate of decay at infinity of $(\calU -I)$-harmonic functions we impose an additional assumption on the profile function $W$, namely it is assumed that $\log W$ is regularly varying at infinity of index $\rho \geq 0$, see \cite{Bingham_book}.
We use the equality
$$
\prod_{i=r}^{d(x,x_0)}\frac{1}{W(i)} = \exp\left( -\sum_{i=r}^{d(x,x_0)} \log W(i) \right)
$$
and apply the following lemma to get a necessary estimate for the sum in the exponent.

\begin{lemma}{\cite[Lemma 2.4]{Nagaev}}\label{lem:reg_var}
Let $g$ be regularly varying at infinity of index $\rho\geq 0$. Then
\begin{align*}
\sum_{k=1}^ng(k) \sim \frac{ng(n)}{1+\rho}, \normal\quad \mathrm{as}\ n\to \infty.
\end{align*}
\end{lemma}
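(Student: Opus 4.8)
This is a classical statement about regularly varying sequences; I would prove it by reducing to two standard ingredients from the theory of regular variation (see \cite{Bingham_book}): Karamata's theorem for integrals of regularly varying functions, and a comparison of the sum with the corresponding integral.

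\emph{Passing to an integral.} Set $I_n:=\int_1^n g(t)\,\mathrm{d}t$. Since $g$ is positive, measurable and regularly varying of index $\rho\geq 0$, Potter's bounds give $g(t)\geq c\,t^{-1/2}$ for some $c>0$ and all large $t$, so $I_n\to\infty$; moreover the direct half of Karamata's theorem yields
\[
I_n=\int_1^n g(t)\,\mathrm{d}t\sim\frac{n\,g(n)}{1+\rho},\qquad n\to\infty
\]
(its proof itself runs through the uniform convergence theorem applied to $\int_0^1 g(xu)/g(x)\,\mathrm{d}u$; alternatively one quotes \cite{Bingham_book} directly).

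\emph{Comparing the sum with the integral.} It remains to show $S_n:=\sum_{k=1}^n g(k)\sim I_n$, for then the assertion follows upon combining with the displayed asymptotics and the trivial bound $g(n)=o(n\,g(n))$. Fix $\varepsilon>0$. By the uniform convergence theorem, $g(\lambda x)/g(x)\to\lambda^\rho$ uniformly for $\lambda$ in the compact interval $[1/2,1]$. Writing an arbitrary $t\in[k-1,k]$ with $k\geq 2$ as $t=\lambda k$ with $\lambda=t/k\in[1-1/k,1]$ and using $\lambda^\rho\to 1$, one finds a threshold $K=K(\varepsilon)$ with $|g(t)/g(k)-1|\leq\varepsilon$ for all $k\geq K$ and all $t\in[k-1,k]$. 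Hence $\bigl|\int_{k-1}^k g(t)\,\mathrm{d}t-g(k)\bigr|\leq\varepsilon\,g(k)$ for $k\geq K$, and summing over $K<k\leq n$ gives $|I_n-S_n|\leq C_K+\varepsilon S_n$ with $C_K$ independent of $n$. Dividing by $I_n\to\infty$ and rearranging produces $\limsup_n S_n/I_n\leq(1-\varepsilon)^{-1}$ and $\liminf_n S_n/I_n\geq(1+\varepsilon)^{-1}$; letting $\varepsilon\downarrow 0$ gives $S_n\sim I_n$.

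\emph{The main point.} The delicate step is the sum-to-integral passage for a general regularly varying $g$: estimating crudely via Potter's bounds only traps $S_n/I_n$ between two positive constants, so getting the sharp constant $1$ genuinely requires the uniform convergence theorem. In the way the lemma is actually used here, however, $g=\log W$ with $W$ increasing, so $g$ is monotone and the passage is elementary: the bracketing $g(k-1)\leq\int_{k-1}^k g(t)\,\mathrm{d}t\leq g(k)$ gives $0\leq S_n-I_n\leq g(n)=o(n\,g(n))$, so only Karamata's theorem is then needed. This is consistent with the application that immediately follows the statement.
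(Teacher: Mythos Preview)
Your argument is correct. The paper, however, does not give its own proof of this lemma: it is quoted verbatim as \cite[Lemma~2.4]{Nagaev} and used as a black box in the derivation of Corollary~\ref{cor:nnrw_strongest}. So there is nothing to compare against at the level of proof strategy.

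Your route---Karamata's theorem for $\int_1^n g$ together with a sum-to-integral comparison via the uniform convergence theorem---is the standard one, and the details check out: from $|I_n-S_n|\leq C_K+\varepsilon S_n$ one indeed extracts $(1-\varepsilon)^{-1}$ and $(1+\varepsilon)^{-1}$ as upper and lower bounds for $\limsup$ and $\liminf$ of $S_n/I_n$, respectively. Your closing remark is also apt: in the only place the lemma is invoked (Corollary~\ref{cor:nnrw_strongest}), $g=\log W$ is increasing, so the elementary bracketing $g(k-1)\leq\int_{k-1}^k g\leq g(k)$ suffices and the uniform convergence theorem is not needed.
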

\noindent
The notation $a_n \sim b_n$ means that $\lim_{n \to \infty} \frac{a_n}{b_n} =1$. 

To simplify the statement we formulate the following corollary for the complement of a ball only. It, however, extends directly to more general unbounded sets $D \subset X$, cf.\ Theorem \ref{th:lower_est_nnrw}. 
\normal
\begin{corollary} \label{cor:nnrw_strongest}
Let assumptions \eqref{C} and \eqref{D} hold with a fixed $x_0 \in X$ and an increasing profile function $W$ such that $\log W$ is regularly varying at infinity of index $\rho \geq 0$. Let $\calU -I$ be the Feynman--Kac operator corresponding to the  kernel $P(x,y)$ satisfying \eqref{eq:nnrw_def} and \eqref{eq:nnrw_add_ass}. Then for any non-negative, non-zero and bounded function $f$ which is $(\calU -I)$-harmonic in $B_r(x_0)^c$ there are constants $C \geq 1$ and $\widetilde C >0$ such that
\begin{align*}
\frac{1}{C}  \exp\left(-\frac{1}{1+\rho}d(x,x_0) \right. & \left.\log W(d(x,x_0))  - \widetilde C d(x,x_0)  \right) \leq f(x)\\
&  \leq C\exp\left(-\frac{1}{1+\rho}d(x,x_0) \log W(d(x,x_0)) \right), \quad x \in B_r(x_0)^c.
\end{align*}
In particular,
$$
\lim_{ d(x,x_0) \to \infty} \frac{\log f(x)}{d(x,x_0) \log W(d(x,x_0))} = -\frac{1}{1+\rho}.
$$
\end{corollary}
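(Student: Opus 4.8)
The plan is to glue together the two one-sided bounds already obtained in this section with the Karamata-type asymptotics of Lemma~\ref{lem:reg_var}. Since a $(\calU-I)$-harmonic function is in particular $(\calU-I)$-subharmonic in $B_r(x_0)^c$, Theorem~\ref{th:upper_subharm_nnrw} gives
\[
f(x)\le \|f\|_\infty \prod_{i=r}^{d(x,x_0)}\frac{1}{W(i)}
=\|f\|_\infty\exp\Big(-\sum_{i=r}^{d(x,x_0)}\log W(i)\Big),\qquad x\in B_r(x_0)^c .
\]
For the lower bound I would argue as in the proof of Theorem~\ref{th:lower_est_nnrw}: fixing a geodesic $x_0\to x_1\to\dots\to x_n=x$ with $n=d(x,x_0)>r$, every $x_i$ with $i>r$ lies in $B_r(x_0)^c$, so iterating the superharmonic mean-value inequality at $x_n,\dots,x_{r+1}$, using \eqref{eq:nnrw_add_ass} and $V(x_i)=W(i)$, yields
\[
f(x)\ge f(x_r)\prod_{i=r+1}^{d(x,x_0)}\frac{M}{W(i)}
= f(x_r)\,M^{\,d(x,x_0)-r}\exp\Big(-\sum_{i=r+1}^{d(x,x_0)}\log W(i)\Big),
\]
where $x_r$ is the point of the geodesic at distance $r$ from $x_0$ (equivalently, one may invoke Theorem~\ref{th:lower_est_nnrw} directly with a suitable unbounded $D\ni x$).

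The next step is to estimate the tail sums. Writing $\sum_{i=r}^{n}\log W(i)=\sum_{i=1}^{n}\log W(i)-\sum_{i=1}^{r-1}\log W(i)$ and applying Lemma~\ref{lem:reg_var} to $g=\log W$, which is regularly varying at infinity of index $\rho$, we obtain $\sum_{i=1}^{n}\log W(i)=\tfrac{1}{1+\rho}\,n\log W(n)\,(1+o(1))$ as $n\to\infty$, the finite piece $\sum_{i=1}^{r-1}\log W(i)$ being a fixed constant that is absorbed into $C$; the same identity with summation range $r+1,\dots,n$ controls the sum in the lower bound. For the lower bound I would further use that $0<M\le 1$, so that $M^{\,d(x,x_0)-r}=\exp\big(-|\log M|\,d(x,x_0)\big)\cdot\exp\big(|\log M|\,r\big)$ is of the form $\exp(-\widetilde C\,d(x,x_0))$ up to a constant, and that $f(x_r)$ is bounded below by a positive constant uniformly in $x$: the sphere $\{z:d(z,x_0)=r\}$ is finite by \eqref{C}, and a non-negative, non-zero function that is $(\calU-I)$-superharmonic in $B_r(x_0)^c$ cannot vanish at a vertex of $B_r(x_0)^c$ without vanishing at all its neighbours, hence by connectedness of $G$ it is strictly positive on $B_r(x_0)^c$. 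Combining these inputs, and enlarging $C$ if necessary to cover the finitely many $x$ at small distance from $x_0$ where the asymptotics are not yet in force, produces the two-sided estimate with constants $C\ge 1$ and $\widetilde C>0$.

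The limiting identity then follows by inspection: taking logarithms in the two-sided bound and dividing by $d(x,x_0)\log W(d(x,x_0))$, which tends to $+\infty$ as $d(x,x_0)\to\infty$, the contributions of $\log C$, $\log\|f\|_\infty$ and $\widetilde C\,d(x,x_0)$ all vanish, leaving exactly $-\tfrac{1}{1+\rho}$.

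The step I expect to be delicate is the passage from the purely asymptotic equivalence in Lemma~\ref{lem:reg_var} to the exponential bounds with fixed constants: one must track that the $(1+o(1))$ error sits in the exponent multiplying $n\log W(n)$, so that on the lower side it is dominated by (and absorbed into) the $\widetilde C\,d(x,x_0)$ term, and that after dividing by $d(x,x_0)\log W(d(x,x_0))$ it is harmless for the limit. A secondary technical point is the uniform strict positivity of $f$ on the sphere of radius $r$, which is precisely what legitimises the constant $1/C$ in the lower bound and rests on the minimum principle for superharmonic functions on the connected, locally finite graph $G$.
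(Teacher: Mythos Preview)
Your approach---combine the product bounds of Theorems~\ref{th:upper_subharm_nnrw} and~\ref{th:lower_est_nnrw} with the product-to-exponential identity and Lemma~\ref{lem:reg_var}---is exactly the route the paper indicates; the paper supplies no argument beyond naming these ingredients, so on the level of strategy you match it precisely, and your derivation of the limit statement is correct.

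Your resolution of the first ``delicate point,'' however, has a real gap. Lemma~\ref{lem:reg_var} gives $\sum_{i\le n}\log W(i)=\tfrac{1}{1+\rho}\,n\log W(n)\,(1+o(1))$, so the error carried into the exponent is $o\big(n\log W(n)\big)$, not $O(n)$. When $\rho\ge 1$ (or more generally when $\log W(n)/n\not\to 0$) this cannot be absorbed into a term $\widetilde C\,d(x,x_0)$ as you claim; and on the upper-bound side the corollary allows no linear error term at all, so one would even need $\sum_{i=r}^{n}\log W(i)\ge \tfrac{1}{1+\rho}\,n\log W(n)-O(1)$, which Lemma~\ref{lem:reg_var} alone does not give (e.g.\ for $\log W(n)=n\log n$ one has $\sum_{k\le n}k\log k=\tfrac12 n^2\log n-\tfrac14 n^2+O(n)$, smaller than $\tfrac12 n^2\log n$ by an unbounded amount). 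This does not affect the limiting identity, which only needs the asymptotic equivalence; it rather suggests that the displayed two-sided inequality is meant in the $(1+o(1))$ sense the paper itself adopts in Section~\ref{sec:expl_rates}. On your second delicate point, the minimum-principle step propagates zeros only through vertices where $f$ is superharmonic, i.e.\ within $B_r(x_0)^c$; invoking connectedness of $G$ is not enough, and without connectedness of $B_r(x_0)^c$ one cannot exclude $f$ vanishing on an entire component, so that argument as written is incomplete.
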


\begin{remark} \label{rem:ext_nnrw} 
Theorem \ref{th:upper_subharm_nnrw} can also be easily extended to a more general setting by considering $\widetilde P(x,y) := P(x,y)/M_1$ in the case when 
$$
M_1:=\sup_{x \in X} \sum_{y \in X} P(x,y) \in (1,\infty).
$$ 
This would lead to the upper bound of the form
 \begin{equation*}
f(x)\leqslant \left\|f\right\|_{\infty} \prod_{i=r+1}^{d(x,x_0)}\frac{M_1}{W(i)}, \quad x \in B_r(x_0)^c.
\end{equation*}
\end{remark}

\section{Applications} \label{sec4}

We provide a few applications of the presented estimates of functions which are  harmonic with respect to the Feynman-Kac operators.

\subsection{Decay of solutions to equations involving the graph Laplacians} \label{sec:graph_L} 
 By following the line of the papers dealing with graph Laplacians (see, e.g.\ \cite{Keller_Lenz} and further references in the monograph \cite{keller_lenz_wojciechowski_2021}), we impose the structure of a weighted graph over a given countable infinite space $X$ by considering a  kernel $b : X \times X \to [0,\infty]$ such that 
\begin{itemize}
\item[(i)] $b(x,x)=0$, for every $x \in X$;
\item[(ii)] $b(x,y)=b(y,x)$, for every $x,y \in X$;
\item[(iii)] $\sum_{y \in X} b(x,y)>0$, for every $x \in X$, and $\sup_{x \in X} \sum_{y \in X} b(x,y)<\infty$. 
\end{itemize}
Let $m : X \to (0,\infty)$ be a (strictly positive) measure on $X$. 
We additionally consider a function $V : X \to \R$ such that $\inf_{x\in X} V(x) > - \infty$. The \emph{graph Laplacian} $H$ is defined by
$$
H f (x) = \frac{1}{m(x)} \sum_{y \in X} b(x,y) \big(f(x)-f(y)\big) + V(x) f(x),
$$
for all functions $f \in F:=\{f:X \to \R: \sum_{y} b(x,y) |f(y)| < \infty, \ \text{for every} \ x \in X \}$.
The triple $(X,b,V)$ can be seen as a weighted graph over $X$ (two points $x,y \in X$ form an edge if and only if $b(x,y) >0$).

We first establish the relation between the operator $H$ and the discrete Feynman--Kac operator $\calU -I$. We set
\begin{align*}
b(x) = \sum_{y \in X} b(x,y), \qquad b^*:= \sup_{x \in X} b(x), \qquad P(x,y) = \frac{b(x,y)}{b^*},
\end{align*}
and
\begin{align*}
A = \left\{x \in X: m(x)V(x) + b(x) \leq 0 \right\},\quad
\widetilde V(x) = \begin{cases}
\frac{m(x)V(x)+b(x)}{b^*}, \quad  x \in A^c, \\
0 , \quad \ \ \ \ \ \ \ \ \ \ \ \ \ \ \ x \in A.
\end{cases} \normal
\end{align*}

We further assume that the operator $\calU$ is defined as in \eqref{def:calU} with a sub-probability kernel $P(x,y)$ and the potential $\widetilde V(x)$ defined  above. \

\begin{proposition} \label{prop:H_and_A} For every $f \in F$ and $x \in A^c$ we have
\begin{align} \label{eg:H_and_A}
H f(x) = - \left(V(x)+\frac{b(x)}{m(x)}\right) \ (\calU -I) f(x).
\end{align}
In particular, if $D \subset A^c$ and $f \in F$, then
$$
H f(x) \geq 0, \quad x \in D \qquad \Longleftrightarrow \qquad  (\calU -I) f(x) \leq 0, \quad x \in D.
$$
\end{proposition}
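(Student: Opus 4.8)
The plan is to verify the identity \eqref{eg:H_and_A} by a direct algebraic computation and then read off the ``in particular'' part from the sign of the multiplicative prefactor. First I would fix $x \in A^c$, so that by the definition of $A$ we have $m(x)V(x) + b(x) > 0$ and hence $\widetilde V(x) = (m(x)V(x)+b(x))/b^* > 0$; this makes $\calU f(x)$ well defined. Moreover, since $f \in F$ we have $\sum_{y} b(x,y)|f(y)| < \infty$, so all the series below converge absolutely and may be rearranged freely; recall also that $b(x,x)=0$, so no self-interaction term needs separate attention.

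Next I would expand $(\calU - I)f(x)$ using the definitions of $\calU$, of $P(x,y)=b(x,y)/b^*$, and of $\widetilde V(x)$ on $A^c$:
\begin{align*}
(\calU - I) f(x)
&= \frac{1}{\widetilde V(x)} \sum_{y \in X} P(x,y) f(y) - f(x)
= \frac{b^*}{m(x)V(x)+b(x)} \cdot \frac{1}{b^*}\sum_{y \in X} b(x,y) f(y) - f(x) \\
&= \frac{1}{m(x)V(x)+b(x)}\Big( \sum_{y \in X} b(x,y) f(y) - \big(m(x)V(x)+b(x)\big) f(x) \Big).
\end{align*}
Multiplying both sides by $-\big(V(x) + b(x)/m(x)\big) = -\big(m(x)V(x)+b(x)\big)/m(x)$, the factor $m(x)V(x)+b(x)$ cancels, and using $b(x)f(x) = \sum_{y} b(x,y) f(x)$ I would arrive at
\begin{align*}
-\Big(V(x)+\tfrac{b(x)}{m(x)}\Big)(\calU - I)f(x)
&= \frac{1}{m(x)}\Big( \big(m(x)V(x)+b(x)\big) f(x) - \sum_{y\in X} b(x,y) f(y)\Big) \\
&= V(x)f(x) + \frac{1}{m(x)} \sum_{y\in X} b(x,y)\big(f(x)-f(y)\big) = Hf(x),
\end{align*}
which is exactly \eqref{eg:H_and_A}.

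For the stated equivalence, observe that for $x \in A^c$ the prefactor $-\big(V(x)+b(x)/m(x)\big) = -\big(m(x)V(x)+b(x)\big)/m(x)$ is strictly negative, since $m(x)>0$ and $m(x)V(x)+b(x)>0$ there. Hence if $D \subset A^c$, then for each $x \in D$ the identity \eqref{eg:H_and_A} expresses $Hf(x)$ as a strictly negative multiple of $(\calU - I)f(x)$, so $Hf(x) \geq 0$ precisely when $(\calU - I)f(x) \leq 0$; taking this over all $x \in D$ gives the claim. I do not expect any real obstacle: the only points meriting a word of justification are the positivity of $\widetilde V$ on $A^c$ (so that $\calU$ is defined there) and the absolute convergence of the series involved (guaranteed by $f \in F$ together with property (iii) of $b$), both of which are immediate.
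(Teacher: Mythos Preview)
Your proof is correct and follows essentially the same approach as the paper: both are direct algebraic rearrangements linking $Hf(x)$ and $(\calU-I)f(x)$, with the paper starting from $Hf(x)$ and factoring out $V(x)+b(x)/m(x)$, while you start from $(\calU-I)f(x)$ and multiply through. Your explicit remarks on absolute convergence and on the positivity of $\widetilde V$ on $A^c$ are welcome but do not constitute a different method.
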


\begin{proof}
The proof of the first equality is based on a direct rearrangement. Indeed, for $x \in A^c$, 
\begin{align*} 
H f (x) & = \frac{1}{m(x)} \sum_{y \in X} b(x,y) \big(f(x)-f(y)\big) + V(x) f(x)\\
        & = \left(V(x)+\frac{b(x)}{m(x)}\right)  f(x) - \frac{b^*}{m(x)} \sum_{y \in X} \frac{b(x,y)}{b^*} f(y) \\
				& = \left(V(x)+\frac{b(x)}{m(x)}\right) \left(f(x) - \frac{b^*}{m(x)V(x) + b(x)} \sum_{y \in X} \frac{b(x,y)}{b^*} f(y)\right) \\
				& = \left(V(x)+\frac{b(x)}{m(x)}\right)  \left(f(x) - \frac{b^*}{m(x)V(x) + b(x)} \sum_{y \in X} \frac{b(x,y)}{b^*} f(y)\right).
\end{align*}
This is exactly \eqref{eg:H_and_A}. The second assertion follows directly from \eqref{eg:H_and_A} and the definition of the set $A$.
\end{proof}

It follows that Theorem \ref{thm:main_1} and Theorem \ref{th:upper_subharm_nnrw} can be effectively used to get an upper bound for subsolutions to the equation $H f =0$ (i.e.\ for $f$ such that $Hf \leq 0$) outside of a finite set.
Moreover, Proposition \ref{prop:H_and_A} implies that $H$ and $\calU$ have the same harmonic functions in subsets of $A^c$ (to see this we apply Proposition \ref{prop:H_and_A} to $f$ and $-f$).
This simple observation allows one to reduce the study of properties of functions harmonic with respect to $H$ to those which are harmonic with respect to the Feynman--Kac operators.
By combining this fact with results of Section \ref{sec:gen_est}, we obtain the following estimates for the solutions to the equation $H f =0$ in infinite sebsets of $A^c$. 

We note that when $m(x)V(x)+b(x)$ is positive for every $x \in X$ (that is, $A^c = X$), then \eqref{eg:H_and_A} can be seen as a standard change of measure procedure. 
A similar approach has been used recently by Fischer and Keller in \cite[p.\ 16]{Fischer_Keller}.

Observe that if $V$ is a confining potential in the sense of \eqref{B} and $\inf_{x \in X} m(x) >0$, then also $\widetilde V(x)$ is a confining potential and $A$ is at most finite.

\begin{corollary} \label{thm:main_H}\textup{\textbf{(DSP case)}}
Suppose that $b(x,y)$, $m(x)$ and $V(x)$ are as above. Assume that $V$ satisfies \eqref{B}, $\inf_{x \in X} m(x) >0$ and that
$$
b(x,y) > 0, \quad \mathrm{and}\quad  \sup_{x,y \in X} \sum_{z \in X}\frac{b(x,z)b(z,y)}{b(x,y)} < \infty. 
$$
Let $D \subset X$ and let $f$ be a bounded solution to the equation $H f (x) = 0$, $x \in D$. Then the following assertions hold. 
\begin{itemize}
\item[(1)]  There exists a finite set $B_0 \subset X$ (independent of $m$, $D$ and $f$) with $B_0 \supseteq A$ such that for any finite set $B \subset X$ with $B \supseteq B_0$ there exists a constant $C >0$ (independent of $V$, $m$, $D$ and $f$) such that
$$
|f(x)| \leq C \frac{b(x,x_0)}{m(x)V(x) + b(x)}\sum_{y \in B}|f(y)|, \quad x \in D \cap B^c, \ x_0 \in B,
$$
whenever $f(x) = 0$ for $x \in D^c \cap B^c$;
\item[(2)] If, in addition, $f$ is non-negative, then for any finite set $B \subset X$ with $B \supseteq B_0$ there exists a constant $\widetilde C >0$ (independent of $V$, $m$, $D$ and $f$) such that
$$
f(x) \geq \widetilde C  \frac{b(x,x_0)}{m(x)V(x) + b(x)}\sum_{y \in B}f(y), \quad x \in D \cap B^c, \  x_0 \in B.
$$
In particular, the \textbf{uniform Boundary Harnack Inequality at infinity} holds (cf.\ Corollary \ref{cor:BHI_infty}).
\end{itemize}
\end{corollary}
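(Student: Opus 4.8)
The plan is to deduce both parts from the results of Section~\ref{sec:gen_est} via the dictionary between $H$ and $\calU-I$ furnished by Proposition~\ref{prop:H_and_A}. First I would record the structural facts: with $P(x,y)=b(x,y)/b^*$ as in the paragraph preceding Proposition~\ref{prop:H_and_A}, condition (iii) gives $\sum_y P(x,y)=b(x)/b^*\le 1$, so $P$ is a sub-probability kernel, and it is everywhere positive since $b(x,y)>0$. Writing $K:=\sup_{x,y}\sum_z b(x,z)b(z,y)/b(x,y)<\infty$, one has $P_2(x,y)=(b^*)^{-2}\sum_z b(x,z)b(z,y)\le (K/b^*)P(x,y)$, so $P$ satisfies assumption~\eqref{A} with $C_*=K/b^*$. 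Since $V$ is confining and $\inf_x m(x)>0$, the potential $\widetilde V$ is confining and $A$ is finite (the observation stated just before the corollary); I would fix $B_0$ as in \eqref{choice_r_0} for this $\widetilde V$ and enlarge it so that $B_0\supseteq A$. Then for every finite $B\supseteq B_0$ we have $B^c\subseteq A^c$, so on $B^c$ the identity \eqref{eg:H_and_A} is available, and applying Proposition~\ref{prop:H_and_A} to $f$ and $-f$ shows that $Hf=0$ on $D\cap B^c$ forces $(\calU-I)f=0$ on $D\cap B^c$, for $\calU$ built from $P$ and $\widetilde V$; one also notes that $P(x,y)/\widetilde V(x)=b(x,y)/(m(x)V(x)+b(x))$ for $x\in B^c$.

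For part~(1) the signed function $f$ is not directly covered by Theorem~\ref{thm:main_1}, so I would pass to $|f|$ and check that it is nonnegative, bounded, and $(\calU-I)$-subharmonic in $B^c$. Indeed, on $D\cap B^c$ harmonicity gives $\calU f(x)=f(x)$, hence by the triangle inequality $\calU|f|(x)=\widetilde V(x)^{-1}\sum_y P(x,y)|f(y)|\ge\big|\widetilde V(x)^{-1}\sum_y P(x,y)f(y)\big|=|f(x)|$; on $D^c\cap B^c$ one has $|f(x)|=0\le\calU|f|(x)$. Applying the ``in particular'' form of Theorem~\ref{thm:main_1} to $|f|$ (kernel $P$, potential $\widetilde V$, finite set $B\supseteq B_0$) gives $|f(x)|\le C_2\overline{K}_B\,(P(x,x_0)/\widetilde V(x))\sum_{y\in B}|f(y)|$, and rewriting the prefactor via Step~1 yields the stated bound with $C=C_2\overline{K}_B$. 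Since the constant $C_2$ of Theorem~\ref{thm:main_1} does not depend on $V$, $f$, or $B$, and $\overline{K}_B$ depends only on $b$ (Lemma~\ref{lem:comparable}), one then verifies that $C$ and $B_0$ carry the uniformity asserted in the statement.

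For part~(2), when $f\ge 0$ it is in particular $(\calU-I)$-superharmonic in $D\cap A^c$, so Proposition~\ref{prop:lower_bound} applied with the set $D\cap A^c$ gives $f(x)\ge\underline{K}_B\,(P(x,x_0)/\widetilde V(x))\sum_{y\in B}f(y)$ for $x\in D\cap A^c$, hence for $x\in D\cap B^c$; rewriting the prefactor once more yields the lower bound with $\widetilde C=\underline{K}_B$. For the uniform Boundary Harnack Inequality at infinity I would simply invoke Corollary~\ref{cor:BHI_infty} for the operator $\calU-I$, applied with the set $D\cap A^c$ (on whose complement inside $B^c$ both $f$ and $g$ vanish, since $A\subseteq B$), which produces the two-sided ratio bound with constants built from $\underline{K}_B$, $\overline{K}_B$ and $C_2$.

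The step I expect to need the most care is not any individual estimate but the bookkeeping: one must keep track of the finite exceptional set $A$ so that Proposition~\ref{prop:H_and_A} is used only where it is valid and $\widetilde V$ is strictly positive (all conclusions concern $B^c\subseteq A^c$, so the value of $\widetilde V$ on $A$ is immaterial), and one must confirm that the constants and the set $B_0$ inherited from Section~\ref{sec:gen_est} indeed have the claimed independence of $V$, $m$, $D$ and $f$. The single genuinely new point is the elementary observation that $|f|$ is $(\calU-I)$-subharmonic in $B^c$ whenever $f$ is harmonic in $D\cap B^c$ and vanishes on $D^c\cap B^c$, which is what reduces the signed case in part~(1) to Theorem~\ref{thm:main_1}.
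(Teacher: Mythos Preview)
Your proposal is correct and follows essentially the same approach as the paper: translate to the Feynman--Kac operator via Proposition~\ref{prop:H_and_A}, observe that $|f|$ is $(\calU-I)$-subharmonic in $B^c$ (splitting into $D\cap B^c$ and $D^c\cap B^c$ exactly as you do), and then invoke Theorem~\ref{thm:main_1}, Proposition~\ref{prop:lower_bound} and Corollary~\ref{cor:BHI_infty}. The paper's own proof is a terse three-line version of precisely this argument; your additional verification that $P$ satisfies \eqref{A} with $C_*=K/b^*$ and your careful bookkeeping on $A$, $B_0$ and the constants merely spell out details the paper leaves implicit.
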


\begin{proof} 
First note that by Proposition \ref{prop:H_and_A} the function $f$ is $(\calU -I)$-harmonic in $D \cap A^c$. 
To justify the upper bound in (1) it is then enough to observe that $|f|$ is $(\calU -I)$-subharmonic in $B^c$ and apply Theorem \ref{thm:main_1}. The corresponding lower bound (2) follows from Proposition \ref{prop:lower_bound}. Finally, the sharp two-sided estimates lead to the uBHP at infinity as in Corollary \ref{cor:BHI_infty}.
\end{proof}

For simplicity we formulate the following result under the assumption that the functions $m(x)$ and $b(x)$ are constant. It is, however, not difficult to derive similar estimates for the case when $0 < \inf_{x \in X} m(x) \leq \sup_{x \in X} m(x) < \infty$ and $0 < \inf_{x \in X} b(x) \leq \sup_{x \in X} b(x) < \infty$. 

\begin{corollary} \label{thm:main_H_nnrw} \textup{\textbf{(Nearest-neighbour case)}}
Let $G = (X,E)$ be a graph such that assumption (\ref{C}) holds. Denote by $d(x,y)$ the (geodesic) graph distance in $X$. Suppose that $b(x,y)$ satisfy \eqref{eq:nnrw_def}, $m(x)$ is as above, and $V(x)$ satisfies assumption (\ref{D}) with some $x_0 \in X$ and a profile $W$. Moreover, assume that there are positive numbers $b_0, m_0$ such that $b(x)= b_0$ and $m(x) = m_0$ for all $x\in X$. 
Then the following assertions hold.
\begin{itemize}
\item[(1)] If $D \subset X$, $r \in \N$ is such that $A \subset B_{r}(x_0)$, and $f$ is a bounded solution to the equation $H f (x) = 0$, $x \in D$, such that $f(x) = 0$, $x \in D^c \cap B_{r}(x_0)^c$, then
$$
|f(x)| \leq \left\|f\right\|_{\infty} \prod_{i = r+1}^{d(x,x_0)} \frac{b_0}{b_0+m_0W(i)}, \quad x \in D \cap B_{r}(x_0)^c.
$$ 
\item[(2)] If, in addition, the kernel $b(x,y)$ satisfies \eqref{eq:nnrw_add_ass},
$f$ is non-negative and $D$ is geodesically convex, then there exists $C > 0$ (independent of $V$, $m$, $D$ and $f$) such that for every $x_r \in D$ with $d(x_0,x_r) = r \in \N$ we have
$$
f(x) \geq f(x_r) \prod_{i = r+1}^{d(x,x_0)} \frac{C}{b_0+m_0W(i)}, \quad x \in D \cap B_{r}(x_0)^c.
$$
\end{itemize}
\end{corollary}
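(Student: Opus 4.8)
The statement is obtained by combining Proposition~\ref{prop:H_and_A} with the nearest-neighbour estimates of Theorem~\ref{th:upper_subharm_nnrw} and Theorem~\ref{th:lower_est_nnrw}, and the plan is simply to make the relevant substitutions explicit. First I would record the normalisations. Since $b(x)=b_0$ for all $x$, we have $b^*=\sup_{x\in X}b(x)=b_0$, hence $P(x,y)=b(x,y)/b_0$; this kernel satisfies \eqref{eq:nnrw_def} because $b(x,y)$ does, and, under the extra hypothesis of part~(2), it satisfies \eqref{eq:nnrw_add_ass} with $M=b_0^{-1}\inf\{b(x,y):x\sim y\}>0$. Next, assumption \eqref{D} gives $V(x)=W(d(x_0,x))>0$, so $m(x)V(x)+b(x)=m_0W(d(x_0,x))+b_0\ge b_0>0$ for every $x$; consequently the exceptional set $A$ is empty, $A^c=X$ (so the requirement $A\subset B_r(x_0)$ in (1) is vacuous), and
$$
\widetilde V(x)=\frac{m_0V(x)+b(x)}{b^*}=\widetilde W\big(d(x_0,x)\big),\qquad \widetilde W(i):=\frac{m_0W(i)+b_0}{b_0},
$$
where $\widetilde W:\N_0\to(0,\infty)$ is increasing because $W$ is. Thus the Feynman--Kac operator $\calU-I$ associated with the nearest-neighbour kernel $P$ and the potential $\widetilde V$ satisfies assumption \eqref{D} with the same $x_0$ and profile $\widetilde W$, and, as $f$ is bounded, Proposition~\ref{prop:H_and_A} applies (the proportionality constant $V(x)+b(x)/m(x)$ is strictly positive): every solution of $Hf=0$ in $D$ is $(\calU-I)$-harmonic in $D$.

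For part~(1) I would first verify that $|f|$ is $(\calU-I)$-subharmonic on $B_r(x_0)^c$. On $D\cap B_r(x_0)^c$ harmonicity gives $f(x)=\calU f(x)$, hence $|f(x)|=|\calU f(x)|\le\calU|f|(x)$; on $D^c\cap B_r(x_0)^c$ the boundary hypothesis $f\equiv 0$ gives $|f|(x)=0\le\calU|f|(x)$, using $P\ge 0$ and $\widetilde V>0$. Since $|f|$ is moreover bounded and non-negative, Theorem~\ref{th:upper_subharm_nnrw}, applied with the kernel $P$ and the potential $\widetilde V$ (profile $\widetilde W$), yields
$$
|f(x)|\le\|f\|_\infty\prod_{i=r}^{d(x,x_0)}\frac{1}{\widetilde W(i)}=\|f\|_\infty\prod_{i=r}^{d(x,x_0)}\frac{b_0}{b_0+m_0W(i)},\qquad x\in D\cap B_r(x_0)^c,
$$
and discarding the factor $i=r$, which is $<1$, gives exactly the bound claimed in (1).

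For part~(2) the function $f$ is non-negative and $(\calU-I)$-harmonic, hence $(\calU-I)$-superharmonic, in $D$; moreover $D$ is connected (any geodesically convex set containing two points contains a geodesic between them) and $P$ satisfies \eqref{eq:nnrw_add_ass}. Theorem~\ref{th:lower_est_nnrw}, applied with the kernel $P$, the potential $\widetilde V$ (profile $\widetilde W$) and the constant $M=b_0^{-1}\inf\{b(x,y):x\sim y\}$, then gives, for $x\in D\cap B_r(x_0)^c$ and $x_r\in D$ lying on the geodesic path joining $x$ with $x_0$ at distance $r<d(x,x_0)$,
$$
f(x)\ge f(x_r)\prod_{i=r+1}^{d(x,x_0)}\frac{M}{\widetilde W(i)}=f(x_r)\prod_{i=r+1}^{d(x,x_0)}\frac{\inf\{b(x,y):x\sim y\}}{b_0+m_0W(i)},
$$
so $C:=\inf\{b(x,y):x\sim y\}>0$ works, and it depends on none of $V$, $m$, $D$, $f$ (the case $d(x,x_0)=r$ being trivial, with an empty product).

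The argument is largely bookkeeping. The two places needing a little care are: (i) in part~(1), the verification that $|f|$ is subharmonic off the ball, where the hypothesis $f\equiv 0$ on $D^c\cap B_r(x_0)^c$ is genuinely used; and (ii) in part~(2), confirming that the lower constant $C$ is independent of $m$ and $V$ — this holds because, after the normalisation $b^*=b_0$, the potential $\widetilde V$ absorbs all dependence on $m$ and $V$, while the constant $M$ in Theorem~\ref{th:lower_est_nnrw} is built only from $b$. A minor cosmetic point is that Theorem~\ref{th:upper_subharm_nnrw} produces a product starting at $i=r$ rather than $i=r+1$; since the discarded factor is $<1$, the form stated in (1) is the slightly weaker estimate obtained by dropping it.
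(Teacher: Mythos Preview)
Your proof is correct and follows the same approach as the paper's: reduce to the Feynman--Kac operator via Proposition~\ref{prop:H_and_A}, identify the profile $\widetilde W(i)=(m_0W(i)+b_0)/b_0$, and then invoke Theorems~\ref{th:upper_subharm_nnrw} and~\ref{th:lower_est_nnrw}. Your write-up is in fact more explicit than the paper's (which dispatches (2) with ``a similar argument''): you spell out why $|f|$ is $(\calU-I)$-subharmonic on all of $B_r(x_0)^c$ using the boundary condition on $D^c\cap B_r(x_0)^c$, you observe that assumption~\eqref{D} forces $A=\varnothing$ so the inclusion $A\subset B_r(x_0)$ is vacuous, and you catch the index shift between the product $\prod_{i=r}$ produced by Theorem~\ref{th:upper_subharm_nnrw} and the $\prod_{i=r+1}$ stated in the corollary.
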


\begin{remark}
When $b(x,y)$ is a probability kernel and $m$ is a counting measure (i.e.\ $m \equiv 1$), then $b_0 = m_0 = 1$ and the estimates in Corollary \ref{thm:main_H_nnrw} simplify and become sharper. 
\end{remark}

\begin{proof}[Proof of Corollary \ref{thm:main_H_nnrw}] 
By Proposition \ref{prop:H_and_A}, the function $f$ is $(\calU -I)$-harmonic in $D \cap A^c$. 
It follows that the function $|f|$ is $(\calU -I)$-subharmonic in $B_r(x_0)^c$.
The potential $\widetilde V$ satisfies assumption \eqref{D} with the profile $(m_0W + b_0)/b_0$ and the same $x_0 \in X$. Therefore we get the upper bound for $|f|$ by employing Theorem \ref{th:upper_subharm_nnrw}.

The lower estimate in (2) follows from Theorem \ref{th:lower_est_nnrw} by a similar argument. 
\end{proof}

\begin{remark} \label{rem:boundedness}
In Corollary \ref{thm:main_H} and Corollary \ref{thm:main_H_nnrw} (and in the results of Section \ref{sec:gen_est}) we assume that the function $f$ is bounded. This assumption is not  restrictive in the context of applications to graph Laplacians. For example, it is evident that if the measure $m$ satisfies the condition $\inf_{x \in X} m(x) > 0$ then every function $f \in \ell^p(X,m)$, $1 \leq p < \infty$ is bounded. 
\end{remark}

\begin{remark}
In the above corollaries we did not assume that $b(x,y)$ is symmetric.
If, however, $b(x,y)$ is symmetric, $\inf_{x \in X} m(x) >0$ and $V(x)$ is a confining potential, then $H$ is an  unbounded self-adjoint operator on $\ell^2(X,m)$ with the dense domain $D(H) = \{f \in \ell^2(X,m): Hf \in \ell^2(X,m)\}$ \cite{Keller_Lenz}. In this particular case, the obtained results seem to be of special interest as such operators serve as \emph{Hamiltonians} in \emph{discrete models of quantum oscillators} \cite{Chalbaud_1986, Gallinar_Chalbaud, Mattis}.
Since $H$ is self-adjoint, its spectrum is real and it consists of a countable set of eigenvalues of finite multiplicities; this sequence has no limit points and diverges to infinity. Eigenfunctions and
eigenvalues of the Schr\"odinger operator $H$  are called \emph{energy eigenstates} and \emph{energy levels} of the system. The eigenvalue $\lambda_0$ which lies at the bottom of the spectrum of $H$ has multiplicity 1; it describes the energy of the quantum system in the so-called \emph{ground state}. The respective eigenfunction $\psi_0 \in \ell^2(X,m)$ is positive. In general, if $\psi$ is a normalized eigenfunction of the operator $H$, then $|\psi(x)|^2$ is the density of the probability distribution of the position of a particle in a quantum state respective to $\psi$. Therefore, the knowledge of the fall-off rates of $\psi$ provides an information about the localization of the particle in a configuration space. 
\end{remark}

We finally  show how one can apply our results to describe the decay rate of eigenfunctions of operator $H$ outside of a finite set. 
Let $V$ be a confining potential and let $\psi \in \ell^2(X,m)$ be an eigenfunction of the operator $H$ corresponding to the eigenvalue $\lambda \in \mathbb R$. It follows that
\begin{equation} \label{osc}
H_{\lambda} \psi = 0, 
\end{equation}
where
$$
H_{\lambda} f (x) = \frac{1}{m(x)} \sum_{y \in X} b(x,y) \big(f(x)-f(y)\big) + V_{\lambda}(x) f(x), \quad \text{with} \quad V_{\lambda}(x):=V(x)-\lambda.
$$
Note that under the assumption that $\inf_{x \in X} m(x) >0$ the eigenfunction $\psi$ is bounded on $X$ (see Remark \ref{rem:boundedness}).
Take $D = A_{\lambda}^c$, where
$$
A_{\lambda}=\left\{x \in X: V_\lambda(x) \leq - \frac{b(x)}{m(x)}\right\}.
$$
In this framework one can directly apply Corollary \ref{thm:main_H} and Corollary \ref{thm:main_H_nnrw} to obtain the upper bound for $|\psi(x)|$ outside of a finite set in the DSP and the nearest-neighbour case. For the ground state eigenfunction, i.e.\ $\lambda=\lambda_0$ and $\psi = \psi_0$, we also obtain the matching lower bound. It is remarkable that in the DSP case the decay of any eigenfunction of $H$ at infinity is dominated by that of the ground state eigenfunction $\psi_0$. 
Some concrete examples will be discussed in Section \ref{sec:expl_rates}.

\subsection{Estimates for eigenfunctions of discrete Feynman--Kac operators} \label{sec:est_ef_FK}

Estimates obtained in Section \ref{sec:gen_est} can be effectively used to investigate the spectral and analytic properties of discrete time Feynman-Kac semigroups. Recall that the semigroup $\{\calU_n: n \in \N_0\}$ consists of operators
$\calU_0 f = f $, $\calU_n f = \calU^{n} f$, for $n\geq 1$,
where
$$
\calU f(x) = \frac{1}{V(x)} \sum_{y \in X} P(x,y)f(y), \quad x \in X.
$$
Suppose we are given a positive measure $\mu$ on $X$ such that 
$$
\mathrm{(i)\ }\sup_{y \in X} \frac{\sum_{x \in X} \mu(x) P(x,y)}{\mu(y)} < \infty,\quad \mathrm{and}\qquad \mathrm{(ii)}\ \sup_{x,y \in X} \frac{P(x,y)}{\mu(y)} < \infty.
$$
Under condition (i), the operator $\calU$ is bounded in $\ell^p(X,\mu)$, for any $1 \leq p < \infty$ 
(observe that the $\calU$ is also bounded in $\ell^{\infty}(X,\mu)$ as $P(x,y)$ is a probability kernel and $\mu$ is a positive measure on $X$). Condition (ii) implies that the operator $\calU: \ell^p(X,\mu) \to \ell^{\infty}(X,\mu)$ is bounded for every $1 \leq p < \infty$. 

From now on we restrict our attention to the case of $\ell^2(X,\mu)$. We first show that under \eqref{B} the operator $\calU$ is compact in $\ell^2(X,\mu)$. Clearly, this property is inherited by all the semigroup operators $\calU_n$, $n \geq 1$. The following lemma seems to be a standard fact, but we give a short proof for reader's convenience.  

\begin{lemma} \label{lem:cpt}
Under assumption \eqref{B}, the operator $\calU$ is compact in $\ell^2(X,\mu)$.
\end{lemma}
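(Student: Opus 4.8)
The plan is to realise $\calU$ as the operator-norm limit of a sequence of finite-rank operators; compactness then follows because a norm limit of finite-rank operators is compact. For a finite set $B \subset X$ let $\Pi_B$ be the orthogonal projection on $\ell^2(X,\mu)$ that multiplies a function by the indicator $\I_B$, and put $\calU_B := \Pi_B\,\calU$. The operator $\calU$ is bounded on $\ell^2(X,\mu)$ under condition (i), and the range of $\Pi_B$ is the finite-dimensional space of functions supported in $B$; hence each $\calU_B$ is a finite-rank, and in particular compact, operator on $\ell^2(X,\mu)$.

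It remains to show that $\|\calU - \calU_B\| \to 0$ as $B$ exhausts $X$ along a suitable sequence, where $\calU - \calU_B = \Pi_{B^c}\calU$. For $f \in \ell^2(X,\mu)$ and $x \in B^c$, the Cauchy--Schwarz inequality applied to the sub-probability measure $P(x,\cdot)$ gives $\big(\sum_{y} P(x,y) f(y)\big)^2 \le \big(\sum_y P(x,y)\big)\big(\sum_y P(x,y) f(y)^2\big) \le \sum_y P(x,y) f(y)^2$, so that
$$\|(\calU-\calU_B) f\|_{\ell^2(X,\mu)}^2 = \sum_{x \in B^c} \frac{\mu(x)}{V(x)^2}\Big(\sum_{y\in X} P(x,y) f(y)\Big)^2 \le \sum_{x \in B^c} \frac{\mu(x)}{V(x)^2}\sum_{y\in X} P(x,y) f(y)^2.$$
Now fix $M>0$ and use assumption \eqref{B} to choose $B = B_M$ with $V(x) \ge M$ for all $x \in B_M^c$. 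Bounding $V(x)^{-2} \le M^{-2}$ on $B_M^c$, enlarging the outer sum to all of $X$, interchanging the (nonnegative) double sum by Tonelli's theorem, and invoking condition (i) in the form $\sum_{x \in X}\mu(x) P(x,y) \le c\,\mu(y)$ with $c := \sup_{y\in X} \mu(y)^{-1}\sum_{x\in X}\mu(x)P(x,y) < \infty$, we obtain
$$\|(\calU - \calU_{B_M}) f\|_{\ell^2(X,\mu)}^2 \le \frac{1}{M^2}\sum_{y \in X} f(y)^2 \sum_{x\in X}\mu(x)P(x,y) \le \frac{c}{M^2}\,\|f\|_{\ell^2(X,\mu)}^2.$$
Thus $\|\calU - \calU_{B_M}\| \le \sqrt{c}/M$, which tends to $0$ as $M \to \infty$.

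Consequently $\calU$ is a norm limit of finite-rank operators and hence compact; since the compact operators form a two-sided ideal, $\calU_n = \calU^n$ is compact for every $n \ge 1$ as well. There is no essential difficulty in this argument --- the only points needing a word of care are that $\calU$ indeed maps $\ell^2(X,\mu)$ into itself (so that $\calU_B$ is well defined and genuinely finite rank) and that the reordering of the double sum is legitimate, which holds because every term is nonnegative. If one prefers to avoid quoting the boundedness of $\calU$, one may instead approximate by the operators with the honestly finite kernel $\I_B(x)\I_B(y)\,P(x,y)/V(x)$ and control the remaining pieces using conditions (i) and (ii); the truncation used above is, however, the most economical.
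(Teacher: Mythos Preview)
Your proof is correct and essentially identical to the paper's own argument: both approximate $\calU$ by the finite-rank truncations $\I_{B_M}\calU$, use assumption \eqref{B} to extract the factor $M^{-2}$ from $1/V(x)^2$ on $B_M^c$, and then apply Cauchy--Schwarz (with $P(x,\cdot)$ a sub-probability measure) together with Tonelli and condition~(i) to bound the remainder by $c\,M^{-2}\|f\|_2^2$. The only cosmetic difference is that the paper indexes the truncating sets as $B_k$ with $V\ge k$ on $B_k^c$, whereas you write $B_M$; the estimates and constants are the same.
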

\begin{proof}
We define the following sequence of finite-rank operators
\begin{align*}
\calU^{(k)} f(x) = \mathbf{1}_{B_k}(x)\frac{1}{V(x)}\sum_{y} P(x,y) f(y), \quad k \in \N,
\end{align*}
where $\left\{B_k\right\}_{k \in \N}$ is a family of finite subsets of $X$ such that $V(x) \geq k$ for $x \in B_k^c$, see \eqref{B}. 
We aim to prove that $\calU^{(k)} $ converges to $\calU$ in the operator norm. This will imply the desired compactness of $\calU$ as any finite-rank operator is compact. The Cauchy--Schwarz inequality combined with Tonelli's theorem yield
\begin{align*}
\Vert (\calU-\calU^{(k)})f\Vert_2^2 &= \sum_{x\notin B_k}\left(\frac{1}{V(x)}\right)^2 \Big| \sum_y f(y) P(x,y)\Big|^2\mu (x) \\
&\leq \left(\frac{1}{\inf_{x\notin B_k}V(x)}\right)^2 \sum_y  \sum_{x}\frac{\mu(x)P(x,y)}{\mu(y)}|f(y)|^2 \mu(y)\\
 & \leq \frac{1}{k^2} \left(\sup_{y \in X} \frac{\sum_{x} \mu(x)P(x,y)}{\mu(y)} \right) \Vert f\Vert_2^2.
\end{align*}
By (i), the last expression converges to zero as $k \to \infty$ and the result follows.
\end{proof}

\begin{remark} \label{rem:dual_disc}
We do not assume that the kernel $P(x,y)$ is symmetric. In consequence, the operator $\calU$ need not be self-adjoint in $\ell^2(X,\mu)$. The duality issue will be discussed in Section \ref{sec:adjoint_F-K}. 
\end{remark}

We deduce that the spectrum of the operator $\calU$ (excluding zero) consists solely of eigenvalues.  Moreover, by Jentzsch theorem \cite[Theorem V.6.6.]{Schaefer}, the spectral radius of $\calU$ is an eigenvalue, which we denote by $\lambda_0>0$, and the corresponding  eigenfunction $\psi_0$ is strictly positive on $X$. 

Let $\lambda \in \mathbb C$, $\lambda \neq 0$ be an eigenvalue of the operator $\calU$ and let $\psi \in \ell^2(X,\mu)$ be the corresponding eigenfunction, i.e. 
$
\calU \psi = \lambda \psi.
$
We then have
$
|\lambda| |\psi| = |\calU \psi| \leq \calU |\psi|
$, which implies $|\psi| \leq \calU^{\lambda} |\psi|$,
where
$$
\calU^{\lambda} f(x) = \frac{1}{V_{\lambda}(x)} \sum_{y \in X} P(x,y) |\psi(y)|, \quad \text{with} \ V_{\lambda}:= |\lambda|V.
$$
In particular, 
$(\calU^{\lambda} - I) |\psi|(x) \geq 0$, $x \in X$,
i.e.\ the non-negative function $\varphi:= |\psi|$ is $(\calU^{\lambda} - I)$-subharmonic in $X$. We show similarly that the positive function $\psi_0$ is $(\calU^{\lambda} - I)$-harmonic.

After this preparation we can apply Theorem \ref{thm:main_1} and Theorem \ref{th:upper_subharm_nnrw}  to obtain an upper bound for $|\psi|$ outside of a finite set in the DSP and the nearest-neighbour case, respectively. By Proposition \ref{prop:lower_bound} and Theorem \ref{th:lower_est_nnrw}, we can also find the matching lower bound for the positive eigenfunction $\psi_0$ in this two cases. 

Our main contribution here is that we can find sharp two-sided bounds for $\psi_0$ outside of a finite set. As we mentioned in the introduction, we apply this result in our ongoing work  to investigate the asymptotic behaviour of the kernel of the operator $\calU_n$.

\subsection{Conjugate Feynman--Kac semigroups and the duality issue} \label{sec:adjoint_F-K}

We close this section with a short discussion concerning the definition of the discrete Feynman--Kac operators and the duality issue.   

As we mentioned in the introduction, for a given sub-probability kernel $P$ and a potential $V$ the corresponding discrete time Feynman--Kac semigroup can be defined in an alternative way. More precisely, we consider a semigroup $\{\calW_n: n \in \N_0\}$ consisting of operators given by
\begin{align} \label{eq:mult_fun_W}
\calW_0 g = g, \quad \calW_n g(x) = \ex^x \bigg[\prod_{k=1}^{n}\frac{1}{V(Y_k)} g(Y_n) \bigg], \quad n\geq 1
\end{align}
We have 
$\calW_n g = \calW^{n} g$, for $n\geq 1$, 
where $\calW^{n}$ denotes the $n$\textsuperscript{th} power of the operator
\begin{align*}
\calW g(x) =  \sum_{y \in X} P(x,y)(g(y)/V(y)), \quad x \in X.
\end{align*}
Observe that 
\begin{align} \label{eq:U-to-W}
V^{-1}\calW g =  \calU (V^{-1} g),
\end{align}
for all admissible functions $g$. We call $\{\calW_n: n \in \N_0\}$ the \emph{conjugate discrete time Feynman--Kac semigroup} to $\{\calU_n: n \in \N_0\}$.

\begin{remark}
\begin{enumerate}
\item[(1)] In view of identity \eqref{eq:U-to-W}
$g$ is $(\calW -I)$-harmonic (superharmonic, subharmonic) in $D$
if and only if 
$g/V$ is $(\calU -I)$-harmonic (superharmonic, subharmonic)  in $D$.
This allows us to apply all results obtained in Section \ref{sec:gen_est} to the operator $\calW -I$.

\item[(2)] If condition (i) in Section \ref{sec:est_ef_FK} is satisfied and the following reversibility relation
$$
\mu(x) P(x,y) = \mu(y)P(y,x), \quad x,y \in X,
$$
holds, then the operator $\calW$ is adjoint to $\calU$ in $\ell^2(X,\mu)$. Indeed, for every $f,g \in \ell^2(X,\mu)$, we have
\begin{align*}
\sum_{x \in X} g(x) \overline{\calU f(x)} \mu(x) & = \sum_{x \in X} \sum_{y \in X} \frac{g(x)}{V(x)} \mu(x)P(x,y) \overline{f(y)} \\
                                      & = \sum_{y \in X} \sum_{x \in X}  \frac{1}{V(x)} P(y,x) g(x)\overline{f(y)}  \mu(y) \\
																			& = \sum_{y \in X} \calW g(y) \overline{f(y)} \mu(y),
\end{align*}
by Fubini's theorem. Clearly, this extends to the  operators $\calU_n$ and $\calW_n$, $n \geq 1$.
We also observe that $\calU$ acts as a self-adjoint operator in the space $\ell^2(X,\mu_V)$, where $\mu_V(x) = V(x)\mu(x)$. Indeed,
for $f,g \in \ell^2(X,\mu_V)$ we have
\textbf{\begin{align*}
\sum_{x \in X} g(x) \overline{\calU f(x)} \mu_V(x) & = \sum_{x \in X} \sum_{y \in X} g(x) \mu(x)P(x,y) \overline{f(y)} \\
                                      & = \sum_{y \in X} \sum_{x \in X} g(x) \mu(y)P(y,x) \overline{f(y)}  \\
																			& = \sum_{y \in X} \sum_{x \in X}  \frac{1}{V(y)} P(y,x)g(x) \overline{f(y)}  \mu_V(y) \\
																			& = \sum_{y \in X} \calU g(y) \overline{f(y)} \mu_V(y).
\end{align*}}
\end{enumerate}
\end{remark}

\section{Markov chains with the DSP}\label{sec:DSP_chains}
In this section we present various methods which allow one to construct (sub-)probability kernels that satisfy the direct step property. We start with a few general examples on metric spaces. Next, we give more precise results for a class of discrete-time processes (constructed through subordination techniques) on infinite countable sets, including weighted graphs, and also discuss a class of chains defined on product spaces. Finally, we give a  few direct examples where we evaluate the decay rates which appear in the obtained estimates of harmonic functions.

\subsection{(Sub-)Markov kernels with the DSP on metric spaces} \label{sec:chains_metric}
Let $(X,d)$ be a countable metric space and let $P(x,y)$ be a given (sub-)probability kernel on $X$. In the first result we show that if the kernel depends on the distance through a function that satisfies an appropriate doubling condition then such a kernel fulfils assumption \eqref{A}.

\begin{proposition}\label{prop:doubling}
Let $P(x,y)$ be a (sub-)probability kernel such that
\begin{align}\label{p-J-condition}
P(x,y)\asymp J(d(x,y)),\quad x,y\in X,
\end{align}
for a non-increasing function $J:[0,\infty)\to (0,\infty)$ which satisfies the doubling condition
\begin{align}\label{doubling_J}
J(r)\leq C J(2r),\quad r>0.
\end{align}
Then the kernel $P(x,y)$ satisfies assumption \eqref{A}.
\end{proposition}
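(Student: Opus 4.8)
The plan is to expand $P_2(x,y)=\sum_{z\in X}P(x,z)P(z,y)$, replace both factors by $J(d(x,z))$ and $J(d(z,y))$ at the cost of the comparison constants in \eqref{p-J-condition} (write $c_1J\le P\le c_2J$ for these constants), and then estimate the resulting sum by the standard splitting of the state space according to which endpoint the summation point $z$ is far from. Positivity of $P$ in \eqref{A} is immediate since $J>0$. For the diagonal $x=y$ one checks directly that $P_2(x,x)\le\big(\sup_{z\in X}P(z,x)\big)\sum_{z}P(x,z)\le c_2J(0)\le (c_2/c_1)P(x,x)$ (using that $J$ is non-increasing and the sub-probability property), so from now on fix $x\ne y$ and set $R:=d(x,y)>0$.

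First I would split $X=A\cup A^c$ with $A:=\{z\in X:\ d(x,z)\ge R/2\}$. If $z\in A$, then monotonicity of $J$ together with a single application of the doubling condition \eqref{doubling_J} with $r=R/2$ gives $J(d(x,z))\le J(R/2)\le CJ(R)$. If $z\in A^c$, then the triangle inequality gives $d(z,y)\ge d(x,y)-d(x,z)>R/2$, hence again $J(d(z,y))\le J(R/2)\le CJ(R)$. Consequently
\begin{align*}
\sum_{z\in X}J(d(x,z))J(d(z,y))
&\le CJ(R)\sum_{z\in A}J(d(z,y))+CJ(R)\sum_{z\in A^c}J(d(x,z))\\
&\le CJ(R)\Big(\sum_{z\in X}J(d(z,y))+\sum_{z\in X}J(d(x,z))\Big).
\end{align*}

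It remains to bound the two full sums by an absolute constant. For $\sum_{z}J(d(x,z))$ this is immediate from \eqref{p-J-condition} and the sub-probability property, since $\sum_z J(d(x,z))\le c_1^{-1}\sum_z P(x,z)\le c_1^{-1}$. For $\sum_z J(d(z,y))$ the kernel is summed in the ``wrong'' variable and $P$ is not assumed symmetric, so here one exploits that $J\circ d$ \emph{is} symmetric: $\sum_z J(d(z,y))=\sum_z J(d(y,z))\le c_1^{-1}\sum_z P(y,z)\le c_1^{-1}$. Combining,
$$
P_2(x,y)\le c_2^2\sum_{z\in X}J(d(x,z))J(d(z,y))\le 2Cc_1^{-1}c_2^2\,J(R)\le 2Cc_1^{-2}c_2^2\,P(x,y),
$$
which is \eqref{eq:DSP} with $C_\ast=2Cc_1^{-2}c_2^2$; together with positivity this gives \eqref{A}.

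The argument is elementary, and there is no deep obstacle; the only point requiring a little care is the control of the reversed sum $\sum_z P(z,y)$ over the first coordinate, which the sub-probability hypothesis does not provide directly and which is recovered by passing through the symmetric majorant $J\circ d$. Everything else is the textbook ``split according to which endpoint $z$ is far from, then use doubling once'' scheme.
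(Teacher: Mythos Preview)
Your argument is correct and follows essentially the same approach as the paper: both reduce \eqref{eq:DSP} to bounding $\sum_z J(d(x,z))J(d(z,y))$ by a constant times $J(d(x,y))$ via the ``split according to which endpoint $z$ is far from, apply doubling once, then use the sub-probability bound'' scheme. Your two-region decomposition $\{d(x,z)\ge R/2\}\cup\{d(x,z)<R/2\}$ is in fact slightly more economical than the paper's (which first separates the exterior of the two balls of radius $R$ before halving the intersection), and you make explicit the symmetry-of-$J\circ d$ step needed for the reversed sum $\sum_z J(d(z,y))$, which the paper uses tacitly.
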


\begin{proof}
The kernel $P(x,y)$ is strictly positive by \eqref{p-J-condition}, so we only need to prove \eqref{eq:DSP}. We aim to show that there is a constant $c>0$ such that for all $x,y\in X$
\begin{align*}
\sum_{z\in X} J(d(x,z))J(d(z,y))\leq c J(d(x,y)).
\end{align*}
We split the sum according to the distance of $z$ to $x$ and $y$.
For $d(x,z)>d(x,y)$ we have by monotonicity that $J(d(x,z))\leq J(d(x,y))$ and thus \eqref{def:prob_kernel} combined with \eqref{p-J-condition} imply
\begin{align*}
\sum_{\left\{z:\, d(x,z)>d(x,y)\right\}} \! J(d(x,z))J(d(z,y))\leq c J(d(x,y)).
\end{align*}
We proceed similarly for $d(y,z)>d(x,y)$. If $\max\{d(x,z), d(z,y)\}\leq d(x,y)$ (this coincides with the shadowed area in Figure \ref{fig:balls}), we distinguish between two cases: either $d(x,z)\geq \frac{d(x,y)}{2}$ (region $II$ in Fig.\ \ref{fig:balls}), then $J(d(x,z))\leq J(\frac{d(x,y)}{2})\leq C J(d(x,y))$ by \eqref{doubling_J}; or $d(x,z)<\frac{d(x,y)}{2}$ (region $I$), then $d(z,y)\geq \frac{d(x,y)}{2}$ and we have $J(d(z,y))\leq J(\frac{d(x,y)}{2})\leq CJ(d(x,y))$. Hence, by employing \eqref{def:prob_kernel} and \eqref{p-J-condition} to each of the two cases we obtain the desired result.
\end{proof}
\begin{figure}[h]
\centering
\begin{tikzpicture}[scale=1.5]
\draw [black] (1,0) rectangle (7,3);
\path[fill=gray, opacity=0.5, postaction={fill=white, opacity=1, even odd rule, draw}] (3.5, 1.5) circle (1) (4.5, 1.5) circle (1);
\node at (3.3,1.5) {$x$};
\node at (4.7, 1.5) {$y$};
\draw [black]  (3.5,1.5) circle (1pt);
\draw [black]  (4.5,1.5) circle (1pt);

\tkzDefPoint(3.5,1.5){A}
\tkzDefPoint(4.5,1.5){B}

\tkzInterCC(A,B)(B,A) \tkzGetPoints{C}{X}
\tkzDrawSegment[dashed](C,X)

\node at (6.5, 0.2) {$X$};   
\node at (3.7, 1.7) {$I$};
\node at (4.2, 1.3) {$II$};


\end{tikzpicture}
\caption{Two intersecting balls of radius $d(x,y)$.}
\label{fig:balls}
\end{figure}
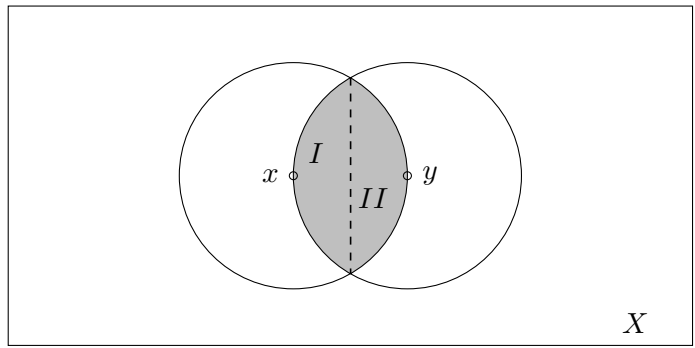

We remark that Proposition \ref{prop:doubling} is applicable to plenty of long range random walks. Prominent examples are random walks on the integer lattice with one-step transition kernel defined through a family of conductances that are comparable to the jump kernels of stable processes, see \cite{Bass_Levin}. Another important examples are random walks on uniformly discrete metric measure spaces studied in \cite{Murugan_Saloff-Coste} (see also \cite{Murugan_Saloff_2}) where the jump kernel is comparable to a regularly varying function (depending on the distance function) times a volume growth function which satisfy a doubling condition.

We also give a natural extension of the above result which covers many interesting examples of kernels $P(x,y)$ that decay faster at infinity than those in Proposition \ref{prop:doubling}. 

\begin{corollary} \label{cor:JandK}
Let $P(x,y)$ be a (sub-)probability kernel such that
\begin{align}\label{P-J-K-condition}
P(x,y)\asymp J(d(x,y)) K(d(x,y)),\quad x,y\in X,
\end{align}
where $J, K:[0,\infty)\to (0,\infty)$ are non-increasing functions such that 
$J$ satisfies \eqref{doubling_J} and $K$ is such that
\begin{align}\label{K-condition}
K(r)K(s) \leq \widetilde C K(r+s), \quad r,s >0.
\end{align}
Then the kernel $P(x,y)$ satisfies assumption \eqref{A}.
\end{corollary}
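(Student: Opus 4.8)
The plan is to mimic the proof of Proposition \ref{prop:doubling} almost verbatim, treating the factor $K$ as a controlled perturbation of the doubling function $J$. Positivity of $P$ is immediate from \eqref{P-J-K-condition}, so the whole task is to produce a constant $c>0$ with
$$
\sum_{z\in X} J(d(x,z))K(d(x,z))\,J(d(z,y))K(d(z,y)) \;\leq\; c\,J(d(x,y))K(d(x,y)),\qquad x,y\in X,
$$
which by \eqref{P-J-K-condition} is exactly the DSP inequality \eqref{eq:DSP}. As in Proposition \ref{prop:doubling} I would split the sum over $z$ into the three pieces $\{d(x,z)>d(x,y)\}$, $\{d(z,y)>d(x,y)\}$ and the ``central'' region $\{\max(d(x,z),d(z,y))\leq d(x,y)\}$, and bound each by a constant multiple of $J(d(x,y))K(d(x,y))$.

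The one genuinely new ingredient is a uniform bound on the product of the two $K$-factors: for any $z$, the triangle inequality $d(x,z)+d(z,y)\geq d(x,y)$ and monotonicity of $K$ give $K(d(x,z)+d(z,y))\leq K(d(x,y))$, whence \eqref{K-condition} yields
$$
K(d(x,z))K(d(z,y)) \;\leq\; \widetilde C\,K(d(x,z)+d(z,y)) \;\leq\; \widetilde C\,K(d(x,y)).
$$
In the two ``far'' regions this already suffices: on $\{d(x,z)>d(x,y)\}$, monotonicity of both $J$ and $K$ gives $J(d(x,z))K(d(x,z))\leq J(d(x,y))K(d(x,y))$, i.e.\ $P(x,z)\leq c'P(x,y)$ by \eqref{P-J-K-condition}, and summing the remaining factor over all $z$ --- using $\sum_z P(z,y)\asymp\sum_z P(y,z)\leq 1$, which follows from the symmetry of $d$ together with \eqref{P-J-K-condition} and \eqref{def:prob_kernel}, just as in Proposition \ref{prop:doubling} --- bounds this piece by $c''J(d(x,y))K(d(x,y))$; the region $\{d(z,y)>d(x,y)\}$ is symmetric. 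In the central region one of $d(x,z),d(z,y)$ is at least $d(x,y)/2$, say $d(x,z)\geq d(x,y)/2$ (the other case symmetric), and doubling \eqref{doubling_J} gives $J(d(x,z))\leq J(d(x,y)/2)\leq C\,J(d(x,y))$.

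The central region is where I expect the real work to be, and it is the step I would scrutinise most carefully. After extracting $C\,J(d(x,y))$ from $J(d(x,z))$, one must estimate $\sum_z K(d(x,z))\,J(d(z,y))K(d(z,y))$ over that region, uniformly in $x,y$. The tension is that $J$ by itself need not be summable --- which is exactly why $K$ is present --- so one cannot discard $K(d(z,y))$ and then bound $\sum_z J(d(z,y))$; the residual factor has to be kept in the form $J(d(z,y))K(d(z,y))\asymp P(z,y)$, so that the tail sum collapses to $\lesssim\sum_z P(z,y)\lesssim 1$, while the \emph{only} $K$-mass then left to reproduce $K(d(x,y))$ is $K(d(x,z))$, for which \eqref{K-condition} (with $r=s=d(x,y)/2$) supplies only $K(d(x,y)/2)^2\leq\widetilde C\,K(d(x,y))$. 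Turning this into a clean bound by $J(d(x,y))K(d(x,y))$ is the crux: it is the one point where the doubling of $J$ and condition \eqref{K-condition} must be combined, and I would expect to need either a finer dyadic decomposition of the central region or a sharper application of \eqref{K-condition}. The far regions and the surrounding bookkeeping are routine once the product bound $K(d(x,z))K(d(z,y))\leq\widetilde C\,K(d(x,y))$ is established.
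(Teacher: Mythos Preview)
Your key observation --- the product bound $K(d(x,z))K(d(z,y))\leq\widetilde C\,K(d(x,y))$ obtained from \eqref{K-condition}, monotonicity of $K$, and the triangle inequality --- is exactly the paper's starting point. The difference is that the paper uses it \emph{globally}, not region by region: once this bound is available for every $z$, one factors $\widetilde C\,K(d(x,y))$ out of the entire sum at once, leaving
\[
\sum_{z\in X} J(d(x,z))\,J(d(z,y)),
\]
and then invokes Proposition~\ref{prop:doubling} as a black box to bound this by $c\,J(d(x,y))$. There is no need to redo the three-region split for $JK$; the whole argument is two lines. What you identify as the ``crux'' in the central region --- combining doubling of $J$ with \eqref{K-condition} --- is precisely this global factorisation followed by the Proposition~\ref{prop:doubling} argument, and no finer dyadic decomposition is needed.

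Your instinct that something is delicate here is correct, though. The appeal to Proposition~\ref{prop:doubling} tacitly uses that $\sup_x\sum_y J(d(x,y))<\infty$, i.e.\ that $J$ alone is (comparable to) a sub-probability profile; this is exactly the hypothesis of that proposition, and it is what makes the bare $J$-sum finite. Your attempt to sidestep this by keeping $J(d(z,y))K(d(z,y))\asymp P(z,y)$ together cannot succeed, because then only $K(d(x,z))$ remains and there is no uniform bound $K(r/2)\leq c\,K(r)$. In fact the statement is false without summability of $J$: on $X=\Z$ with $J\equiv 1$ and $K(r)=e^{-r}$ one has $P(x,y)\asymp e^{-|x-y|}$, all hypotheses of the corollary hold, but $P_2(0,n)\gtrsim n\,e^{-n}$ while $P(0,n)\asymp e^{-n}$, so the DSP fails. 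Thus the summability of $J$ is a hidden hypothesis (supplied implicitly through Proposition~\ref{prop:doubling}), not a difficulty to be overcome by a sharper estimate.
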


\begin{proof}
We only need to show \eqref{eq:DSP}. By \eqref{K-condition} and using the monotonicity of the function $K$ and the triangle inequality we obtain
$$
K(d(x,z))K(d(z,y)) \leq \widetilde C K(d(x,z)+d(z,y)) \leq \widetilde C K(d(x,y)), \quad x,y,z \in X.
$$
It then follows from Proposition \ref{prop:doubling} that for all $x,y\in X$,
\begin{align*}
\sum_{z\in X} K(d(x,z))&J(d(x,z))K(d(z,y))J(d(z,y)) \\
& \leq \widetilde C K(d(x,y)) \sum_{z\in X} J(d(x,z))J(d(z,y))\leq c K(d(x,y)) J(d(x,y)).
\end{align*}
Together with \eqref{P-J-K-condition} this implies \eqref{eq:DSP}. 
\end{proof}
Typical examples of profiles $J$ and $K$ satisfying the assumptions of Proposition \ref{prop:doubling} and Corollary \ref{cor:JandK} are: $J(r) = (1 \vee r)^{-\gamma}$, or $J(r) = (1 \vee r)^{-\gamma} \log(2+r)^{\delta}$, for appropriate $\gamma >0$ and $\delta \in \R$ (which depend on the geometry of the space $X$), and $K(r) = e^{-\theta r^{\beta}}$, $\theta>0$, $\beta \in (0,1]$.  

\subsection{Subordinate Markov chains} \label{sec:chains_subordinate}
In this paragraph we consider a specific class of Markov chains which enjoy the DSP property and are obtained via a random change of time procedure. 
We start with a straightforward but at the same time very fruitful observation that the DSP property is stable under a random change of time. For this, let $\{Z_n: n \geq 0\}$ be an arbitrary time-homogeneous Markov chain with values in $X$ and 
let $\{\tau_n:n \geq 0 \}$ be an arbitrary increasing random walk starting at 0 with values in $\N_0$ and which is independent of $\{Z_n: n \geq 0\}$ (by saying that it is a random walk we mean that $\tau_{n+1}-\tau_n$, $n=0,1,2.\ldots$ are i.i.d.\ random variables). 
The \emph{subordinate Markov chain} $\{Y_n: n \geq 0\}$
is then defined as 
$$Y_n:= Z_{\tau_n}, \quad n=0,1,2,\ldots . $$ 
It is straightforward to check that the process $\{Y_n: n \geq 0\}$ is indeed a time-homogeneous Markov chain.  

\begin{lemma}\label{lem:subord} 
 Suppose that $\{\tau_n:n \geq 0 \}$ satisfies the DSP, \normal that is 
\begin{align}\label{eq:DSP_N} 
\bbP(\tau_2 = n)  \leq C \bbP(\tau_1 = n), \quad n = 2,3,\ldots,
\end{align}
for a constant $C>0$. Then the chain $\{Y_n: n \geq 0\}$ fulfils \eqref{eq:DSP} with the same constant $C$.
\end{lemma}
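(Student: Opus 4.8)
The plan is to strip off the subordinating walk by conditioning on it and exploiting its independence from $\{Z_n\}$, so that the assertion collapses to the hypothesis \eqref{eq:DSP_N}. Write $Q_n(x,y) := \bbP^x(Z_n = y)$ for the $n$-step transition probabilities of the base chain $\{Z_n\}$, and $p_k := \bbP(\tau_1 = k)$ for the increment law. Since $\{\tau_n\}$ is an increasing $\N_0$-valued walk started at $0$, its i.i.d.\ increments are at least $1$, so $\tau_1 \geq 1$ and $\tau_2 \geq 2$ almost surely; this is precisely the reason the hypothesis need only be posited for $n \geq 2$. Because $Y_0 = Z_{\tau_0} = Z_0$, starting $\{Y_n\}$ at $x$ is the same as starting $\{Z_n\}$ at $x$. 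Conditioning on the value of $\tau_1$ (respectively $\tau_2$) and using the independence of $\{\tau_n\}$ and $\{Z_n\}$ together with the tower property, one obtains for the one- and two-step transition kernels of $\{Y_n\}$
\begin{align*}
P(x,y) &:= \bbP^x(Y_1 = y) = \bbP^x(Z_{\tau_1} = y) = \sum_{k \geq 1} p_k\, Q_k(x,y), \\
P_2(x,y) &:= \bbP^x(Y_2 = y) = \bbP^x(Z_{\tau_2} = y) = \sum_{n \geq 2} \bbP(\tau_2 = n)\, Q_n(x,y).
\end{align*}
Every term here is non-negative, so the interchange of summations is harmless, and $P_2(x,y)$ is genuinely the two-step transition probability of the Markov chain $\{Y_n\}$ that appears in \eqref{eq:DSP}.

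The second step is simply to plug in the hypothesis. By \eqref{eq:DSP_N}, $\bbP(\tau_2 = n) \leq C\, p_n$ for every $n \geq 2$, hence, comparing the series term by term,
$$
P_2(x,y) \;\leq\; C \sum_{n \geq 2} p_n\, Q_n(x,y) \;\leq\; C \sum_{k \geq 1} p_k\, Q_k(x,y) \;=\; C\, P(x,y), \qquad x,y \in X,
$$
where the middle inequality merely restores the non-negative $k = 1$ summand. This is exactly \eqref{eq:DSP} for $\{Y_n\}$ with constant $C_* = C$.

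I do not expect any serious obstacle. The only points that deserve an explicit line are (i) the conditioning identity $\bbP^x(Z_{\tau_j} = y) = \sum_{n} \bbP(\tau_j = n)\, Q_n(x,y)$, which is immediate from the independence of $\{\tau_n\}$ and $\{Z_n\}$, and (ii) the fact that the sums may be freely rearranged because all summands are non-negative. I would also note explicitly that the lemma asserts only the inequality \eqref{eq:DSP}: positivity of $P(x,y)$ for all $x,y \in X$, the remaining half of assumption \eqref{A}, is an irreducibility-type statement about $\{Z_n\}$ and $\{\tau_n\}$ and is taken up separately in the corollaries following the lemma.
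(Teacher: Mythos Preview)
Your proof is correct and follows essentially the same approach as the paper: condition on the value of the subordinator, use independence to write $P_2(x,y)=\sum_n \bbP(\tau_2=n)Q_n(x,y)$, and apply \eqref{eq:DSP_N} term by term. The paper's argument is terser but identical in substance, and your closing remark that the lemma establishes only \eqref{eq:DSP} (not the positivity part of \eqref{A}) is accurate and matches the paper's structure.
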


\begin{proof}
Since $\bbP(\tau_2=1)=0$, for any $x,y \in X$ we have
\begin{align*}
\bbP (Y_2=y\mid Y_0=x) & = \sum_{k=1}^\infty \bbP (Z_k=y\mid Z_0=x)\bbP(\tau_2=k)\\
&\leq C \sum_{k=1}^\infty \bbP (Z_k=y\mid Z_0=x)\bbP(\tau_1=k) = C \bbP (Y_1=y\mid Y_0=x),
\end{align*}
as desired.
\end{proof}

With this result at hand one can effectively construct examples of Markov chains in $X$ that satisfy \eqref{eq:DSP} through choosing a random change of time process $\{\tau_n:n \geq 0 \}$ which satisfies \eqref{eq:DSP_N}. This observation provides an easy-to-check sufficient condition for assumption \eqref{A}.

\begin{corollary} \label{cor:sufficient}
If $\{Z_n: n \geq 0\}$ is irreducible, $\{\tau_n:n \geq 0 \}$ is such that \eqref{eq:DSP_N} holds and there exists $n_0 \in \N$ such that 
\begin{align}\label{eq:tau_pos}
\bbP(\tau_1=n)>0, \quad n \geq n_0,
\end{align}
then the subordinate chain $\{Y_n: n \geq 0\}$ satisfies assumption \eqref{A}.
\end{corollary}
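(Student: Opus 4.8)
The plan is to deduce Corollary \ref{cor:sufficient} from Lemma \ref{lem:subord} by checking that, under the stated hypotheses, the subordinate chain $\{Y_n : n \geq 0\}$ has a strictly positive one-step kernel; the DSP inequality itself is then immediate from Lemma \ref{lem:subord}. Recall that by the remark following assumption \eqref{A}, once the DSP \eqref{eq:DSP} holds, strict positivity of the kernel is equivalent to irreducibility of the chain, so it will in fact suffice to verify either of these two equivalent conditions for $\{Y_n\}$.

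First I would observe that Lemma \ref{lem:subord} applies verbatim: the hypothesis \eqref{eq:DSP_N} is assumed, and the construction $Y_n = Z_{\tau_n}$ with $\{\tau_n\}$ an increasing random walk independent of $\{Z_n\}$ is exactly the setting of that lemma. Hence $\{Y_n\}$ satisfies \eqref{eq:DSP} with the same constant $C$. It remains to show $P(x,y) := \bbP(Y_1 = y \mid Y_0 = x) > 0$ for all $x,y \in X$. Writing the one-step kernel of the subordinate chain by conditioning on $\tau_1$,
$$
\bbP(Y_1 = y \mid Y_0 = x) = \sum_{k=1}^{\infty} \bbP(Z_k = y \mid Z_0 = x)\, \bbP(\tau_1 = k) \geq \sum_{k \geq n_0} \bbP(Z_k = y \mid Z_0 = x)\, \bbP(\tau_1 = k),
$$
where I used \eqref{eq:tau_pos} to restrict to $k \geq n_0$ and drop the other terms. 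By irreducibility of $\{Z_n\}$ there is some $m = m(x,y) \in \N$ with $\bbP(Z_m = y \mid Z_0 = x) > 0$; the point is to upgrade this to a large value of $m$. Since the one-step kernel of $\{Z_n\}$ is a (sub-)probability kernel with $\bbP(Z_1 = x \mid Z_0 = x) \geq 0$, a cleaner route is to use that $\{Z_n\}$ is irreducible to connect $x$ to $y$ in some number of steps and then, if necessary, pad the path. Concretely, pick any state $w$ on a path from $x$ to $y$ realizing positive probability in $m$ steps; by irreducibility we can also get from $x$ back near $x$ — but the simplest argument is: by irreducibility there exists $m \geq 1$ with $P_m(x,y) > 0$, and for any $j \geq 0$ we have $P_{m + j \cdot \ell}(x,y) > 0$ where $\ell$ is the length of any cycle through some intermediate vertex; choosing $j$ large we obtain some $k \geq n_0$ with $\bbP(Z_k = y \mid Z_0 = x) > 0$, and then the displayed sum is strictly positive because $\bbP(\tau_1 = k) > 0$ as well.

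The main obstacle is the last point — producing a single index $k \geq n_0$ that is simultaneously a time at which $\{Z_n\}$ can move from $x$ to $y$ with positive probability. Irreducibility alone only yields \emph{some} such index, and we need it to be large. I expect this to be handled by a standard gcd/periodicity argument: fix $x,y$, choose $m$ with $P_m(x,y) > 0$, choose $x'$ on the corresponding path with $P_{m_1}(x,x') > 0$ and $P_{m_2}(x',y) > 0$ and $m_1 + m_2 = m$, and choose $\ell \geq 1$ with $P_\ell(x',x') > 0$ (which exists by irreducibility applied to $x' \to x' $ via $x' \to \cdot \to x'$, unless $X = \{x'\}$, a trivial case). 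Then $P_{m + j\ell}(x,y) \geq P_{m_1}(x,x') P_\ell(x',x')^j P_{m_2}(x',y) > 0$ for every $j \geq 0$, and for $j$ large enough $m + j\ell \geq n_0$. Feeding this $k = m + j\ell$ into the displayed lower bound gives $P(x,y) > 0$, which together with Lemma \ref{lem:subord} establishes assumption \eqref{A} for $\{Y_n\}$. (Alternatively, one may simply invoke the equivalence recorded after \eqref{eq:DSP}: it is enough to check that $\{Y_n\}$ is irreducible, which follows because $\bbP(\tau_n = kn) \geq \bbP(\tau_1 = k)^n > 0$ for all $k \geq n_0$ together with irreducibility of $\{Z_n\}$ gives $P_n(x,y) > 0$ for suitable $n$.)
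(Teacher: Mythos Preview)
Your proposal is correct and follows essentially the same route as the paper: invoke Lemma \ref{lem:subord} for the DSP inequality, then verify $P(x,y)>0$ by picking a single index $k\geq n_0$ with $\bbP(Z_k=y\mid Z_0=x)>0$ and using \eqref{eq:tau_pos}. The paper simply asserts that irreducibility of $\{Z_n\}$ yields such a $k\geq n_0$ without further comment, whereas you supply the standard loop-padding argument ($P_{m+j\ell}(x,y)\geq P_{m_1}(x,x')P_\ell(x',x')^j P_{m_2}(x',y)>0$); this extra detail is accurate and fills in what the paper leaves implicit.
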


\begin{proof}
In view of Lemma \ref{lem:subord}, we only need to show $\bbP (Y_1=y\mid Y_0=x)>0$, for all $x,y \in X$. 
Irreducibility of $\{Z_n: n \geq 0\}$  implies that for any $x,y\in X$ there exists $k \geq n_0$ such that 
$$
\bbP (Z_k=y\mid Z_0=x) >0.
$$
Then, by \eqref{eq:tau_pos},
\begin{align*}
\bbP (Y_1=y\mid Y_0=x) & = \sum_{n=1}^\infty \bbP (Z_n=y\mid Z_0=x)\bbP(\tau_1=n) \\
& \geq \bbP (Z_k=y\mid Z_0=x)\bbP(\tau_1=k) >0
\end{align*} 
and the result is proved.
\end{proof}

The class of processes $\{\tau_n:n \geq 0 \}$ that fits our assumptions is relatively large. Among other examples it includes random walks which are run by subexponential distributions, cf.\ \cite[Sec.\ 1.3.1]{Borowkow_book}, i.e.\ random walks satisfying
\begin{align}\label{eq:subexp}
\lim_{n\to \infty}\frac{\bbP (\tau_1 = n+1)}{\bbP (\tau_1 =n)}=1\quad \text{and}\quad 
\lim_{n\to \infty}\frac{\bbP (\tau_2=n)}{\bbP (\tau_1 =n)}=2.
\end{align} 

A useful sufficient condition for \eqref{eq:DSP_N} is given in the following lemma. 
We omit the proof as it is an easy modification of the argument from Corollary \ref{cor:JandK}.

\begin{lemma}\label{lem:suff_DSP_N}
Suppose that
$$
\bbP (\tau_1 =n) \asymp j(n)l(n), \quad n \in \N,
$$
where $j, l:\N \to (0,\infty)$ are non-increasing sequences such that for constants $C_1,C_2>0$
$$
j(n) \leq C_1 j(2n), \quad n \in \N,
$$
and
$$
l(n)l(m) \leq C_2 l(n+m), \quad n,m \in \N.
$$
Then $\{\tau_n:n \geq 0 \}$ satisfies  \eqref{eq:DSP_N}.
\end{lemma}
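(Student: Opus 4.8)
The plan is to transcribe the proof of Corollary \ref{cor:JandK} to the present sequence setting, with $j$ playing the role of $J$ and $l$ that of $K$. Since $\{\tau_n\}$ is an increasing random walk on $\N_0$ starting at $0$, the increments $\tau_1$ and $\tau_2-\tau_1$ are i.i.d.\ and take values in $\{1,2,\dots\}$; hence, for $n\ge 2$,
\[
\bbP(\tau_2=n)=\sum_{k=1}^{n-1}\bbP(\tau_1=k)\,\bbP(\tau_1=n-k).
\]
Inserting the two-sided bound $\bbP(\tau_1=m)\asymp j(m)l(m)$, the assertion \eqref{eq:DSP_N} reduces to exhibiting a constant $c>0$ with $\sum_{k=1}^{n-1}j(k)l(k)\,j(n-k)l(n-k)\le c\,j(n)l(n)$ for every $n\ge 2$.

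The argument then proceeds in two steps, mirroring Corollary \ref{cor:JandK}. First I would peel off the $l$-factors: from the submultiplicativity $l(a)l(b)\le C_2\,l(a+b)$ (with $a=k$, $b=n-k$) we get $l(k)l(n-k)\le C_2\,l(n)$ for all $1\le k\le n-1$, so that
\[
\sum_{k=1}^{n-1}j(k)l(k)\,j(n-k)l(n-k)\le C_2\,l(n)\sum_{k=1}^{n-1}j(k)\,j(n-k).
\]
Second, I would establish the discrete analogue of the estimate underlying Proposition \ref{prop:doubling}, namely $\sum_{k=1}^{n-1}j(k)\,j(n-k)\le c'\,j(n)$. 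Split this sum at $\lfloor n/2\rfloor$ and use the symmetry $k\leftrightarrow n-k$, so that it suffices to bound the range $1\le k\le\lfloor n/2\rfloor$. There $n-k\ge\lceil n/2\rceil$, and by monotonicity together with the doubling of $j$ one has $j(n-k)\le j(\lceil n/2\rceil)\le C_1\,j(2\lceil n/2\rceil)\le C_1\,j(n)$, since $2\lceil n/2\rceil\ge n$. Consequently
\[
\sum_{k=1}^{\lfloor n/2\rfloor}j(k)\,j(n-k)\le C_1\,j(n)\sum_{k\ge1}j(k),
\]
and doubling both halves gives $\sum_{k=1}^{n-1}j(k)\,j(n-k)\le 2C_1\big(\textstyle\sum_{k\ge1}j(k)\big)\,j(n)$. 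Chaining the three displays and absorbing the constants hidden in $\asymp$ yields \eqref{eq:DSP_N}.

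The only point that is not purely mechanical is the finiteness of $S:=\sum_{k\ge1}j(k)$. This is the sequence counterpart of the fact that in Proposition \ref{prop:doubling} the kernel comparable to $J$ is (sub-)probabilistic, hence summable, and of the requirement in the examples following Corollary \ref{cor:JandK} that the exponent $\gamma$ be taken large enough. It holds in all the cases relevant to Section \ref{sec:chains_subordinate}, e.g.\ $j(n)=n^{-1-\alpha}$ with $l\equiv 1$ for the stable subordinator and $l(n)=e^{-cn}$ for the relativistic stable one. If one only knows $\sum_{k\ge1}j(k)l(k)<\infty$ while $\sum_{k\ge1}j(k)=\infty$, the conclusion may genuinely fail (take $j\equiv 1$, $l(n)=e^{-cn}$: then $\bbP(\tau_2=n)\asymp n\,e^{-cn}$), so this summability is needed here and is implicit in the intended applications. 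Granting it, the remainder is a routine transcription of the proof of Corollary \ref{cor:JandK}.
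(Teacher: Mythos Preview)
Your approach is exactly what the paper intends: the proof is omitted there with the remark that it ``is an easy modification of the argument from Corollary \ref{cor:JandK}'', and your transcription---peeling off the $l$-factors via submultiplicativity and then running the doubling argument of Proposition \ref{prop:doubling} on $j$---is the correct modification.

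Your additional observation is also correct and worth recording: the hypothesis $\sum_{k\ge 1} j(k)<\infty$ is tacitly used (it plays the role of the (sub\nobreakdash-)probability assumption on $P$ in Proposition \ref{prop:doubling}), and your counterexample $j\equiv 1$, $l(n)=e^{-cn}$ shows that the lemma as stated can fail without it, since then $\bbP(\tau_2=n)\asymp (n-1)e^{-cn}$. In the paper's only application (Proposition \ref{prop:rel_stab}, where $j(k)=k^{-1-\alpha}$) this summability holds, so nothing downstream is affected, but the lemma's statement should include it.
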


Another important examples of increasing random walks which can be used as a random change of time in the present framework are \textit{discrete subordinators} introduced in \cite{Bendikov-Saloff}.
Such processes correspond to Bochner's subordination which is a well-known concept in the theory of continuous time Markov processes. 
To be more precise, let $\phi$ be a Bernstein function \cite{Schilling_book} such that $\phi(0+)=0$, $\phi(1)=1$ and which admits the following representation
\begin{align*}
\phi (\lambda) = b\lambda +\int_{0}^{\infty} (1-e^{-\lambda t})\, \nu(\mathrm{d}t),
\end{align*} 
for a constant $b\geq 0$ and a measure $\nu$ on $(0,\infty)$ satisfying $\int(1\wedge x)\nu (\mathrm{d}x)<\infty$. Let $\{\tau_n:n \geq 0 \}$ be a random walk (discrete subordinator) taking values in $\mathbb{N}_0$, started at 0 and with the first-step-distribution given by
\begin{align}\label{discr_subord_formula}
\Pp (\tau_1 = k) = b\delta_1(k)+\frac{1}{k!}\int_0^\infty t^k e^{-\lambda t}\nu (\mathrm{d}t).
\end{align}

We remark that if $L$ is the discrete generator of the Markov chain $\{Z_n: n \geq 0\}$ then the generator of the subordinate process $\{Y_n: n \geq 0\}$ can be computed directly with the functional calculus and is equal to $-\phi(-L)$ (for details see \cite[Section 2.3]{Bendikov-Saloff}).  

The rest of this section is devoted  to a special situation where $\{Z_n: n \geq 0\}$ is assumed to be a nearest-neighbour (also called simple) random walk on a graph of finite geometry over $X$. By $L$ we denote the discrete Laplacian related to $\{Z_n: n \geq 0\}$ which is a local operator  in  the sense that $Lf(x)$ depends only on finitely many values of the function $f$ that are taken on vertices neighbouring to $x$. Recall that such processes can be seen as discrete-time counterparts of diffusions in $X$. As we mentioned in the Introduction, in this framework the concept of discrete subordination enables us to define numerous non-local discrete counterparts of operators which are known from the theory of jump L\'evy processes in Euclidean spaces. This includes fractional powers of the discrete Laplacian and quasi-relativistic operators which play an important role in various applications. We first discuss in more detail the properties of the corresponding discrete subordinators. 

\subsubsection*{Stable and relativistic stable subordinators} 
Let 
\begin{align*} 
\phi_m(\lambda)= \frac{(\lambda +m^{1/\alpha})^\alpha -m}{\theta_m},\quad  \text{for}\ \alpha \in (0,1)\ \text{and any}\ m\geq 0,
\end{align*}
where $\theta_m = (1 +m^{1/\alpha})^\alpha -m$.
We note that $\phi_m(\lambda)$ is a Bernstein function such that $\phi_m (1)=1$ and it admits the following L\'evy measure 
\begin{align*}
\nu_m (\mathrm{d}t)=
\frac{\alpha}{\theta_m \normal \Gamma (1-\alpha) }e^{-m^{1/\alpha}t}t^{-1-\alpha}\mathrm{d}t.
\end{align*}
Let $\{\tau_n^{(m)}:\, n\geq 0\}$ denote the corresponding discrete subordinator. For $m=0$ it is called the \textit{$\alpha$-stable} subordinator (observe that $\phi_0 (\lambda) =\lambda^\alpha$), while for $m>0$ it is called the \textit{relativistic $\alpha$-stable} subordinator.
With the aid of \eqref{discr_subord_formula} we easily find that
\begin{align}\label{tempered_prob}
a_m(k)\coloneqq \Pp (\tau_1^{(m)}=k) = \frac{\alpha}{\theta_m \normal \Gamma (1-\alpha) }\frac{\Gamma (k-\alpha)}{\Gamma (k+1)}(1+m^{1/\alpha})^{\alpha -k},\quad k\in \mathds{N},
\end{align}
which implies the following relation
\begin{align}\label{relation_tempered}
a_m(k)=\theta_m^{-1} e^{M(\alpha -k)}a_0(k),\quad M= \log (1+m^{^1/\alpha}) = \log(\theta_m+m)^{1/\alpha},
\end{align}
where 
\begin{align}\label{stable_prob}
a_0(k) = \frac{\alpha}{\Gamma (1-\alpha)}\frac{\Gamma (k-\alpha)}{\Gamma (k+1)}
\end{align}
is the first-step-distribution of the $\alpha$-stable discrete subordinator. 

It is also clear that \eqref{relation_tempered} extends to the convolution powers, that is
\begin{align*}
a_m^{\ast n}(k)=e^{(M\alpha -\log \theta_m)n}e^{-Mk}a_0^{\ast n}(k),\quad k, n \in \mathds{N}.
\end{align*} 
This equality reveals that the long range distributional properties of $\{\tau_n^{(m)}:\, n\geq 0\}$ for $m=0$ and $m>0$ are essentially different. 
\normal

We next show that the (relativistic) $\alpha$-stable subordinator enjoys the DSP property.

\begin{proposition} \label{prop:rel_stab}
 For any $m \geq 0$ and $\alpha \in (0,1)$ there is a constant $C=C(\alpha ,m)>0$ such that the (relativistic) $\alpha$-stable subordinator $\{\tau_n^{(m)}:\, n\geq 0\}$ satisfies \eqref{eq:DSP_N} with $C$.
\end{proposition}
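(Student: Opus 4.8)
The plan is to verify the sufficient condition from Lemma \ref{lem:suff_DSP_N}: it suffices to write $\Pp(\tau_1^{(m)} = k) = a_m(k) \asymp j(k) l(k)$ for non-increasing sequences $j, l$ with $j(k) \leq C_1 j(2k)$ (a doubling condition) and $l(k)l(n) \leq C_2 l(k+n)$ (a submultiplicativity condition). The explicit formula \eqref{tempered_prob} makes this transparent once we understand the asymptotics of $\Gamma(k-\alpha)/\Gamma(k+1)$. First I would recall the standard asymptotic $\Gamma(k-\alpha)/\Gamma(k+1) \sim k^{-1-\alpha}$ as $k \to \infty$, so that in fact $a_m(k) \asymp (1 \vee k)^{-1-\alpha} (1+m^{1/\alpha})^{-k}$ for all $k \in \N$ (the lower bound at small $k$ is harmless since each $a_m(k)$ is a fixed positive number).

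Then I would set $j(k) = (1 \vee k)^{-1-\alpha}$ and $l(k) = (1+m^{1/\alpha})^{-k}$. The sequence $j$ is non-increasing, and $j(k)/j(2k) = (2k/k)^{1+\alpha} = 2^{1+\alpha}$ for $k \geq 1$ (with an obvious bound near $k=1$), so the doubling condition holds with $C_1 = 2^{1+\alpha}$. The sequence $l$ is non-increasing (here $1+m^{1/\alpha} \geq 1$, with equality when $m=0$, in which case $l \equiv 1$ and the submultiplicativity is trivial) and is genuinely multiplicative: $l(k)l(n) = (1+m^{1/\alpha})^{-(k+n)} = l(k+n)$, so \eqref{K-condition}-type condition holds with $C_2 = 1$. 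Applying Lemma \ref{lem:suff_DSP_N} then yields \eqref{eq:DSP_N} for $\{\tau_n^{(m)}: n \geq 0\}$ with a constant $C = C(\alpha, m)$.

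Alternatively — and perhaps cleaner for $m > 0$ — one can exploit the factorization \eqref{relation_tempered}: $a_m(k) = \theta_m^{-1} e^{M\alpha} e^{-Mk} a_0(k)$ with $M = \log(1+m^{1/\alpha})$, which reduces the relativistic case to the stable case $m=0$ decorated by a genuinely multiplicative exponential weight $e^{-Mk}$. Since $a_0$ satisfies the DSP (by the $m=0$ instance, which follows from the pure power-law tail $a_0(k) \asymp k^{-1-\alpha}$ and Lemma \ref{lem:suff_DSP_N} with $l \equiv 1$), and the convolution identity $a_m^{\ast 2}(k) = e^{2M\alpha} \theta_m^{-2} e^{-Mk} a_0^{\ast 2}(k)$ transports the bound $a_0^{\ast 2}(k) \leq C a_0(k)$ directly to $a_m^{\ast 2}(k) \leq C \theta_m^{2-2} e^{-M\alpha} \cdot (\text{const}) \, a_m(k)$ after cancelling the common $e^{-Mk}$ factor, one recovers \eqref{eq:DSP_N} for every $m \geq 0$ with an explicit constant.

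The only genuinely quantitative point — and the main (mild) obstacle — is the uniform two-sided comparison $\Gamma(k-\alpha)/\Gamma(k+1) \asymp k^{-1-\alpha}$ on all of $\N$ rather than merely asymptotically; this requires knowing that the ratio $\Gamma(k-\alpha)/(\Gamma(k+1)k^{-1-\alpha})$ is bounded above and below away from $0$ over $k \in \N$, which follows from its convergence to $1$ together with positivity of each term. Everything else is a routine check of the doubling and submultiplicativity hypotheses, so the proof is short.
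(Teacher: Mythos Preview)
Your proposal is correct and follows essentially the same route as the paper: establish $a_m(k)\asymp k^{-1-\alpha}e^{-Mk}$ with $M=\log(1+m^{1/\alpha})$, then feed $j(k)=k^{-1-\alpha}$, $l(k)=e^{-Mk}$ into Lemma~\ref{lem:suff_DSP_N}. The only difference is that the paper obtains the uniform two-sided bound $\Gamma(k-\alpha)/\Gamma(k+1)\asymp k^{-1-\alpha}$ via Wendel's inequality (yielding explicit constants $1$ and $1/(1-\alpha)$), whereas you use the softer argument ``asymptotic plus positivity of each term''; both are valid, and your alternative reduction via the convolution identity for $m>0$ is also sound but not needed.
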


\begin{proof}
We apply Wendel's bounds \cite{Wendel} in the form 
\begin{align*}
\Big( \frac{x}{x+s}\Big)^{1-s}\Gamma (x)\leq x^{-s}\Gamma (x+s)\leq \Gamma (x),\quad x>0,\ s\in (0,1).
\end{align*}
By setting $x = k-\alpha $ for any $k\in \mathds{N}$ and $s=\alpha$ we arrive at
\begin{align}\label{c_0_estimate}
\frac{1}{k^{\alpha +1}}\leq \frac{\Gamma (k-\alpha)}{\Gamma (k+1)}\leq \frac{1}{1-\alpha}\frac{1}{k^{\alpha +1}},\quad k\geq 1.
\end{align}
Together with \eqref{relation_tempered}--\eqref{stable_prob} this gives that 
$$
a_m(k)=e^{M(\alpha -k)}a_0(k) \asymp  j(k) l(k),
$$
with
$$
j(k) = k^{-1-\alpha}, \qquad l(k)= e^{-Mk}. 
$$
The assertion follows then from Lemma \ref{lem:suff_DSP_N}.
\end{proof}

\subsubsection*{Nearest-neighbour random walk and the corresponding subordinate Markov chain}
We now present useful estimates of the one-step transition probabilities for the subordinate nearest neighbour random walk where the underlying subordinator is (relativistic) $\alpha$-stable. We consider the graph $G$ over $X$ which satisfies our assumption \eqref{C} and further assume that $G$ is endowed with a family of symmetric and non-negative weights (conductances) $\left\{\mu_{x,y}\right\}_{x,y \in X}$ such that 
$\mu_{x,y} >0$ if and only if $ x \sim y$. 
We set $\mu_x = \sum_{y \in X} \mu_{x,y}$ and consider the measure on $X$ given by $\mu(A) = \sum_{x \in A} \mu_x$. 
The corresponding nearest-neighbour random walk $\{Z_n: n \geq 0\}$ is then a $\mu$-symmetric time-homogeneous Markov chain with values in $X$ and one-step transition probabilities given by
$$
\bbP (Z_{n+1}=y\mid Z_n=x):= \frac{\mu_{x,y}}{\mu_x},
$$
see e.g. Barlow \cite{Barlow_book} or Kumagai \cite{Kumagai}. 
Let
$$
g_n(x,y) = \frac{\bbP^x (Z_{n}=y)}{\mu_y}
$$
denote the transition densities of $\{Z_n: n \geq 0\}$ with respect to $\mu$. Finally, let $\{Y_n: n \geq 0\}$ be the subordinate Markov chain which is subordinated by the (relativistic) $\alpha$-stable subordinator $\{\tau_n^{(m)}:\, n\geq 0\}$. Recall that by \eqref{relation_tempered}, its one-step transition probabilities are given by $P(x,y) = p(x,y) \mu_y$, where 
$$ 
p(x,y)=\sum_{n=1}^\infty g_n(x,y) \Pp (\tau_1^{(m)}=n) = \frac{e^{M\alpha}}{\theta_m}\sum_{n=1 \vee d(x,y)}^\infty g_n(x,y)  e^{-Mn}a_0(n), \ M=\log (1+m^{1/\alpha}).
$$
We next find two-sided, qualitatively sharp estimates of the kernel $p(x,y)$. This can be done under the assumption that the densities $g_n(x,y)$ satisfy the sub-Gaussian estimates. More precisely, we assume that there are parameters $\beta, \gamma >1$ and the constants $c_1,\ldots,c_4>0$ such that for all $x, y \in X$, $n \in \N$,
\begin{align}\label{UHK}
g_n(x,y)
\leq 
\frac{c_1}{n^{\gamma /\beta}}
\exp\Big\{-c_2\left(\frac{d(x,y)}{n^{1/\beta}}\right)^{\frac{\beta}{\beta-1}}\Big\},
\end{align}
\begin{align}\label{LHK}
g_n(x,y)
+
g_{n+1}(x,y)
\geq
\frac{c_3}{n^{\gamma /\beta}}
\exp\Big\{-c_4\left(\frac{d(x,y)}{n^{1/\beta}}\right)^{\frac{\beta}{\beta-1}}\Big\},
\end{align}
whenever $n \geq d(x,y)$, cf.\ \cite[Definition 3.3.4(1)]{Kumagai} (note that for $\beta=2$ these are Gaussian bounds). For the lower bound we take into account the sum of $p_n(x,y)$ and $p_{n+1}(x,y)$ as it may happen that the graph $G$ is bipartite, as in the case of $\bbZ^d$. Upper and lower heat kernel bounds of the form \eqref{UHK}--\eqref{LHK} are valid on the  integer lattice but also on many fractal-type graphs including the famous example of the graphical Sierpinski gasket \cite{Jones} and Sierpinski carpet \cite{Barlow_Bass}, and more general graphs \cite{Barlow_book, Grigoryan-Telcs, Hambly_Kumagai, Kumagai}. 

\begin{proposition}\label{prop:UHK_relativistic}
Under \eqref{UHK}--\eqref{LHK} the following estimates holds.
\begin{itemize}
\item[a)] If $m=0$, then there is a constant $C \geq 1$ such that 
\begin{align*}
\frac{1}{C}\frac{1}{(1+d(x,y))^{\alpha \beta +\gamma}} \leq p(x,y)  \leq C \frac{1}{(1+d(x,y))^{\alpha \beta +\gamma}},\quad x,y\in X.
\end{align*}
\item[b)] If $m>0$, then there are constants $C, \widetilde C \geq 1$ such that
\begin{align*}
\frac{1}{C} \exp(-\widetilde C d(x,y)) \leq p(x,y)\leq C \exp\bigg( -\frac{d(x,y)}{\widetilde C}\bigg),\quad x,y\in X.
\end{align*}
\end{itemize}
\end{proposition}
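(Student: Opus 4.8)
The plan is to insert the two-sided sub-Gaussian bounds \eqref{UHK}--\eqref{LHK} into the series representation
$$
p(x,y)=\frac{e^{M\alpha}}{\theta_m}\sum_{n=1\vee d(x,y)}^{\infty} g_n(x,y)\, e^{-Mn} a_0(n),\qquad M=\log(1+m^{1/\alpha}),
$$
and to estimate the resulting sum by a Laplace-type (saddle point) analysis in the summation index $n$. Using \eqref{c_0_estimate} we first replace $a_0(n)$ by its polynomial equivalent $a_0(n)\asymp n^{-1-\alpha}$, so that the problem is reduced to controlling, up to multiplicative constants depending only on $\alpha,m,c_1,\dots,c_4$,
$$
S(r):=\sum_{n\ge 1\vee r}\frac{1}{n^{\gamma/\beta+1+\alpha}}\,e^{-Mn}\exp\!\Big\{-c\big(r/n^{1/\beta}\big)^{\beta/(\beta-1)}\Big\},\qquad r:=d(x,y),
$$
with $c=c_2$ for the upper bound and $c=c_4$ (and the extra telescoping term $g_{n+1}$) for the lower bound. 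The lower bound needs a little care because of the possible bipartiteness: one keeps both $g_n$ and $g_{n+1}$, notes that $a_0$ and $e^{-Mn}$ change only by a bounded factor between $n$ and $n+1$, and so $\sum_n (g_n+g_{n+1})e^{-Mn}a_0(n)\le (1+\text{const})\,p(x,y)$, which lets one transfer \eqref{LHK} into a genuine lower bound for $S(r)$.

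For part a) ($m=0$, so $M=0$) the exponential weight disappears and $S(r)=\sum_{n\ge 1\vee r} n^{-\gamma/\beta-1-\alpha}\exp\{-c(r/n^{1/\beta})^{\beta/(\beta-1)}\}$. Here I would split the sum at $n\asymp r^{\beta}$: for $n\lesssim r^{\beta}$ the sub-Gaussian factor is exponentially small and, after the standard substitution $n=r^{\beta}t$, contributes at most a constant multiple of $r^{-\alpha\beta-\gamma}$ (the exponential decay beats the polynomial); for $n\gtrsim r^{\beta}$ the exponential factor is bounded below by a constant, and $\sum_{n\ge r^{\beta}} n^{-\gamma/\beta-1-\alpha}\asymp (r^{\beta})^{-\gamma/\beta-\alpha}=r^{-\gamma-\alpha\beta}$, which matches. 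The lower bound is obtained by keeping only the single term $n=\lceil r^{\beta}\rceil$ (or the block $n\in[r^{\beta},2r^{\beta}]$), where both factors are of the right order. This yields $p(x,y)\asymp (1+r)^{-\alpha\beta-\gamma}$ after restoring the $1\vee$ and adding the cosmetic $+1$ to handle $r=0$.

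For part b) ($m>0$, so $M>0$) the factor $e^{-Mn}$ dominates: I would optimise $n\mapsto Mn+c\,r^{\beta/(\beta-1)}n^{-1/(\beta-1)}$, whose minimiser is $n_\ast\asymp r$ with minimal value $\asymp r$, so the whole sum is $\asymp e^{-\widetilde C r}$ with $\widetilde C$ comparable to $M$ up to constants depending on $\beta,c_2,c_4$ — giving an upper bound $C e^{-r/\widetilde C}$ and a lower bound $\frac1C e^{-\widetilde C r}$ (the two constants need not agree, as stated). Concretely, the upper bound follows from dropping the sub-Gaussian factor (it is $\le 1$), bounding $g_n\le c_1 n^{-\gamma/\beta}$, and summing the geometric-type tail $\sum_{n\ge r} n^{-\gamma/\beta-1-\alpha}e^{-Mn}\lesssim e^{-Mr}$, while for the lower bound one again keeps the block $n\in[r,2r]$ and uses \eqref{LHK} together with $e^{-Mn}\ge e^{-2Mr}$ there. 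The main obstacle is purely technical bookkeeping: making the split point, the substitution, and the elementary inequality $\sum_{n\ge N}n^{-s}\asymp N^{1-s}$ (for $s>1$) interact cleanly so that the constants genuinely depend only on $\alpha,m$ and the heat-kernel constants and not on $x,y$; once the split at $n\asymp r^{\beta}$ (in a)) resp.\ $n\asymp r$ (in b)) is fixed, each piece is a routine one-line estimate.
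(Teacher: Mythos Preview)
Your proposal is correct and follows essentially the same route as the paper: replace $a_0(n)$ by $n^{-1-\alpha}$ via \eqref{c_0_estimate}, transfer \eqref{LHK} to a lower bound for $p$ using $a_0(n)\asymp a_0(n+1)$, and then in a) isolate the contribution from $n\asymp r^{\beta}$ (the paper does this by comparing to an integral and substituting $t=(u\,d(x,y))^{\beta}$, you by a direct split---the content is the same), while in b) drop the sub-Gaussian factor for the upper bound and keep the term $n=r$ (the paper) or your block $n\in[r,2r]$ for the lower bound. The two arguments coincide in all essential points.
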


\begin{proof}
We start with part a). It follows by \eqref{stable_prob}--\eqref{c_0_estimate} and \eqref{UHK}--\eqref{LHK} that
$$
p(x,x) \asymp \sum_{n=1}^\infty  n^{-\alpha -\gamma/\beta -1} < \infty, 
$$
and thus we only need to consider $x,y \in X$ for which $d(x,y) \geq 1$. For the upper bound, we observe that by \eqref{stable_prob}, \eqref{c_0_estimate} and \eqref{UHK} we have
\begin{align*}
p(x,y) & \leq c_1^\prime  \sum_{n=d(x,y)}^\infty  n^{-\alpha -\gamma/\beta -1}
\exp\Big(-c_2\left(\frac{d(x,y)}{n^{1/\beta}}\right)^{\frac{\beta}{\beta-1}}\Big)\\
& \leq c_3\int_{d(x,y)}^{\infty} t^{-\alpha -\gamma/\beta -1}
\exp\Big(-c_4\left(\frac{d(x,y)}{t^{1/\beta}}\right)^{\frac{\beta}{\beta-1}}\Big)dt.
\end{align*}
With the substitution $t=(u\, d(x,y))^{\beta}$, we obtain that the last integral is equal to
$$
\beta \, d(x,y)^{-\alpha \beta-\gamma} \int_{d(x,y)^{\frac{1}{\beta}-1}}^{\infty}
 u^{-\alpha \beta -\gamma -1}
\exp\Big(-c_4\left(\frac{
1}{u}\right)^{\frac{\beta}{\beta-1}}\Big)du,
$$
which leads to the desired bound 
$$
p(x,y) \leq c_5 d(x,y)^{-\alpha \beta-\gamma}.
$$
For the matching lower bound, we observe that by \eqref{stable_prob}, \eqref{c_0_estimate} and \eqref{LHK},
\begin{align*}
\sum_{n=d(x,y)}^\infty \big(g_n(x,y)+g_{n+1}(x,y)\big) a_0(n) 
& \geq c_6  \sum_{n=d(x,y)}^\infty  n^{-\alpha -\gamma/\beta -1}
\exp\Big(-c_7\left(\frac{d(x,y)}{n^{1/\beta}}\right)^{\frac{\beta}{\beta-1}}\Big)\\
& \geq c_6 e^{-c_7}  \sum_{n=\left\lceil  d(x,y)^{\beta}\right\rceil}^\infty  n^{-\alpha -\gamma/\beta -1} \\
& \geq c_8  (1+d(x,y))^{-\alpha\beta -\gamma}.
\end{align*}
On the other hand, by \eqref{tempered_prob} and \eqref{c_0_estimate},
$$
a_0(n) \asymp a_0(n+1), \quad n \in \N,
$$
which yields
$$
p(x,y) \geq c_9 (1+d(x,y))^{-\alpha\beta -\gamma},
$$
and the proof of part a) is completed.

To establish part b) we observe that the upper bound follows directly by \eqref{stable_prob}, \eqref{c_0_estimate} and \eqref{UHK},
$$
p(x,y) \leq c_{10}\frac{e^{M\alpha}}{\theta_m}\sum_{n=1 \vee d(x,y)}^\infty  e^{-Mn} a_0(n) n^{-\gamma/\beta}\leq c_{11} e^{-M d(x,y)}.
$$
The proof of the lower estimate is similar to that one in part a). Indeed, by \eqref{tempered_prob}, \eqref{stable_prob}, \eqref{c_0_estimate}, and \eqref{LHK}, we obtain
\begin{align*}
p(x,y)  &\geq c_{12}
\sum_{n=d(x,y)}^\infty e^{-Mn} n^{-\alpha -\gamma/\beta -1}
\exp\Big\{-c_7\left(\frac{d}{n^{1/\beta}}\right)^{\frac{\beta}{\beta-1}}\Big\} \\
&\geq 
c_{12} d(x,y)^{-\alpha -\gamma/\beta -1}  e^{-Md(x,y)} \exp\Big\{-c_7\left(\frac{d(x,y)}{d(x,y)^{1/\beta}}\right)^{\frac{\beta}{\beta-1}}\Big\} \\
&\geq 
c_{13} \exp\Big\{-c_{14} d(x,y)\Big\},
\end{align*}
and the proof is finished.
\end{proof}

\subsection{Markov chains with independent coordinates on product spaces}\label{sec:product} 
The following observation was kindly communicated to us by T.\ Kulczycki. Suppose we are given two independent Markov chains $\big\{Y^{(1)}_n: n \in \N_0\big\}$ and $\big\{Y^{(2)}_n: n \in \N_0\big\}$ with values in countably infinite spaces $X_1$ and $X_2$. Then the product chain $\big\{(Y^{(1)}_n,Y^{(2)}_n): n \in \N_0\big\}$ with values in $X_1 \times X_2$ satisfies the DSP if and only if each of its coordinates has this property. It easily extends to general product discrete time processes with finitely many independent coordinates and provides a lot of interesting examples of Markov chains with the DSP on integer lattices and products of more general graphs. 

Interestingly, this example allows us to observe that the class of Markov chains with the DSP includes also processes which neither have one-step transition probability with an isotropic profile on a countable measure metric space, nor are subordinate Markov chain, cf.\ Sections \ref{sec:chains_metric}, \ref{sec:chains_subordinate}.  It may lead to highly anisotropic transition probabilities, e.g.\ one can have 
$$P(x,y) = P_1(x_1,y_1)P_2(x_2,y_2),$$ 
where $P_1$ decays polynomially and $P_2$ decays exponentially, cf.\ Section \ref {sec:expl_rates} paragraphs (2) and (3).

\subsection{Estimates of harmonic functions -- a few explicit examples.}\label{sec:expl_rates}

We now give some examples of the decay rates for ($\calU -I$)-harmonic functions for various types of Markov chains with values in a countably infinite set $X$ for which the one-step transition probabilities $P(x,y)$ are driven by profiles with respect to a given metric $d$ on $X$. 
We analyze nearest-neighbor random walks and chains with strictly positive kernels $P(x,y)$ with polynomial and exponential decay at infinity. 

For better illustration we also assume that the confining potential $V$ takes the form $V(x) = W(d(x,x_0))$ for some $x_0 \in X$ with the profile function $W:[0,\infty) \to \R$ such that $\log W$ is an increasing function regularly varying (at infinity) of index $\rho \geq 0$. 

\smallskip 
\noindent
(1) \textbf{Nearest-neighbour random walk.} Our Corollary \ref{cor:nnrw_strongest} (resulting from Theorems \ref{th:upper_subharm_nnrw}-\ref{th:lower_est_nnrw}) states that in this case the decay rate of ($\calU -I$)-harmonic functions is governed by the expression 
\begin{align}\label{prof:1}
e^{-\frac{1}{1+\rho}d(x,x_0) \log W(d(x,x_0)) (1+o(1))}, \quad \text{as} \ \ d(x,x_0) \to \infty.
\end{align}
This is illustrated in Table \ref{Table_NNRW} for several typical profiles $W$.
Interestingly, we observe that in this case for confining potentials the decay rate is always super-exponential. 
 \begin{table}[h]
\centering
\begin{tabular}{|c| >{\centering} b{3.4cm}|  >{\centering} b{4.0cm}|b{4.3cm}<{\centering}|}
\hline
profile $W(n)$ & $\exp(c n^\rho)$ & $n^\rho$ & $(\log n)^\rho$  \\
\hline
decay rate \eqref{prof:1} & $e^{-\frac{c}{1+\rho} d(x,x_0)^{\rho +1}(1+o(1))}$\, & $e^{-\rho d(x,x_0) \log d(x,x_0)(1+o(1))}$\, & $e^{- d(x,x_0) \log\log d(x,x_0) (1+o(1))}$ \\
\hline
\end{tabular}
\caption{The case of nearest-neighbor walk ($\rho>0$)}
\label{Table_NNRW}
\end{table}

\vspace*{-0,7cm}
\noindent
(2) \textbf{Long-range random walks with polynomial transition probabilities.} Let us consider Markov chains with one-step transition probabilities satisfying 
$$
P(x,y)\asymp d(x,y)^{-\gamma},\quad x,y\in X, \quad x\neq y
$$
for some $\gamma >0$. This class includes some of examples discussed in Section \ref{sec:chains_metric} (see, e.g. \cite{Bass_Levin, Murugan_Saloff_2, Murugan_Saloff-Coste}) and the subordinate chains obtained for discrete $\alpha$-stable subordinators ($m=0$) presented in Section \ref{sec:chains_subordinate}, see Proposition \ref{prop:UHK_relativistic} a). The decay rate obtained for such chains in Theorem \ref{thm:main_1} and Proposition \ref{prop:lower_bound} (see also Corollary \ref{cor:BHI_infty}) takes the form 
\begin{align}\label{prof:2}
\frac{1}{d(x,x_0)^{\gamma} W(d(x,x_0))},  \quad \text{as} \ \ d(x,x_0) \to \infty.
\end{align}
The decay rates of ($\calU -I$)-harmonic functions corresponding to such Markov chains are illustrated in Table \ref{Table_Polyn} below.

 \begin{table}[h]
\centering
\begin{tabular}{|c| >{\centering} b{3.4cm}|  >{\centering} b{4.0cm}|b{4.3cm}<{\centering}|}
\hline 
profile $W(n)$ & $\exp(c n^\rho)$ & $n^\rho$ & $(\log n)^\rho$  \\
\hline
decay rate \eqref{prof:2} & $e^{-c d(x,x_0)^{\rho}}d(x,x_0)^{-\gamma}$\, & $d(x,x_0)^{-\gamma-\rho}$\, & $d(x,x_0)^{-\gamma}(\log n)^{-\rho} $ \\
\hline
\end{tabular}
\caption{The case of chains with polynomial transition probabilities ($\rho>0$).}
\label{Table_Polyn}
\end{table}

\noindent
(3) \textbf{Random walks with exponential transition probabilities.} Suppose there are $c_1, c_2>0$ such that
$$
c_1e^{-c_2 d(x,y)} \leq P(x,y) \leq c_3 e^{-c_4 d(x,y)} ,\quad x,y\in X,
$$
This covers chains with $P(x,y)$ as in Corollary \ref{cor:JandK} for $K(r)=e^{-cr}$ as well as subordinate chains obtained for discrete relativistic $\alpha$-stable subordinators ($m>0$) introduced in Section \ref{sec:chains_subordinate}, see Proposition \ref{prop:UHK_relativistic} b). As in (2), the decay rate obtained for this class (see Theorem \ref{thm:main_1}, Proposition \ref{prop:lower_bound} and Corollary \ref{cor:BHI_infty}) is
\begin{align}\label{prof:3}
e^{-\widetilde c d(x,x_0)} \frac{1}{ W(d(x,x_0))},  \quad \text{as} \ \ d(x,x_0) \to \infty,
\end{align}
where $\widetilde c=c_2$ in the lower bound and $\widetilde c=c_4$ in the upper bound. The behaviour of ($\calU -I$)-harmonic functions in this case is presented in Table \ref{Table_Exp}.

 \begin{table}[h]
\centering
\begin{tabular}{|c| >{\centering} b{3.4cm}|  >{\centering} b{4.0cm}|b{4.3cm}<{\centering}|}
\hline
profile $W(n)$ & $\exp(c n^\rho)$ & $n^\rho$ & $(\log n)^\rho$  \\
\hline
decay rate \eqref{prof:3} & $e^{-c d(x,x_0)^{\rho}-\widetilde c d(x,x_0)}$& $e^{-\widetilde c d(x,x_0)}d(x,x_0)^{-\rho}$\, & $e^{-\widetilde c d(x,x_0)}(\log n)^{-\rho} $ \\
\hline
\end{tabular}
\caption{The case of chains with exponential transition probabilities ($\rho>0$).}
\label{Table_Exp}
\end{table} 

\noindent \textbf{Acknowledgements}. We thank Krzysztof Bogdan, Tadeusz Kulczycki, Mateusz Kwa\'snicki and Ren\'e Schilling for discussions and helpful comments.

\bibliographystyle{abbrv}
\bibliography{discrete_f-k_v1}

\begin{thebibliography}{10}

\bibitem{Acuna}
L.~Acu\~{n}a Valverde.
\newblock Heat content estimates for the fractional {S}chr\"{o}dinger operator
  {$(-\Delta)^{\frac{\alpha}{2}}+c1_{\Omega}$}, {$c>0$}.
\newblock {\em J. Spectr. Theory}, 10(2):599--616, 2020.

\bibitem{Acuna_Banuelos}
L.~Acu\~{n}a Valverde and R.~Ba\~{n}uelos.
\newblock Heat content and small time asymptotics for {S}chr\"{o}dinger
  operators on {$\Bbb R^d$}.
\newblock {\em Potential Anal.}, 42(2):457--482, 2015.

\bibitem{Anastassiou_Bendikov}
G.~A. Anastassiou and A.~Bendikov.
\newblock A discrete analog of {K}ac's formula and optimal approximation of the
  solution of the heat equation.
\newblock {\em Indian J. Pure Appl. Math.}, 28(10):1367--1389, 1997.

\bibitem{Banuelos_Yolcu}
R.~Ba\~{n}uelos and S.~Y. Yolcu.
\newblock Heat trace of non-local operators.
\newblock {\em J. Lond. Math. Soc. (2)}, 87(1):304--318, 2013.

\bibitem{Barlow_book}
M.~T. Barlow.
\newblock {\em Random walks and heat kernels on graphs}, volume 438 of {\em
  London Mathematical Society Lecture Note Series}.
\newblock Cambridge University Press, Cambridge, 2017.

\bibitem{Barlow_Bass}
M.~T. Barlow and R.~F. Bass.
\newblock Random walks on graphical {S}ierpinski carpets.
\newblock In {\em Random walks and discrete potential theory ({C}ortona,
  1997)}, Sympos. Math., XXXIX, pages 26--55. Cambridge Univ. Press, Cambridge,
  1999.

\bibitem{Bass_Levin}
R.~F. Bass and D.~A. Levin.
\newblock Transition probabilities for symmetric jump processes.
\newblock {\em Trans. Amer. Math. Soc.}, 354(7):2933--2953, 2002.

\bibitem{Bendikov-Saloff}
A.~Bendikov and L.~Saloff-Coste.
\newblock Random walks on groups and discrete subordination.
\newblock {\em Math. Nachr.}, 285(5-6):580--605, 2012.

\bibitem{Bingham_book}
N.~H. Bingham, C.~M. Goldie, and J.~L. Teugels.
\newblock {\em Regular variation}, volume~27 of {\em Encyclopedia of
  Mathematics and its Applications}.
\newblock Cambridge University Press, Cambridge, 1987.

\bibitem{Bogdan_Kulczycki_Kwasnicki}
K.~Bogdan, T.~Kulczycki, and M.~Kwa\'{s}nicki.
\newblock Estimates and structure of {$\alpha$}-harmonic functions.
\newblock {\em Probab. Theory Related Fields}, 140(3-4):345--381, 2008.

\bibitem{Bogdan_Kumagai_Kwasnicki}
K.~Bogdan, T.~Kumagai, and M.~Kwa\'{s}nicki.
\newblock Boundary {H}arnack inequality for {M}arkov processes with jumps.
\newblock {\em Trans. Amer. Math. Soc.}, 367(1):477--517, 2015.

\bibitem{Borowkow_book}
A.~A. Borovkov and K.~A. Borovkov.
\newblock {\em Asymptotic analysis of random walks}, volume 118 of {\em
  Encyclopedia of Mathematics and its Applications}.
\newblock Cambridge University Press, Cambridge, 2008.
\newblock Heavy-tailed distributions, Translated from the Russian by O. B.
  Borovkova.

\bibitem{Bottcher_Schilling_Wang}
B.~B\"{o}ttcher, R.~Schilling, and J.~Wang.
\newblock {\em L\'{e}vy matters. {III}}, volume 2099 of {\em Lecture Notes in
  Mathematics}.
\newblock Springer, Cham, 2013.
\newblock L\'{e}vy-type processes: construction, approximation and sample path
  properties, With a short biography of Paul L\'{e}vy by Jean Jacod, L\'{e}vy
  Matters.

\bibitem{Chalbaud_1986}
E.~Chalbaud, J.~P. Gallinar, and G.~Mata.
\newblock The quantum harmonic oscillator on a lattice.
\newblock {\em Journal of Physics A: Mathematical and General},
  19(7):L385--L390, may 1986.

\bibitem{Chen_Wang}
X.~Chen and J.~Wang.
\newblock Intrinsic ultracontractivity of {F}eynman-{K}ac semigroups for
  symmetric jump processes.
\newblock {\em J. Funct. Anal.}, 270(11):4152--4195, 2016.

\bibitem{Csaki}
E.~Cs\'{a}ki.
\newblock A discrete {F}eynman-{K}ac formula.
\newblock {\em J. Statist. Plann. Inference}, 34(1):63--73, 1993.

\bibitem{Demuth_Casteren}
M.~Demuth and J.~A. van Casteren.
\newblock {\em Stochastic spectral theory for selfadjoint {F}eller operators}.
\newblock Probability and its Applications. Birkh\"{a}user Verlag, Basel, 2000.
\newblock A functional integration approach.

\bibitem{Diaconis}
P.~Diaconis, K.~Houston-Edwards, and L.~Saloff-Coste.
\newblock Analytic-geometric methods for finite {M}arkov chains with
  applications to quasi-stationarity.
\newblock {\em ALEA Lat. Am. J. Probab. Math. Stat.}, 17(2):901--991, 2020.

\bibitem{Durugo_Lorinczi}
S.~O. Durugo and J.~L\H{o}rinczi.
\newblock Spectral properties of the massless relativistic quartic oscillator.
\newblock {\em J. Differential Equations}, 264(5):3775--3809, 2018.

\bibitem{Fischer_Keller}
F.~Fischer and M.~Keller.
\newblock Riesz decompositions for {S}chr\"{o}dinger operators on graphs.
\newblock {\em J. Math. Anal. Appl.}, 495(1):Paper No. 124674, 22, 2021.

\bibitem{Gallinar_Chalbaud}
J.-P. Gallinar and E.~Chalbaud.
\newblock Harmonic oscillator on a lattice in a constant force field and
  associated bloch oscillations.
\newblock {\em Phys. Rev. B}, 43:2322--2333, Jan 1991.

\bibitem{Garbaczewski_Stephanovich}
P.~Garbaczewski and V.~Stephanovich.
\newblock L\'evy flights in confining potentials.
\newblock {\em Phys. Rev. E}, 80:031113, Sep 2009.

\bibitem{Gatland}
I.~R. Gatland.
\newblock Theory of a nonharmonic oscillator.
\newblock {\em American Journal of Physics}, 59(2):155--158, 1991.

\bibitem{Grigoryan-Telcs}
A.~Grigor'yan and A.~Telcs.
\newblock Sub-{G}aussian estimates of heat kernels on infinite graphs.
\newblock {\em Duke Math. J.}, 109(3):451--510, 2001.

\bibitem{Hambly_Kumagai}
B.~M. Hambly and T.~Kumagai.
\newblock Heat kernel estimates for symmetric random walks on a class of
  fractal graphs and stability under rough isometries.
\newblock In {\em Fractal geometry and applications: a jubilee of {B}eno\^{\i}t
  {M}andelbrot, {P}art 2}, volume~72 of {\em Proc. Sympos. Pure Math.}, pages
  233--259. Amer. Math. Soc., Providence, RI, 2004.

\bibitem{Jacob}
N.~Jacob.
\newblock {\em Pseudo differential operators and {M}arkov processes. {V}ol.
  {I}, {II}, {III}}.
\newblock Imperial College Press, London, 2001-2005.

\bibitem{Jacob_Wang}
N.~Jacob and F.-Y. Wang.
\newblock Higher order eigenvalues for non-local {S}chr\"{o}dinger operators.
\newblock {\em Commun. Pure Appl. Anal.}, 17(1):191--208, 2018.

\bibitem{Jakubowski_Wang}
T.~Jakubowski and J.~Wang.
\newblock Heat kernel estimates of fractional {S}chr\"{o}dinger operators with
  negative {H}ardy potential.
\newblock {\em Potential Anal.}, 53(3):997--1024, 2020.

\bibitem{Jones}
O.~D. Jones.
\newblock Transition probabilities for the simple random walk on the
  {S}ierpi\'{n}ski graph.
\newblock {\em Stochastic Process. Appl.}, 61(1):45--69, 1996.

\bibitem{Kac_TAMS_49}
M.~Kac.
\newblock On distributions of certain {W}iener functionals.
\newblock {\em Trans. Amer. Math. Soc.}, 65:1--13, 1949.

\bibitem{Kac_Berkley_Symposium}
M.~Kac.
\newblock On some connections between probability theory and differential and
  integral equations.
\newblock In {\em Proceedings of the {S}econd {B}erkeley {S}ymposium on
  {M}athematical {S}tatistics and {P}robability, 1950}, pages 189--215.
  University of California Press, Berkeley and Los Angeles, 1951.

\bibitem{Kaleta}
K.~Kaleta.
\newblock Spectral gap lower bound for the one-dimensional fractional
  {S}chr\"{o}dinger operator in the interval.
\newblock {\em Studia Math.}, 209(3):267--287, 2012.

\bibitem{Kaleta_Lorinczi_Kwasnicki}
K.~Kaleta, M.~Kwa\'{s}nicki, and J.~L\H{o}rinczi.
\newblock Contractivity and ground state domination properties for non-local
  {S}chr\"{o}dinger operators.
\newblock {\em J. Spectr. Theory}, 8(1):165--189, 2018.

\bibitem{Kaleta_Lorinczi_1}
K.~Kaleta and J.~L\H{o}rinczi.
\newblock Pointwise eigenfunction estimates and intrinsic
  ultracontractivity-type properties of {F}eynman-{K}ac semigroups for a class
  of {L}\'{e}vy processes.
\newblock {\em Ann. Probab.}, 43(3):1350--1398, 2015.

\bibitem{Kaleta_Lorinczi_3}
K.~Kaleta and J.~L\H{o}rinczi.
\newblock Zero-energy bound state decay for non-local {S}chr\"{o}dinger
  operators.
\newblock {\em Comm. Math. Phys.}, 374(3):2151--2191, 2020.

\bibitem{Kaleta_Schilling}
K.~Kaleta and R.~L. Schilling.
\newblock Progressive intrinsic ultracontractivity and heat kernel estimates
  for non-local {S}chr\"{o}dinger operators.
\newblock {\em J. Funct. Anal.}, 279(6):108606, 69, 2020.

\bibitem{Keller_Lenz}
M.~Keller and D.~Lenz.
\newblock Dirichlet forms and stochastic completeness of graphs and subgraphs.
\newblock {\em J. Reine Angew. Math.}, 666:189--223, 2012.

\bibitem{keller_lenz_wojciechowski_2021}
M.~Keller, D.~Lenz, and R.~Wojciechowski.
\newblock {\em Graphs and Discrete Dirichlet Spaces}, volume 358 of {\em
  Grundlehren der Mathematischen Wissenschaften [Fundamental Principles of
  Mathematical Sciences]}.
\newblock Springer International Publishing, 2021.

\bibitem{Kim_Song_Vondracek}
P.~Kim, R.~Song, and Z.~Vondra\v{c}ek.
\newblock Scale invariant boundary {H}arnack principle at infinity for {F}eller
  processes.
\newblock {\em Potential Anal.}, 47(3):337--367, 2017.

\bibitem{Kluppelberg}
C.~Kl\"{u}ppelberg.
\newblock Asymptotic ordering of distribution functions.
\newblock {\em Semigroup Forum}, 40(1):77--92, 1990.

\bibitem{Kulczycki}
T.~Kulczycki.
\newblock Gradient estimates of {$q$}-harmonic functions of fractional
  {S}chr\"{o}dinger operator.
\newblock {\em Potential Anal.}, 39(1):69--98, 2013.

\bibitem{Kulczycki_Siudeja}
T.~Kulczycki and B.~Siudeja.
\newblock Intrinsic ultracontractivity of the {F}eynman-{K}ac semigroup for
  relativistic stable processes.
\newblock {\em Trans. Amer. Math. Soc.}, 358(11):5025--5057, 2006.

\bibitem{Kumagai}
T.~Kumagai.
\newblock {\em Random walks on disordered media and their scaling limits},
  volume 2101 of {\em Lecture Notes in Mathematics}.
\newblock Springer, Cham, 2014.
\newblock Lecture notes from the 40th Probability Summer School held in
  Saint-Flour, 2010, \'{E}cole d'\'{E}t\'{e} de Probabilit\'{e}s de
  Saint-Flour. [Saint-Flour Probability Summer School].

\bibitem{Kwasnicki}
M.~Kwa\'{s}nicki.
\newblock Intrinsic ultracontractivity for stable semigroups on unbounded open
  sets.
\newblock {\em Potential Anal.}, 31(1):57--77, 2009.

\bibitem{osc2}
Z.-F. Li, J.-J. Liu, W.~Lucha, W.-G. Ma, and F.~F. Schöberl.
\newblock Relativistic harmonic oscillator.
\newblock {\em Journal of Mathematical Physics}, 46(10):103514, 2005.

\bibitem{Mattis}
D.~C. Mattis.
\newblock The few-body problem on a lattice.
\newblock {\em Rev. Mod. Phys.}, 58:361--379, Apr 1986.

\bibitem{Mohazzabi}
P.~Mohazzabi.
\newblock Theory and examples of intrinsically nonlinear oscillators.
\newblock {\em American Journal of Physics}, 72(4):492--498, 2004.

\bibitem{Murugan_Saloff_2}
M.~Murugan and L.~Saloff-Coste.
\newblock Transition probability estimates for long range random walks.
\newblock {\em New York J. Math.}, 21:723--757, 2015.

\bibitem{Murugan_Saloff-Coste}
M.~Murugan and L.~Saloff-Coste.
\newblock Heat kernel estimates for anomalous heavy-tailed random walks.
\newblock {\em Ann. Inst. Henri Poincar\'{e} Probab. Stat.}, 55(2):697--719,
  2019.

\bibitem{Nagaev}
S.~V. Nagaev.
\newblock Renewal theorems in the case of attraction to the stable law with
  characteristic exponent smaller than unity.
\newblock {\em Ann. Math. Inform.}, 39:173--191, 2012.

\bibitem{Schaefer}
H.~H. Schaefer.
\newblock {\em Banach lattices and positive operators}.
\newblock Springer-Verlag, New York-Heidelberg, 1974.
\newblock Die Grundlehren der mathematischen Wissenschaften, Band 215.

\bibitem{Schilling_book}
R.~L. Schilling, R.~Song, and Z.~Vondra\v{c}ek.
\newblock {\em Bernstein functions}, volume~37 of {\em De Gruyter Studies in
  Mathematics}.
\newblock Walter de Gruyter \& Co., Berlin, second edition, 2012.
\newblock Theory and applications.

\bibitem{Takeda}
M.~Takeda.
\newblock {$L^p$}-independence of growth bounds of {F}eynman-{K}ac semigroups.
\newblock In {\em Surveys in stochastic processes}, EMS Ser. Congr. Rep., pages
  201--226. Eur. Math. Soc., Z\"{u}rich, 2011.

\bibitem{Wang}
J.~Wang.
\newblock On-diagonal heat kernel estimates for {S}chr\"{o}dinger semigroups
  and their application.
\newblock {\em Commun. Math. Stat.}, 6(4):493--508, 2018.

\bibitem{Wendel}
J.~G. Wendel.
\newblock Note on the gamma function.
\newblock {\em Amer. Math. Monthly}, 55:563--564, 1948.

\bibitem{Woyczynski}
W.~A. Woyczy\'{n}ski.
\newblock L\'{e}vy processes in the physical sciences.
\newblock In {\em L\'{e}vy processes}, pages 241--266. Birkh\"{a}user Boston,
  Boston, MA, 2001.

\end{thebibliography}

\end{document}